\newtheorem{thm}{Theorem}[section]
\newtheorem{prop}{Proposition}[section]
\newtheorem{cor}{Corollary}[section]
\newtheorem{lem}{Lemma}[section]
\newtheorem{de}{Definition}[section]
\newtheorem{rem}{Remark}[section]
\newenvironment{proof}{
                      \noindent{Proof.}}
                                      {\hfill {$\mathbf \Box$}\medskip}
\newcommand{\got}[1]{\mathfrak{#1}}
\newcommand{\K}{\mathbb{K}}
\newcommand{\N}{\mathbb{N}}
\newcommand{\C}{\mathbb{C}}
\newcommand{\Q}{\mathbb{Q}}
\newcommand{\HH}{\mathrm{H}}
\newcommand{\Hom}{\mathrm{Hom}}
\newcommand{\Def}{\mathrm{Def}}
\newcommand{\J}{\mathrm{J}}
\newcommand{\I}{\mathrm{I}}
\newcommand{\Der}{\mathrm{Der}}
\newcommand{\D}{\mathrm{D}}
\newcommand{\m}{\mathfrak{m}}
\newcommand{\n}{\mathfrak{n}}
\newcommand{\Gl}{\mathrm{Gl}}
\newcommand{\G}{\mathrm{G}}
\newcommand{\M}{\mathrm{M}}
\newcommand{\Aut}{\mathrm{Aut}}
\newcommand{\gl}{\mathfrak{gl}}
\newcommand{\g}{\mathfrak{g}}
\newcommand{\V}{\mathrm{V}}
\newcommand{\A}{\mathrm{A}}
\renewcommand{\L}{\got{L}}
\renewcommand{\O}{\mathcal{O}}
\title{Versal Deformations and Versality in Central Extensions of Jacobi's Schemes}
\author{Roger Carles\\UMR 6086 du CNRS, Laboratoire de Math\'{e}matiques et Applications \\Universit\'e de Poitiers, F-8692 Futuroscope Chasseneuil France\\{carles@math.univ-poitiers.fr}
\\ Toukaiddine Petit \footnote{\tt Supported by the EC project Liegrits MCRTN 505078.}
\\Departement Wiskunde en Informatica, Universiteit Antwerpen\\
B-2020 (Belgium)\\{toukaiddine.petit@ua.ac.be}.
}    
\date{}
\begin{document}
\maketitle
\textbf{Abstract.} Let $\L_m$ be the scheme of the laws defined by the Jacobi's identities on $\K^m$ with $\K$ a field. A deformation of $\g\in\L_m$, parametrized by a local $\K$-algebra $\A$, is a local $\K$-algebra morphism from the local ring of $\L_m$ at $\phi_m$ to $\A$. The problem to classify all the deformation equivalence classes of a Lie algebra with given base is solved by "versal" deformations. First, we give an algorithm for computing versal deformations. 
Second, we prove there is a bijection between the deformation equivalence classes of an algebraic Lie algebra $\phi_m=\mathrm{R}\ltimes\varphi_n$ in $\L_m$ and its nilpotent radical $\varphi_n$ in the $\mathrm{R}$-invariant scheme $\L_n^{\mathrm{R}}$ with reductive part $\mathrm{R}$, under some conditions. So the versal deformations of $\phi_m$ in $\L_m$ is deduced to those of $\varphi_n$ in $\L_n^{\mathrm{R}}$, which is a more simple problem. Third, we study versality in central extensions of Lie algebras. 
Finally, we calculate versal deformations of some Lie algebras.
\section*{Introduction}
Formal deformation theory of associative algebras and Lie algebras over an algebraically closed field $\K$ of characteristic zero has been first studied by Gerstenhaber \cite{G}, Nijenhuis and Richardson \cite{NR}. For more information about this topic see \cite{B,C9,G,Ko,NR}. Schlessinger generalized deformation theory from the base $\K[[t]]$ to a commutative local $\K$-algebra, \cite{S}. A Lie algebra of dimension $m$ is viewed as a point $\phi_0$ of the scheme $\L_m$ defined by the Jacobi's identities and antisymmetry. Prolonging these ideas, we consider in \cite{C3}, a deformation of $\phi_0$, parametrized by a local $\K$-algebra $\A$, with maximal ideal $\m$, as a local $\K$-algebra morphism $\O\rightarrow\A$, with $\O$ the local ring of $\L_m$ at $\phi_0$. There is a subgroup $\G_m(\A)$ of the linear group $\Gl_m(\A)$ acting on the set of deformations $\Def(\phi_0,\A)$. An interesting and largely open problem is to classify all the deformation equivalence classes with fixed base of a Lie algebra. Considering deformations with any base permits us to simplify this problem. In fact, this is equivalent to look for deformations called versal $g:\O\rightarrow\mathrm{R}$ with base $\mathrm{R}$ satisfying that for  
any deformation $h:\O\rightarrow\mathrm{A}$, with base $\A$, there is a local morphism $\overline{h}:\mathrm{R}\rightarrow\mathrm{A}$ (a base change) such that $\overline{h}\circ g$ is equivalent to $h$ and the Zariski's tangent space of $\mathrm{R}$ is minimal for this property.

The aim of this paper is to provide an algorithm for computing versal deformations, a theorem called reduction theorem which reduces their computation for a class of algebraic Lie algebras, and to study versality in central extensions of subschemes of $\L_m.$

In Section $1$, we give classical material of deformations in this new definition where the properties depend on the $\K$-algebra $\A$. We introduce the schemes $\L_n^\mathrm{D}$ consisting laws which are invariant under $\D\subset\gl_n(\K)$, and $\Def(\varphi_0,\A)^{\mathrm{D}}$ the set of deformations of $\varphi_0\in\L_n^\mathrm{D}(\K)$ defined by the local ring $\O^{\mathrm{D}}$ of $\L_n^{\D}$ at $\varphi_0$. We assume that $\mathrm{D}$ is completely reducible in $\gl_n(\K)$. The subgroup $\G_n(\A)^{\D}$ of $\G_n(\A)$ consisting elements which commute with $\mathrm{D}$ acts on $\Def(\varphi_0,\A)^{\mathrm{D}}$. 

In Section $2$, we provide an algorithm for computing versal deformations of $\phi_0$. This method is based on the fixing some parameters indexed by a set $\mathcal{A}$ called admissible set at $\phi_0$. The bases of versal deformations of $\phi_0$ are the local rings of slices  $\L_{m,\phi_0}^{\mathcal{A}}$ which are transversal subschemes to the orbit of $\phi_0$ in $\L_m(\K)$ under $\Gl_m(\K)$. 

In section $3$, we give the reduction theorem : Let $\g=\mathrm{R}\ltimes\n$ be an algebraic Lie algebra with reductive part $\mathrm{R}$ and nilpotent radical $\n$ of dimension $n$. For any Noetherian complete local $\K$-algebra $\A$, there is a bijection between $\Def(\g,\A)/\G_m(\A)$ and $\Def(\n,\A)^{\mathrm{R}}/\G_n(\A)^{\mathrm{R}}$; and the local ring of any slice of $\L_m$ at $\g$ is isomorphic to that of any slice of $\L_n^\mathrm{D}$ at $\n$, for an important class of algebraic Lie algebras. So the versal deformations of $\g$ in $\L_m$ is deduced to those of $\n$ in $\L_n^{\mathrm{R}}$, which is a more simple problem. Also we provide a new criterion of formal rigidity of $\phi_0$ saying that if the Krull's dimension of the completion local ring  of any slice at $\phi_0$ is zero, then $\phi_0$ is formal rigid. 

Under some hypotheses, we could limit the local study of $\g=\mathrm{R}\ltimes\n$ in $\L_m$ to that of $\n$ in $\L^\mathrm{R}_n$ according to the reduction theorem. Moreover, if the weights of the center $\mathrm{T}$ of $\mathrm{R}$ were different from zero, the scheme $\L_n^{\mathrm{R}}$ would be consisted of nilpotent laws. Passing from $\L_m$ to $\L^\mathrm{R}_n$ has the advantage to use methods which are specific for nilpotent Lie algebras. In particular, we can use the construction method of nilpotent Lie algebras based on successive central extensions.
Then we build in section $4$ some sequences of schemes $(\L_{n}^{\mathrm{T}})_n$ for $n\geq n_0$ consisting of nilpotent laws whose  
their versal deformations are made  by successive central extensions from that of $\varphi_{n_0}\in\L_{n_0}^{\mathrm{T}}$, with $n_0$ the dimension where the maximal torus $\mathrm{T}$ appears. This leads to a continuous
family concept. 

In Section $5$, we calculate versal deformations of some Lie algebras.		
\section{Generality}
$\textbf{1.1}$ In this work, we consider an algebraically closed field $\K$ of characteristic $0$. 
Let $(e_i)$ be a basis of $\V:=\K^m$, $\A$ a commutative associative $\K$-algebra with unity $1=1_\A$, and $\L_m\left(\A\right)$ the set of all Lie $\A$-algebra multiplications on $\A\otimes_\K\V$. An element $\phi$ of $\L_m\left(\A\right)$ is defined by its structure constants $\phi^k_{ij}\in\A$: $\phi\left(e_i,e_j\right)=\sum_{k=1}^{m}\phi^k_{ij}e_k$. Then the set $\L_m\left(\A\right)$ can be identified with the set of $(\phi^k_{ij})\in\A^{\frac{m(m+1)}{2}}$\quad such that
\begin{equation}\label{n1}
\phi^k_{ij}+\phi^k_{ji}=0,\quad J_{ijk}^l=\sum_{s=1}^{p}\left(\phi^s_{ij}\cdot\phi^l_{sk}+\phi^s_{jk}\cdot\phi^l_{si}+\phi^s_{ki}\cdot \phi^l_{sj}\right)=0.
\end{equation}
It is well-known that $\L_m$ is a functor from the category of commutative associative $\K$-algebras to the category of sets. Denote by $\J_m$ the ideal of the polynomial ring $\mathrm{P}_m:=\K\left[X^\alpha:\alpha\in\mathcal{I}\right]$ generated by the relations (\ref{n1}), and $\I_m$ the quotient algebra $\I_m:=\mathrm{P}_m/\J_m$, where $\mathcal{I}=\left\{(^k_{ij}):1\leq i<j\leq m,1\leq k\leq m\right\}$ is the set of all multi-indices. For any $\K$-algebras $\A$, $\mathrm{B}$, let us denote by $\mathrm{Hom}_{\mathrm{Alg}}(\A,\mathrm{B})$ the set of $\K$ algebra homomorphisms from $\A$ to $\mathrm{B}$. Let $x^\alpha$ be the class of $X^\alpha$ in $\I_m$. The $\K$-functor $\L_m$ is representable by $\I_m$ i.e there is a canonical bijection  $\L_m(\A)\rightarrow\mathrm{Hom}_{\mathrm{Alg}}(\I_m,\A),\,\phi\rightarrow f_\phi$ defined by $f_\phi:\I_m\rightarrow\A$ with $f_\phi(x^\alpha)=\phi^\alpha$, for all commutative associative $\A$. Giving a point $\phi\in\L_m(\A)$ is equivalent to giving an algebra morphism from $\I_m$ to $\A$. The $\K$-functor $\L_m$ is an affine algebraic $\K$-scheme with algebra $\I_m$, and we have $\L_m\simeq\mathrm{Spec}\left(\I_m\right)$, \cite{R}.  The set of rational points of $\L_m$ is identified with the set $\L_m\left(\K\right)$ of all Lie $\K$-algebra multiplications on $\V$. \\
$\textbf{1.2}$ Let $\A$ be a local $\K$-algebra with maximal ideal $\m=\m\left(\A\right)$, residue field $\K=\A/\m$, augmentation map $\mathrm{pr}:\A\rightarrow\K$, and let $\widehat{\A}$ denote its  $\m$-adic completion $\K$-algebra. \\
A deformation of a point $\phi_0$ of $\L_m\left(\K\right)$ with base $\A$ is a point $\phi$ of $\L_m\left(\A\right)$ such that $\phi_0=\L_m\left(\mathrm{pr}\right)\left(\phi\right)$. For all fixed point $\phi_0$, we obtain a functor $\Def\left(\phi_0,-\right)$ from the category of commutative associative local $\K$-algebras to the category of the sets. We extend $\Def\left(\phi_0,-\right)$ to $\widehat{\A}$ by taking inverse limits. \\If $\O=\O_{\phi_0}$ is the local ring of the scheme $\L_m$ at $\phi_0$, then it is equal to the localization ring of $\I_m$ by the maximal ideal $\m_0:=\mathrm{Ker}(f_0)$, with $f_0:=f_{\phi_0}$.\\
Giving a deformation $\phi$ of $\phi_0$ with base $\A$ is equivalent to giving a $\K$-algebra morphism, $f=f_{\phi}:\I_m\rightarrow\A$ such that $f_0=\mathrm{pr}\circ f$. This is equivalent to $f$ sends $\m_0$ to $\m$. Then $f$ induces a local morphism from $\O$ to $A$. We deduce that giving a deformation $\phi$ of $\phi_0$, is equivalent to give a local $\K$-algebra morphism $f:\O\rightarrow\A$. Then the set of deformation $\mathrm{Def}(\phi_0,\A)$ can be identified with the set of local $\K$-algebra homomorphisms $\mathrm{Hom}_{\mathrm{alg}}(\O,\A)$. The case where $\A=\K[[t]]$ corresponds to the Gerstenhaber's deformations  \cite{G}.\\
The deformation $\mathrm{id}:\O\rightarrow\O$ defined by $\mathrm{id}(x^\alpha)=x^\alpha$, is called deformation identity or canonical deformation. It is an initial universal object in the category of deformations at the point $\phi_0$. Any deformation of $\phi_0$ with  base $\A$ may be deduced from the identity deformation by a base change.
A deformation $\phi\in\Def\left(\phi_0,\A\right)$ is defined by its structure constants, $\phi^\alpha=f\left(x^\alpha\right)$. The smallest subalgebra of $\A$ containing them is equal to $f\left(\O\right)$. It is a Noetherian local $\K$-algebra since it is isomorphic to 
$\O/\ker\left(f\right)$. Next we can assume that $\A$ is Noetherian. Let $\got{R}$ denote the category of Noetherian local $\K$-algebras, $\widehat{\got{R}}$ that of Noetherian complete local $\K$-algebras.\\
$\textbf{1.3}$ Let $f\in\Hom_{\mathrm{alg}}\left(\O,\A\right)$ be a deformation with $\A\in\widehat{\got{R}}$, then, for each $p\geq 0$, its reduction modulo $\m^{p+1}$ is a deformation with base $\A/\m^{p+1}$. We obtain a map
\begin{equation}\label{e14}
\Hom_{\mathrm{alg}}\left(\O,\A\right)\quad\longrightarrow\quad\underleftarrow{\mathrm{lim}_p}\Hom_{\mathrm{alg}}\left(\O,\A/\m^{p+1}\right)
\end{equation}
It follows from the bijection (\ref{e14}) that for each element $f$ of $\Hom_{\mathrm{loc}}\left(\O,\A\right)$, there is a family $\left(f_p\right)_{p\in\N}$ with $f_p\in\Hom_{\mathrm{loc}}(\O,\A/\m^{p+1})$ such that $f=\underleftarrow{\mathrm{lim}}\left(f_p\right)$ when $p\rightarrow\infty$. The statement that $f$ forms a deformation is equivalent to lift each local morphism $f_p$ to the local morphism $f_{p+1}$. This is equivalent to the commutativity of the following diagram:
\begin{equation}
	\xymatrix{
	\O
	\ar@{->}[r]^{f_{p+1}} \ar@{->}[dr]^{f_{p}}
	& \A/\m^{p+2}
		\ar@{->}[d]^{\pi}\\
	&\A/\m^{p+1}}
	\end{equation}
where $\pi$ is the projection. We say that there is a $(p+1)$-obstruction if it is impossible to find such a solution.\\
$\textbf{1.4}$ Consider the Chevalley-Eilenberg's complex $(\mathrm{C}\left(\g,\g\right),d)$ with adjoint values of a Lie algebra $\g$. The Nijenhuis-Richardson's bracket of a $m$-cochain $f$ by a $q$-cochain $g$ is the commutator $\left[f,g\right]=f\bullet g-(-1)^{(m-1)(q-1)}g\bullet f.$
It may be checked that $d f=(-1)^{m+1}\left[\phi_0,f\right]$.
Denote by $\mathrm{Z}^m\left(\g,\g\right)$, $\mathrm{B}^m\left(\g,\g\right)$ and $\mathrm{H}^m\left(\g,\g\right)$, the space of $m$-cocycles, $m$-coboundaries, and $m$-cohomologies respectively, \cite{NR}.\\
$\textbf{1.5}$ Let $\A\in\widehat{\got{R}}$. If $r$ is the dimension of $\m/\m^2$ over $\K$, we then have $\A=\K\left[\left[T_1,...,T_r\right]\right]/\mathfrak{a}$ with $\mathfrak{a}$ an ideal. 
If $f\in\mathrm{Hom}_{\mathrm{alg}}(\O,\A)$ is a deformation, its structure constants $f\left(x^\alpha\right)=\phi^\alpha$ may be written as formal series in generators $t_1,...,t_r$ of $\m\left(\A\right)$, say $\phi^\alpha=\phi^\alpha\left(t_1,...,t_r\right),$ 
with $(\phi_0)^\alpha=\phi^\alpha\left(0,...,0\right).$ 
This deformation may be written as $(\phi^\alpha)=\phi\left(t\right)=\sum_{\mu\in\N^r}t^\mu\phi_\mu,$ with $\phi_\mu\in\mathrm{C}^2\left(\g,\g\right)$, and $t^\mu=t_1^{\mu_1}...t_r^{\mu_r}$. This writing is not unique if $\mathfrak{a}\neq 0$.\\
$\textbf{1.6}$ Let $\A$ be a local $\K$-algebra and $\Gl_{m}(\A)$ the group of invertible $m$-matrices with coefficient in $\A$.
The augmentation map $\mathrm{pr}:\A\rightarrow\K$ induces a group $\K$-morphism denoted again by ${\mathrm{pr}}$, ${\mathrm{pr}}:\Gl_{m}\left(\A\right)\rightarrow\Gl_{m}\left(\K\right)$.
Its kernel denoted by $\G_{m}\left(\A\right)$ is a subgroup of $\Gl_{m}\left(\A\right)$ equal to $\mathrm{id}+\M_{m}\left(\m\right)$, where $\M_{m}\left(\m\right)$ is the set of matrices with coefficients in $\m$. The group $\Gl_{m}\left(\A\right)$ canonically acts on $\L_{m}\left(\A\right)$ by
\begin{equation}
	s\ast\phi=s\circ\phi\circ(s^{-1}\times s^{-1})
\end{equation}
i.e.
\begin{equation}\label{A}
	(s\ast\phi)^k_{ij}=\sum_{lpq} (s^{-1})_{li}\,(s^{-1})_{pj}\,s_{kq}\,\phi^q_{lp}
\end{equation}
with $s=(s_{ij})\in\Gl_{m}\left(\A\right)$ and $\phi\in\L_{m}\left(\A\right)$. Denote by $\left[\phi\right]$ the $\Gl_{m}\left(\A\right)$-orbit of $\phi$. The action of the group $\G_m(\A)$ on $\mathrm{Hom}_{\mathrm{alg}}(\O,\A)$  is given by 
\begin{equation}\label{B}
	s\ast f_{\phi}:=f_{s\ast\phi}:\O\rightarrow\A, x^k_{ij}\longmapsto (s\ast\phi)^k_{ij}
\end{equation}
with $s\in\G_{m}\left(\A\right)$ and $f_{\phi}\in\mathrm{Hom}_{\mathrm{alg}}(\O,\A)$.\\
Two deformations of $\phi_0$ with base $\A$ are said to be equivalent if they lie on the same orbit under $\G_{m}\left(\A\right)$. Denote by $\overline{\Def}\left(\phi_0,\A\right)$ the set of deformation equivalence classes of $\phi_0$ with base $\A$.\\
A deformation is said to be $\A$-trivial if it is equivalent to the constant deformation $c:\O\rightarrow\A$ defined by $c\left(u\right):=f_{0}\left(u\right)\cdot 1_{\A}$, where $u\in\O$. A point $\phi_0$ of $\L_{m}\left(\K\right)$ is said to be $\A$-rigid if each deformation of $\phi_0$ is $\A$-trivial i.e. $\Def\left(\phi_0,\A\right)$ consists of only one orbit under $\G_{m}\left(\A\right)$.
The notion of $\A$-rigidity depends on the base $\A$. The geometric rigidity which means that the orbit of $\phi_0$ under $\Gl_{m}\left(\K\right)$ is a Zariski's open set in $\L_{m}\left(\K\right)$, corresponds to the rigidity with base $\A:=\K[[t]]$, \cite{NR}. 
The Zariski's tangent space to $\L_m(\K)$ (resp. the $\Gl_m(\K)$-orbit $\left[\phi_0\right]$ of $\phi_0$) at $\phi_0$ identifies with $\mathrm{Z}^2(\phi_0,\phi_0)$ (resp. $\mathrm{B}^2(\phi_0,\phi_0)$). We have
$\mathrm{H}^2(\phi_0,\phi_0)=0$ if and only if the $\Gl_m(\K)$-orbit of $\phi_0$ is a Zariski's open set in $\L_{m}\left(\K\right)$, and the scheme $\L_{m}$ is reduced at point $\phi_0$.\\
We suppose that $\A$ is complete, then each element $s$ of $\G_{m}\left(\A\right)$ may be written as $\sum_{\mu\in\N^r}t^\mu s_\mu$, with $s_0=\mathrm{id}$ and $s_\mu\in\M_{m}\left(\K\right)$. Let $k$ be the smallest length nonzero of $\mu$ such that $\sum_{|\mu|=k}t^\mu s_\mu$, is nonzero. The action of $s$ on a deformation $\phi=\sum_{\mu\in\N^r}t^\mu\phi_\mu$ of $\phi_0$ with base $\A$, is given by
\begin{equation}\label{e4}
	s\ast\phi\equiv\sum_{|\mu|<k}t^\mu\phi_\mu+\sum_{|\mu|=k}t^\mu\left(\phi_\mu-d_{\phi_0} s_\mu\right)\mod\m^{k+1}
\end{equation}
$\textbf{1.7}$ Let $\D$ be a subset of $\mathrm{M}_m(\K)$. Let $\L_m^{\D}\left(\K\right)$ denote the set of laws $\phi$ of $\L_m\left(\K\right)$ which admit $\D$ as derivations i.e. $\delta\cdot\phi=0$, for all $\delta=(\delta^i_j)\in\D$.  
Let $\Delta_m$ be the ideal of $\mathrm{P}_m$ generated by the polynomials
\begin{equation}\label{E1.14}
\sum^p_{l=1}\left(X^l_{ij}\delta^k_l-\delta^l_iX^k_{lj}-\delta^l_jX^k_{il}\right),\left(i,j,k\in\left\{1,...,m\right\},i<j\right)
\end{equation}
for all $\delta\in\D$. Denote by $\J^{\D}_m$ the sum of $\J_m$ and $\Delta_m$, and $\I_m^{\D}$ the quotient algebra $\mathrm{P}_m/\J_m^{\D}$. 
Let $\L_m^{\D}$ denote the subscheme of $\L_m$ which is canonically isomorphic to $\mathrm{Spec}\left(\I_m^{\D}\right)$. Let $\phi_0$ be a point of $\L_m^{\D}(\K)$, $\A$ a local $\K$-algebra and let $\Def^{\D}(\phi_0,A)$ denote the set of deformations of $\phi_0$ with base $\A$ in $\L_m^{\D}(\A)$.
The quotient algebra $\K$-morphism from $\I_m$ to $\I_{m}^{\D}$ induces a local algebra $\K$-morphism $\eta:\O_{\phi_0}\rightarrow\O_{\phi_0}^{\D}$ and a  scheme embedding from $\L_{m}^{\D}$ onto $\L_m$, denoted by $\mathfrak{I}$.
Also it induces a map denoted again by $\mathfrak{I}$ and defined by
$$\Hom_{\mathrm{Alg}}(\O_{\phi_0}^\D,\A)\longrightarrow\Hom_{\mathrm{Alg}}(\O_{\phi_0},\A),\quad f\longmapsto \mathfrak{I}(f)=f\circ\eta.$$
If $\left[\phi_0\right]$ is an orbit of a law $\phi_0$ in $\L_m\left(\K\right)$, we denote by $\left[\phi_0\right]^{\D}=\left[\phi_0\right]\cap\L_{m}^{\D}\left(\K\right)$ its trace in $\L_{m}^{\D}\left(\K\right)$. 
Assume that $\D$ is a completely reducible Lie subalgebra of $\mathrm{gl}(\K^n)$, i.e. its natural representation on $\V$ is semisimple. The normalizer subgroup $\HH$ of $\D$ in $\Gl\left(\K^n\right)$ stabilizes $\L_m^\D\left(\K\right)$ and the trace orbits. Since the adjoint representation of $\D$ is semisimple, hence the Lie algebra of $\mathrm{H}$ is given by $\D+\mathrm{gl}(\K^n)^{\mathrm{D}}$. 
If $\mathrm{H}_0$ is the identity component of $\mathrm{H}$, it is proved in \cite{C7} that \\
	$\textbf{a)}$ the Zariski's tangent space to $\L_{m}^{\D}\left(\K\right)$ at $\phi_0$ is equal to $\mathrm{Z}^2\left(\g,\g\right)^\D$.\\
	$\textbf{b)}$ $[\phi_0]^{\D}$ is a finite union of orbits under $\mathrm{H}_0$ in $\L_m^\D(\K)$.\\
	$\textbf{c)}$ The Zariski's tangent space to $[\phi_0]^\D$ at $\phi_0$, is equal to that of its orbit under $\mathrm{H}_0$ at $\phi_0$. It is also equal to $\mathrm{B}^2\left(\phi_0,\phi_0\right)^{\D}$.\\
	$\textbf{d)}$ The orbits under $\mathrm{H}_0$ in $\L_m^\D(\K)$ are the same that under the identity component $\Gl\left(\K^n\right)^{\D}_0$ of the group  $\Gl\left(\K^n\right)^{\D}$ consisting of the elements of $\Gl\left(\K^n\right)$ commuting with $\D$.\\
	$\textbf{e)}$ For $\K=\C$, we have $\mathrm{H}^2\left(\phi_0,\phi_0\right)^{\D}=0$ if and only if the $\mathrm{H}_0$-orbit of $\phi_0$ is a Zariski's open set in $\L_{m}^{\D}\left(\K\right)$ and the scheme $\L_{m}^{\D}$ is reduced at $\phi_0$.\\
$\textbf{1.8}$ 
Let $\mathcal{A}$ be a subset of $\mathcal{I}$ and let $\phi_0$ be a point of $\L_m\left(\K\right)$. Set 
\begin{equation}\label{E1.15}
	\J^\mathcal{A}_{m,\phi_0}:=\J_m+\left\langle  X^{\alpha}-(\phi_0)^{\alpha}:\alpha\in\mathcal{A}\right\rangle,\ \I^\mathcal{A}_{m,\phi_0}:=\mathrm{P}_m/\J^\mathcal{A}_{m,\phi_0},
\end{equation}
we define a subscheme $\L^\mathcal{A}_{m,\phi_0}$ of $\L_m$ as 
\begin{equation}\label{e1.8}
\L^\mathcal{A}_{m,\phi_0}(\A):=\left\{\phi\in\L_m(\A)|\phi^\alpha=(\phi_0)^\alpha,\alpha\in\mathcal{A}\right\}
\end{equation}
$\mathrm{\textbf{Semi-direct product}}$. Let $n\in\N$ such that $n\leq m$. Set 
\begin{equation}\label{E1.17}
\mathcal{I}'=\left\{\left(^k_{ij}\right)\in\mathcal{I}:i,j,k\leq n\right\}, \mathrm{P'}_n=\K[X^\alpha:\alpha\in\mathcal{I'}]
\end{equation}
We assume that $\phi_0$ is the law of a semi-direct product Lie algebra $\g$ with a reductive part $\mathrm{R}=\left\langle e_{n+1},..,e_m\right\rangle$ and the nilpotent part $\mathfrak{n}=\left\langle e_1,...,e_n\right\rangle$. The adjoint action of $\mathrm{R}$ on $\n$ defines derivations $\delta_i$ of the restriction $\varphi_0$ of $\phi_0$ on $\n$ by
\begin{equation}\label{E1.18}
	\delta_i e_j=\phi_0\left(e_{n+i},e_j\right)=\sum_{k=1}^nC^k_{n+i,j} e_k,\,i\leq r,j\leq n\,\,\mathrm{if}\,r=m-n.
\end{equation}
The elements $(\delta_i)_{i\leq r}$ generate the reductive Lie algebra 
$\mathrm{D}=\mathrm{ad}_{\n}\mathrm{R}$.
\begin{prop}\label{m12} If $	\L^{\mathcal{I}-\mathcal{I}'}_{m,\phi_0}$ is the subscheme of $\L_m$ defined by $\mathcal{I}-\mathcal{I}'$ as above, then there is a scheme isomorphism $\L^{\mathcal{I}-\mathcal{I}'}_{m,\phi_0}\simeq\L^\mathrm{R}_{n}.$
\end{prop}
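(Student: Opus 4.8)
The plan is to establish the isomorphism first at the level of $\A$-points, functorially in the commutative associative $\K$-algebra $\A$, and then to read off the induced isomorphism of representing algebras $\I^{\mathcal{I}-\mathcal{I}'}_{m,\phi_0}\cong\I^{\mathrm{R}}_n$.

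First I would make the constraints defining $\L^{\mathcal{I}-\mathcal{I}'}_{m,\phi_0}(\A)$ explicit. By (\ref{e1.8}) such a $\phi$ is a law on $\A\otimes_\K\V$ with $\phi^\alpha=(\phi_0)^\alpha$ for every $\alpha\in\mathcal{I}-\mathcal{I}'$; writing $\V=\n\oplus\mathrm{R}$ with $\n=\langle e_1,\dots,e_n\rangle$ and $\mathrm{R}=\langle e_{n+1},\dots,e_m\rangle$, the assumption that $\phi_0$ is the law of $\mathrm{R}\ltimes\n$ (so $\mathrm{R}$ a subalgebra, $\n$ an ideal) forces, for all such $\phi$: it agrees with $\phi_0$ on $\mathrm{R}\times\mathrm{R}$ and on $\mathrm{R}\times\n$, the mixed bracket being $\phi(e_{n+i},e_j)=\delta_i e_j$ with $\delta_i$ as in (\ref{E1.18}); moreover $(\phi_0)^k_{ij}=0$ whenever $i,j\le n<k$, whenever $i\le n<j$ and $k>n$, and whenever $i,j>n$ and $k\le n$. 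Hence $\phi$ is entirely determined by the alternating bilinear map $\varphi:=\phi|_{\n\times\n}\colon\n\times\n\to\A\otimes_\K\n$, i.e.\ by the constants $\phi^\alpha$ with $\alpha=(^k_{ij})\in\mathcal{I}'$, which are free subject only to the Jacobi relations $J^l_{ijk}=0$ of (\ref{n1}).

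Next I would sort those relations $J^l_{ijk}$ of $\phi$ according to how many of $i,j,k$ lie in $\mathrm{R}$. When $i,j,k\le n$ one obtains exactly the Jacobi relations for $\varphi$ on $\n$ (the summation index being forced into $\n$ by $(\phi_0)^s_{ij}=0$ for $s>n$, $i,j\le n$). When exactly one index, say $i=n+a$ with $j,k\le n$, lies in $\mathrm{R}$, expanding $J^l_{n+a,j,k}$ with $\phi^s_{n+a,j}=(\delta_a)^s_j$ shows that $J^l_{n+a,j,k}=0$ for all $j,k,l\le n$ is precisely the condition (\ref{E1.14}) that $\delta_a$ be a derivation of $\varphi$. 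When two of the indices lie in $\mathrm{R}$, the relation reduces to $\bigl(\delta_{[e_{n+a},e_{n+b}]}-[\delta_a,\delta_b]\bigr)e_j=0$, which holds identically because $a\mapsto\delta_a$ is a Lie algebra morphism $\mathrm{R}\to\gl(\n)$ — this being part of the hypothesis that $\phi_0\in\L_m(\K)$ — and when all three lie in $\mathrm{R}$ the relation is the Jacobi identity of $\mathrm{R}$, again automatic. Thus $\phi\in\L^{\mathcal{I}-\mathcal{I}'}_{m,\phi_0}(\A)$ if and only if $\varphi\in\L^{\mathrm{D}}_n(\A)$ with $\mathrm{D}=\mathrm{ad}_\n\mathrm{R}$, i.e.\ $\varphi\in\L^{\mathrm{R}}_n(\A)$; the assignment $\phi\mapsto\varphi$ is a bijection $\L^{\mathcal{I}-\mathcal{I}'}_{m,\phi_0}(\A)\to\L^{\mathrm{R}}_n(\A)$, evidently natural in $\A$.

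Finally, to obtain the scheme isomorphism I would translate the last paragraph into coordinate rings. Quotienting $\mathrm{P}_m$ by $\langle X^\alpha-(\phi_0)^\alpha:\alpha\in\mathcal{I}-\mathcal{I}'\rangle$ identifies it with $\mathrm{P}'_n=\K[X^\alpha:\alpha\in\mathcal{I}']$ (which we identify with $\mathrm{P}_n$) via substitution of the fixed constants; under this substitution the images of the Jacobi generators of $\J_m$ are, by the case analysis above, the Jacobi polynomials of (\ref{n1}) on the $n$-indices (generating $\J_n$), the derivation polynomials (\ref{E1.14}) for the $\delta_a$ (generating $\Delta_n$), and $0$ in the remaining cases. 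Hence $\I^{\mathcal{I}-\mathcal{I}'}_{m,\phi_0}=\mathrm{P}_m/\J^{\mathcal{I}-\mathcal{I}'}_{m,\phi_0}\cong\mathrm{P}'_n/(\J_n+\Delta_n)=\I^{\mathrm{D}}_n=\I^{\mathrm{R}}_n$, and applying $\mathrm{Spec}$ yields $\L^{\mathcal{I}-\mathcal{I}'}_{m,\phi_0}\simeq\L^{\mathrm{R}}_n$. I expect the only delicate part to be the bookkeeping in the case analysis of the $J^l_{ijk}$ — specifically, checking that the "one index in $\mathrm{R}$" relations reproduce (\ref{E1.14}) on the nose with the correct signs, and that the relations with two or three indices in $\mathrm{R}$ vanish identically rather than cutting out anything extra; the vanishing of the latter is exactly the point where one uses that in $\phi_0$ the subspace $\mathrm{R}$ is a subalgebra and $\n$ an ideal.
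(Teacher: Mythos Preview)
Your proof is correct and follows essentially the same route as the paper: both arguments reduce to substituting $X^\alpha=(\phi_0)^\alpha$ for $\alpha\in\mathcal{I}-\mathcal{I}'$ into the Jacobi polynomials and then sorting the resulting relations by how many indices lie in $\mathrm{R}$, obtaining the Jacobi ideal of $\mathrm{P}'_n$ from the case $a,b,c\le n$, the derivation ideal $\Delta$ from the case of one index in $\mathrm{R}$, and identically zero relations in the remaining cases. The paper works directly at the level of coordinate rings and is terser (it dispatches the two- and three-indices-in-$\mathrm{R}$ cases with the single phrase ``the other Jacobi's polynomials give null constants''), whereas you add a preliminary functor-of-points description before redoing the same computation on coordinate rings; your extra detail on why the mixed cases vanish (namely that $a\mapsto\delta_a$ is a Lie morphism and that $\mathrm{R}$ satisfies Jacobi) makes explicit what the paper leaves implicit.
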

\begin{proof} The $\K$-algebra $\I^{\mathcal{I}-\mathcal{I}'}_{m,\phi_0}$ which defines the scheme $\L^{\mathcal{I}-\mathcal{I}'}_{m,\phi_0}$ is isomorphic to $\mathrm{P'}_n/\J^\mathcal{I'}_{m,\phi_0}$ where $\J^\mathcal{I'}_{m,\phi_0}$ is the ideal defined by 
$\J_m/\left\langle  X^{\alpha}-(\phi_0)^{\alpha}:\alpha\in\mathcal{I}-\mathcal{I}'\right\rangle\cap\J_m.$
It suffices to set $X^\alpha=(\phi_0)^\alpha$ for all $\alpha\in\mathcal{I}-\mathcal{I}'$ in the Jacobi's polynomials $J^d_{abc}$. For $a,b,c\leq n$ we obtain the Jacobi's polynomials in function of coordinates $(X^\alpha)_{\alpha\in\mathcal{I'}}$ since $\n$ remains a Lie subalgebra. If $a=n+i$, $b$ and $c$ $\leq n$, then the expressions $\delta_i\varphi(e_b,e_c)-\varphi(e_b,\delta_i e_c)-\varphi(\delta_i e_b,e_c)$ generate the ideal $\Delta_m$ defined by (\ref{E1.14}). The other Jacobi's polynomials give null constants. We obtain \\$\I^{\mathcal{I}-\mathcal{I}'}_{m,\phi_0}\cong\mathrm{P'}_n/(\J_m\cap\left\langle X^\alpha, \alpha\in\mathcal{I}'\right\rangle+\Delta_m)=\I_{n,\varphi_0}^{\mathrm{R}}.$
\end{proof}\\
\textbf{Notation}. We shall denote by $\L_n^{\mathrm{R}}$ instead of $\L_n^\mathrm{D}$ for $\mathrm{D}=\mathrm{ad}_\n\mathrm{R}$.
The $\K$-epimorphism algebra from $\I_m$ to $ \I^\mathrm{R}_{n}$ induces a scheme embedding from $\L^\mathrm{R}_{n}$ to $\L_m$, denoted by $\mathfrak{I}$. Let $\A$ be a $\K$-algebra, the morphism 
$\mathfrak{I}:\L^\mathrm{R}_{n}(\A)\longrightarrow\L_m(\A),(\varphi^\alpha)\mapsto(\phi^\alpha)$, defined by
$\phi^\alpha=\varphi^\alpha\quad\mathrm{if}\quad\alpha\in\mathcal{I'}$ and $\phi^\alpha=\left(\phi_0\right)^\alpha\quad\mathrm{if}\quad \alpha\in\mathcal{I}-\mathcal{I}'.$\\
Denote by $\O^{\mathrm{R}}_{\varphi_0}$ the local ring of $	\L^\mathrm{R}_{n}$ at $\varphi_0$, the morphism $\mathfrak{I}$ induces a local epimorphism  $\eta$
\begin{equation}
	\eta:\O_{\phi_0}\rightarrow\O_{\varphi_0}^\mathrm{R}.
\end{equation}
Denote again by $\mathfrak{I}$ the map defined by
$$\Hom_{\mathrm{Alg}}(\O_{\phi_0}^\mathrm{R},\A)\longrightarrow\Hom_{\mathrm{Alg}}(\O_{\phi_0},\A),\quad f\longmapsto \mathfrak{I}(f)=f\circ\eta.$$
\section{Construction of Versal Deformations}
In this section, we provide an algorithm for computing versal deformations of a Lie algebra by fixing some parameters.  The idea to fix some structure constants of a Lie algebra was used in \cite{C9} for parameterizing a deformation. Fuchs and Fialowski provide in \cite{FF} an algorithm of computation the completion of a base of miniversal deformations based on calculations a basis of $\mathrm{H}^i(\g,\g)$ with $i=2,3$ and their Massey products. 
Their method is valid if the dimension of $\mathrm{H}^2(\g,\g)$ is finite, but treats only the miniversal case. 
\subsection{Parametrization of deformations}
Let $\A\in\widehat{\got{R}}$ and a basis $(e_\alpha)_{\alpha\in\mathcal{I}}$ of $\mathrm{C}^2(\V,\V)$. A deformation $\phi=\sum_{\alpha\in\mathcal{I}}\phi^\alpha e_\alpha$ of $\phi_0$ (or $f:\O\rightarrow\A)$ can be written as  $f(x^\alpha)=\phi^{\alpha}=(\phi_0)^\alpha+\xi^\alpha$ with $\xi^\alpha\in\m(\A)$. A family of parameters of $\phi$ or $f$ is a family  $(t_i)_{1\leq i\leq r}$ of the maximal ideal $\m=\m(f(\O))$ such that the classes $\overline{t}_i$ modulo $\m^2$ form a $\K$-basis of $\overline{\m}:=\m/\m^2$. The number of parameters of the deformation is bounded by $\dim_\K\mathrm{Z}^2(\phi_0,\phi_0)$.\\
If $\mathrm{H}^k$ is a complement of $\mathrm{B}^{k}(\g,\g)$ in $\mathrm{Z}^{k}(\g,\g)$ and  $\mathrm{W}^k$ a complement of $\mathrm{Z}^k(\g,\g)$ in $\mathrm{C}^k(\g,\g)$, then $\mathrm{H}^k$ is canonically isomorphic to $\mathrm{H}^k(\g,\g)$, and $d$ induces an isomorphism from $\mathrm{W}^k$ onto $\mathrm{B}^{k+1}(\g,\g)$. Thus we obtain a decomposition
\begin{equation}\label{e3.1}
	\mathrm{C}^k(\g,\g)=\mathrm{Z}^k(\g,\g)\oplus\mathrm{W}^k=\mathrm{B}^k(\g,\g)\oplus\mathrm{H}^k\oplus\mathrm{W}^k,
\end{equation}
called decomposition of Hodge associated with $\g$. The law $\phi=\phi_0+\xi$ is a deformation of $\phi_0$ if and only if $\xi$ is a solution of the Maurer-Cartan's equation
\begin{equation}\label{e10}
\mathrm{MC}_m\left(\m\right)=\left\{\xi\in\m\otimes\mathrm{C}^2\left(\g,\g\right):d\xi-\frac{1}{2}\left[\xi,\xi\right]=0\right\},
\end{equation}
where the differential $d$ extends to $\A\otimes\mathrm{C}\left(\g,\g\right)$ by $\mathrm{id}_{\A}\otimes d$ and denote it again by $d$. We have
\begin{equation}\label{e3.3}
d(\xi)-\frac{1}{2}\left[\xi,\xi\right]=d(\xi)-\frac{1}{2}\left[\xi,\xi\right]_{\mathrm{B}^3}-\frac{1}{2}\left[\xi,\xi\right]_{\mathrm{H}^3}-\frac{1}{2}\left[\xi,\xi\right]_{\mathrm{W}^3},
\end{equation}
where $\left[\xi,\xi\right]_{\mathrm{B}^3},\left[\xi,\xi\right]_{\mathrm{H}^3},\left[\xi,\xi\right]_{\mathrm{W}^3}$ are the projections on $\m\otimes\mathrm{B}^3(\g,\g)$, $\m\otimes\mathrm{H}^3$ and $\m\otimes\mathrm{W}^3$ respectively.
Set 
$$\mathrm{MC}'_m(\m)=\left\{\xi\in\m\otimes\mathrm{C}^2\left(\g,\g\right):d(\xi)-\frac{1}{2}\left[\xi,\xi\right]_{\mathrm{B}^3}=0\right\},$$
we can see $\mathrm{MC}_m(\m)\subset\mathrm{MC}'_m(\m)$. 
From (\ref{e3.1}) we may choose a basis  $(e^k_\alpha)_{\alpha\in\mathcal{I}^k}$ of $\mathrm{C}^k(\g,\g)$ such that the elements $e^k_\alpha$ are indexed by $\mathcal{H}^k$, $\mathcal{B}^k$ and $\mathcal{W}^k$, and form a basis of $\mathrm{H}^k$, $\mathrm{B}^k(\g,\g)$ and $\mathrm{W}^k$ respectively. Denote by $|\mathcal{A}|$ the cardinal of a set $\mathcal{A}$, we have $|\mathcal{W}^k|=|\mathcal{B}^{k+1}|$. Set $p:=|\mathcal{W}^2|=|\mathcal{B}^3|.$
\begin{lem}\label{l2.1}There is a unique map  $g:\m\otimes\mathrm{Z}^2(\g,\g)\rightarrow\m\otimes\mathrm{W}^2(\g,\g)$ with $g(0)=0$ such that $\mathrm{MC}_m'(\m)=\left\{\xi=z+g(z):z\in\m\otimes\mathrm{Z}^2(\g,\g)\right\}$.
\end{lem}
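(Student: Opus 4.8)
The plan is to solve the equation defining $\mathrm{MC}'_m(\m)$ as a fixed‑point problem, handled by $\m$‑adic successive approximation; the one structural input is that $d$ restricts to an isomorphism $\mathrm{W}^2\cong\mathrm{B}^3(\g,\g)$, which is part of the Hodge decomposition (\ref{e3.1}).

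First I would split $\xi\in\m\otimes\mathrm{C}^2(\g,\g)$ according to $\mathrm{C}^2(\g,\g)=\mathrm{Z}^2(\g,\g)\oplus\mathrm{W}^2$, writing $\xi=z+w$ with $z\in\m\otimes\mathrm{Z}^2(\g,\g)$ and $w\in\m\otimes\mathrm{W}^2$. Since $dz=0$ and $d$ carries $\mathrm{W}^2$ isomorphically onto $\mathrm{B}^3(\g,\g)$, one has $d\xi=dw\in\m\otimes\mathrm{B}^3(\g,\g)$, while by bilinearity of the bracket $\frac{1}{2}[\xi,\xi]_{\mathrm{B}^3}\in\m^2\otimes\mathrm{B}^3(\g,\g)$. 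Hence $\xi\in\mathrm{MC}'_m(\m)$ if and only if $dw=\frac{1}{2}[z+w,z+w]_{\mathrm{B}^3}$. Letting $\partial\colon\mathrm{B}^3(\g,\g)\to\mathrm{W}^2$ be the inverse of $d|_{\mathrm{W}^2}$, extended $\A$‑linearly, this is equivalent to the fixed‑point equation $w=F_z(w)$ with $F_z(w):=\frac{1}{2}\,\partial\bigl([z+w,z+w]_{\mathrm{B}^3}\bigr)$.

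Next I would fix $z$ and iterate: $w^{(0)}:=0$, $w^{(k+1)}:=F_z(w^{(k)})$. The key estimate is $w^{(k+1)}-w^{(k)}\in\m^{k+2}\otimes\mathrm{W}^2$, proved by induction: $w^{(1)}=\frac{1}{2}\partial([z,z]_{\mathrm{B}^3})\in\m^2\otimes\mathrm{W}^2$ because $z\in\m\otimes\mathrm{Z}^2(\g,\g)$, and for the step one writes $[z+w^{(k)},z+w^{(k)}]-[z+w^{(k-1)},z+w^{(k-1)}]=[z+w^{(k)},\,w^{(k)}-w^{(k-1)}]+[w^{(k)}-w^{(k-1)},\,z+w^{(k-1)}]$ and uses $z+w^{(j)}\in\m\otimes\mathrm{C}^2(\g,\g)$ together with the inductive hypothesis $w^{(k)}-w^{(k-1)}\in\m^{k+1}\otimes\mathrm{W}^2$. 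Since $\A\in\widehat{\got R}$, the module $\m\otimes_\K\mathrm{W}^2$ is $\m$‑adically complete and separated, so $(w^{(k)})$ converges; set $g(z):=\lim_k w^{(k)}\in\m\otimes\mathrm{W}^2$, which satisfies $g(z)=F_z(g(z))$. The same computation shows $F_z$ raises the filtration level by one: if $w,w'$ both solve $w=F_z(w)$ and $w-w'\in\m^{k+1}\otimes\mathrm{W}^2$, then $w-w'=F_z(w)-F_z(w')\in\m^{k+2}\otimes\mathrm{W}^2$, so by separatedness $w=w'$; thus $g$ is well defined, and taking $z=0$ gives the constant sequence $0$, whence $g(0)=0$.

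Finally, every $\xi\in\mathrm{MC}'_m(\m)$ equals $z+w$ with $z$ its $\mathrm{Z}^2(\g,\g)$‑component, and then $w$ solves $w=F_z(w)$, so $w=g(z)$; conversely $z+g(z)\in\mathrm{MC}'_m(\m)$ for every $z\in\m\otimes\mathrm{Z}^2(\g,\g)$ by construction. This gives $\mathrm{MC}'_m(\m)=\{z+g(z):z\in\m\otimes\mathrm{Z}^2(\g,\g)\}$, and uniqueness of $g$ (with $g(0)=0$) follows from uniqueness of the fixed point for each $z$. The only non‑formal point — the step I expect to need the most care — is the $\m$‑adic bookkeeping: verifying that the quadratic nature of $[\xi,\xi]$ genuinely makes $F_z$ strictly contracting on the $\m$‑adic filtration, so that both convergence of the iteration and uniqueness of its limit are secured, and that completeness of $\A$ is precisely what licenses passing to the limit.
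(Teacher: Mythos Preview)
Your proof is correct and rests on the same two structural inputs as the paper's: that $d$ restricts to an isomorphism $\mathrm{W}^2\to\mathrm{B}^3(\g,\g)$, and that $\A$ is $\m$-adically complete. The difference is one of packaging. The paper writes $F(z,w)=d(w)-\tfrac12[z+w,z+w]_{\mathrm{B}^3}$ as a polynomial map, observes that its Jacobian in the $W$-variables at the origin is the matrix of $d_{\phi_0}\colon\mathrm{W}^2\to\mathrm{B}^3$ (hence invertible), and invokes the formal implicit function theorem to obtain a formal power series $G(Z)$ with $G(0)=0$ and $W=G(Z)$; completeness of $\A$ then allows substitution of elements of $\m$. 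You instead unwind that black box: inverting $d|_{\mathrm{W}^2}$ explicitly to rewrite the equation as a fixed-point problem $w=F_z(w)$ and running Picard iteration with the $\m$-adic contraction estimate. Your route is more self-contained and makes the role of the quadratic nonlinearity transparent; the paper's route has the minor bonus of exhibiting $g$ as a single formal power series in the $z$-coordinates, which is convenient later when one wants to regard $g$ and the obstruction map $\Omega$ as actual polynomial/series expressions rather than pointwise assignments.
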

\begin{proof} Consider a map $F:\m\otimes\mathrm{Z}^2(\g,\g)\times\m\otimes\mathrm{W}^2\rightarrow\m\otimes\mathrm{B}^3(\g,\g)$ defined by $F(z,w)=d(w)-\frac{1}{2}\left[z+w,z+w\right]_{\mathrm{B}^3}$.
Denote by  $(F^\alpha)_{\alpha\in\mathcal{B}^3}$, $(z^\alpha)_{\alpha\in\mathcal{Z}^2}$ and $(w^\alpha)_{\alpha\in\mathcal{W}^2}$ the components of $F$, $z$ and $w$ relative to above bases. Since $F$ is polynomial in variables $z^\alpha$ and $w^\alpha$, we may regard  $F$ as an element of the ring $\A[[Z,W]]^{p}$ of formal power series in variables $Z=(Z^\alpha)_{\alpha\in\mathcal{Z}^2}$ and $W=(W^\alpha)_{\alpha\in\mathcal{W}^2}$. We have $F(0)=0$. The Jacobian $\left[\partial F^\alpha/\partial{W^\beta}(0)\right]$ is invertible in $\A$, since $\mathrm{pr}(\left[\partial F^\alpha/\partial {W^\beta}(0)\right])$ is invertible in $\A/\m=\K$ (i.e. $d_{\phi_0}:\mathrm{W}^2\rightarrow\mathrm{B}^3$ is an isomorphism).
It follows from above and  the formal implicit theorem, the formal equation $F(Z,W)=0$ admits a solution if and only if there is a unique formal map $G=(G^\alpha)_{\alpha\in\mathcal{W}^2}\in\A[[Z]]^{p}$ such that $G(0)=0$ and $G(Z)=W$. It is clear that $G$ converges in the $\m$-adic sense. Since $\A$ is complete, it follows that we define by substituting, a map $g^\alpha:\A^{|\mathcal{Z}^2|}\rightarrow\A^{p}$ by $g^\alpha(a_1,...,a_{|\mathcal{Z}^2|}):=G^\alpha(a_1,...,a_{|\mathcal{Z}^2|})$ which satisfies $g^\alpha(\m^{|\mathcal{Z}^2|})\subset\m^{p}$, for all $\alpha$.
\end{proof}\\
It follows from Lemma \ref{l2.1} that we can define a map
$$\Omega:\m\otimes\mathrm{Z}^2(\g,\g)\rightarrow\m\otimes\mathrm{H}^3,\quad z\mapsto\left[z+g(z),z+g(z)\right]_{\mathrm{H}^3},$$
which is called obstruction map.
\begin{thm}\label{c3.1} There are $g$ and $\Omega$ defined as above
satisfying $g(0)=\Omega(0)=0$ such that 
$\Def(\phi_0,\A)=\left\{\phi_0+z+g(z):\Omega(z)=0,z\in\m\otimes\mathrm{Z}^2(\g,\g)\right\}.$
\end{thm}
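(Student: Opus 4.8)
The plan is to identify $\Def(\phi_0,\A)$ with the solution set of the Maurer--Cartan equation and then to read that equation off the Hodge splitting (\ref{e3.3}). Write $\xi:=\phi-\phi_0\in\m\otimes\mathrm{C}^2(\g,\g)$ for a deformation $\phi$ with base $\A$; by the equivalence recalled just before (\ref{e10}), $\phi_0+\xi$ is a deformation iff $\xi\in\mathrm{MC}_m(\m)$, i.e. iff $F(\xi):=d(\xi)-\tfrac12[\xi,\xi]=0$. Decompose $F=F_{\mathrm{B}}+F_{\mathrm{H}}+F_{\mathrm{W}}$ along $\mathrm{C}^3(\g,\g)=\mathrm{B}^3(\g,\g)\oplus\mathrm{H}^3\oplus\mathrm{W}^3$. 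Then $F_{\mathrm{B}}(\xi)=0$ is exactly the condition $\xi\in\mathrm{MC}'_m(\m)$, which by Lemma \ref{l2.1} is equivalent to $\xi=z+g(z)$ for a unique $z\in\m\otimes\mathrm{Z}^2(\g,\g)$; and for such $\xi$ one has $d(\xi)\in\m\otimes\mathrm{B}^3(\g,\g)$, hence $F_{\mathrm{H}}(\xi)=-\tfrac12[\xi,\xi]_{\mathrm{H}^3}=-\tfrac12\,\Omega(z)$, so $F_{\mathrm{H}}(\xi)=0$ iff $\Omega(z)=0$. Consequently the inclusion $\subseteq$ is immediate: a deformation has $F(\xi)=0$, so $F_{\mathrm{B}}(\xi)=0$ (giving $\xi=z+g(z)$) and $F_{\mathrm{H}}(\xi)=0$ (giving $\Omega(z)=0$). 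What remains, and what yields $\supseteq$, is to show that $F_{\mathrm{W}}(\xi)$ vanishes \emph{automatically} once $F_{\mathrm{B}}(\xi)=F_{\mathrm{H}}(\xi)=0$.

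For this I would establish a Bianchi-type identity for the Nijenhuis--Richardson bracket: using only $d^2=0$, $d=\pm[\phi_0,-]$ on cochains (§1.4), the graded antisymmetry and graded Jacobi identity of $[\cdot,\cdot]$, and the resulting relations $d[\xi,\xi]=2[d(\xi),\xi]$ and $[[\xi,\xi],\xi]=0$, one checks
\begin{equation}\label{bianchi}
d\bigl(F(\xi)\bigr)=[\xi,F(\xi)]\qquad\text{for every }\xi\in\m\otimes\mathrm{C}^2(\g,\g).
\end{equation}
Now take $\xi=z+g(z)$ with $\Omega(z)=0$, so $F_{\mathrm{B}}(\xi)=F_{\mathrm{H}}(\xi)=0$ and $F(\xi)=F_{\mathrm{W}}(\xi)\in\m^2\otimes\mathrm{W}^3$ (the membership in $\m^2$ holding because $\xi\in\m\otimes\mathrm{C}^2(\g,\g)$ forces $[\xi,\xi]\in\m^2\otimes\mathrm{C}^3(\g,\g)$). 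I would then run an $\m$-adic induction: if $F_{\mathrm{W}}(\xi)\in\m^k\otimes\mathrm{W}^3$, then $[\xi,F(\xi)]\in\m^{k+1}\otimes\mathrm{C}^4(\g,\g)$ since $\xi\in\m\otimes\mathrm{C}^2$, so (\ref{bianchi}) gives $d\bigl(F_{\mathrm{W}}(\xi)\bigr)\equiv0$ in $(\m^k/\m^{k+1})\otimes\mathrm{B}^4(\g,\g)$; since $d$ restricts to an isomorphism $\mathrm{W}^3\xrightarrow{\ \sim\ }\mathrm{B}^4(\g,\g)$ (the isomorphism recalled before (\ref{e3.1}), with $k=3$), tensoring with $\m^k/\m^{k+1}$ is still injective, hence $F_{\mathrm{W}}(\xi)\in\m^{k+1}\otimes\mathrm{W}^3$. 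Iterating and using $\bigcap_k\m^k=0$ (Krull, $\A$ being Noetherian local) yields $F_{\mathrm{W}}(\xi)=0$, so $F(\xi)=0$ and $\phi_0+z+g(z)\in\Def(\phi_0,\A)$. Combining the two inclusions gives the stated equality, with $g,\Omega$ the maps of Lemma \ref{l2.1} and of the obstruction map, which satisfy $g(0)=\Omega(0)=0$ because $\xi=0$ solves Maurer--Cartan.

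The main obstacle I anticipate is (\ref{bianchi}): it requires keeping track of the Koszul signs in the graded Lie algebra $(\mathrm{C}(\g,\g),[\cdot,\cdot])$ and confirming that the sign convention of §1.4, where $d=(-1)^{m+1}[\phi_0,-]$, is the one that makes the Maurer--Cartan equation take the form $d(\xi)-\tfrac12[\xi,\xi]=0$ used in (\ref{e10}); once the signs are pinned down the identity is a short computation. A secondary, purely expository care point is to phrase the filtration step invariantly via the quotients $\m^k/\m^{k+1}$ rather than through a non-canonical choice of parameters (cf. the non-uniqueness noted in §1.5), which is exactly what the argument above does. Everything else — the reduction to the Hodge components, the use of Lemma \ref{l2.1}, and the identification of the $\mathrm{H}^3$-component with $\Omega$ — is routine bookkeeping.
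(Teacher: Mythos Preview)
Your proof is correct and follows essentially the same route as the paper: the paper shows that $F_{\mathrm{B}}(\xi)=F_{\mathrm{H}}(\xi)=0$ forces $F_{\mathrm{W}}(\xi)=0$ by invoking the graded Jacobi identity in the equivalent form $0=[\phi,[\phi,\phi]]=-2[\phi,F(\xi)]$ (your Bianchi identity is $[\phi_0,F(\xi)]+[\xi,F(\xi)]=0$, i.e.\ the same relation), and then argues that $\mathrm{ad}\,\phi=d+\mathrm{ad}\,\xi$ is injective on $\m\otimes\mathrm{W}^3$ because $d|_{\mathrm{W}^3}$ is injective and $\mathrm{ad}\,\xi$ raises the $\m$-adic filtration. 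Your explicit $\m$-adic induction is exactly the mechanism behind the paper's terse injectivity claim, so the two arguments coincide.
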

\begin{proof} The element $\xi$ belongs to  $\mathrm{MC}_m(\m)$ iff $d(\xi)-\frac{1}{2}\left[\xi,\xi\right]_{\mathrm{B}^3}=0,\left[\xi,\xi\right]_{\mathrm{W}^3}=0,\left[\xi,\xi\right]_{\mathrm{H}^3}=0.$ 
We shall show that if $\xi$ satisfies $\left[\xi,\xi\right]_{\mathrm{H}^3}=0=d(\xi)-\frac{1}{2}\left[\xi,\xi\right]_{\mathrm{B}}$ then $\left[\xi,\xi\right]_{\mathrm{W}^3}=0$. We suppose that $\left[\xi,\xi\right]_{\mathrm{H}^3}=0=d(\xi)-\frac{1}{2}\left[\xi,\xi\right]_{\mathrm{B}^3},$
since 
$0=\left[\phi,\left[\phi,\phi\right]\right]=\left[\phi,d(\xi)-\frac{1}{2}\left[\xi,\xi\right]\right]$ 
and by Eq. (\ref{e3.3}), it follows that $\left[\phi,\left[\xi,\xi\right]_{\mathrm{W}^3}\right]=0$. We can check that $\left[\phi,\left[\xi,\xi\right]_{\mathrm{W}^3}\right]=(d+ad\xi)(\left[\xi,\xi\right]_{\mathrm{W}^3}),$ 
(the graded Jacobi's identity) and the injectivity of $d:\mathrm{W}^2\rightarrow\mathrm{C}^3(\g,\g)$ implies $ad \phi:\mathrm{W}^2\rightarrow\m\otimes\mathrm{C}^3(\g,\g)$ does. It follows that $\left[\xi,\xi\right]_{\mathrm{W}^3}=0$.
By Lemma \ref{l2.1}, $\xi$ is equal to $z+g(z)$ is equivalent to $d(\xi)-\frac{1}{2}\left[\xi,\xi\right]_{\mathrm{B}^3}=0$ and we deduce that $\mathrm{MC}_m(\m)=\left\{z+g(z):\Omega(z)=0,z\in\m\otimes\mathrm{Z}^2(\g,\g)\right\}$ and $\Def(\phi_0,\A)=\phi_0+\mathrm{MC}_m(\m)$.
\end{proof}
\subsection{Versal Deformations}
The Zariski's tangent space of the local $\K$-algebra $\A$  with maximal ideal $\m$ is the dual $(\m/\m^2)^*$ of the vector space $\m/\m^2$. 
\begin{de}\label{V}A deformation $f:\O\rightarrow\mathrm{R}$ with $\mathrm{R}\in\widehat{\got{R}}$, is (formal) versal if \\
\begin{enumerate}
	\item [(i)] for
any deformation $h:\O\rightarrow\mathrm{A}$ with $\A\in\got{R}$, there is a local morphism $\overline{h}:\mathrm{R}\rightarrow\mathrm{A}$ (a base change) such that $h\circ f$ is equivalent to $h$,
	\item[(ii)]the Zariski's tangent space of $\mathrm{R}$ is minimal for the property $(i)$ i.e. 
	$$\dim_\K(\m(\mathrm{R})/\m^2(\mathrm{R}))=\dim_\K(\mathrm{H}^2(\g,\g))$$
\end{enumerate}
	If the maximal ideal $\m$ of $\A$ in $(i)$ satisfies $\m^2=0$, $\pi$ is called miniversal in \cite{FF}. 
\end{de} 
 
If $\theta^\alpha$ is the $\alpha$-th component of the map $\theta:\Gl\left(\V\right)\rightarrow\mathrm{C}^2\left(\V,\V\right)$ defined by $\theta\left(s\right)=s\ast\phi_0$, on the basis $(e_{\alpha})_{\alpha\in\mathcal{I}}$ of $\mathrm{C}^2\left(\V,\V\right)$, then its differential at unit $1$ is given by
$
	(D_1\theta^\alpha)\left(L\right)=\left(L\cdot\phi_0\right)^\alpha=-(dL)^\alpha,L\in\mathrm{C}^1\left(\V,\V\right),
$
since  $\theta^\alpha\left(1+tL\right)=\left(\left(1+tL\right)\ast\phi_0\right)^\alpha=\phi^\alpha_0+t\left(L\circ\phi_0-\phi_0\left(L,-\right)-\phi_0(-,L\right))^\alpha\mod t^2$.
\begin{de} A set $\mathcal{A}$ is said to be admissible at point $\phi_0$ if it is a minimal subset of $\mathcal{I}$ such that the rank of $(D_1\theta^\alpha)_{\alpha\in\mathcal{A}}$ is maximal, i.e. $|\mathcal{A}|=\mathrm{rank}\left(d_{\phi_0}\right)=\dim\mathrm{B}^2\left(\g,\g\right)=\dim\left[\phi_0\right]$.
\end{de}
\begin{rem}\label{R2.1}Let $(e_\alpha)_{\alpha\in\mathcal{I}}$ be a basis of $\mathrm{C}^2\left(\g,\g\right)$. From the exchange basis theorem, there are parts \,$\mathcal{B}:=\mathcal{I}-\mathcal{A}\subset\mathcal{I}$ such that the $(e_\beta)_{\beta\in\mathcal{B}}$ complete a given basis of $\mathrm{B}^2(\g,\g)$ to a basis of $\mathrm{C}^2\left(\g,\g\right)$. The set $\mathcal{A}$ is the complement of $\mathcal{B}$ in $\mathcal{I}$.
\end{rem}
Let $(\phi^{\alpha})=((\phi_0)^\alpha)+(\xi^\alpha)$ be a deformation of $\phi_0$, with $\xi^\alpha\in\m(\A)$. The parameters $\left(\xi^\alpha\right)_{\alpha\in\mathcal{A}}$ are called orbital parameters at $\phi_0$. The set $\mathcal{A}$ permits to define a subscheme of $\L_m$ (\ref{e1.8}) as $\got{F}:=\L^\mathcal{A}_{m,\phi_0},$
where the components $\xi^{\alpha}$, $\alpha\in\mathcal{A}$, are expressed as $\K$-linear combinations of $X^\alpha-(\phi_0)^{\alpha}$.
The tangent space $T_{\phi_0}\left(\got{F}\right)$ is isomorphic to $\mathrm{H}^2(\g,\g)$ and is transversal at the orbit $\left[\phi_0\right]$ under $\G_m(\A)$ at $\phi_0\in\got{F}$ i.e. the tangent spaces at $\phi_0$ satisfy 
$T_{\phi_0}\L_m=T_{\phi_0}\left[\phi_0\right]\oplus T_{\phi_0}\left(\got{F}\right),$
where the orbit is a reduced scheme, and $\got{F}$ is called \textbf{slice} associated with $\mathcal{A}$.
The set of deformations of $\phi_0$ with base $\A$ in the slice $\L^\mathcal{A}_{m,\phi_0}$ is given by
$$
	\Def_\mathcal{A}(\phi_0,\A)=\left\{\phi\in\L^\mathcal{A}_{m,\phi_0}(\A):\mathrm{pr}(\phi)=\phi_0\right\}.
$$
The functor 
$\Def_\mathcal{A}(\phi_0-)$ is representable by the local ring $\O^\mathcal{A}_{\phi_0}$  of $\L^{\mathcal{A}}_{m,\phi_0}$ at $\phi_0$.
If $\mathcal{A}$ is an admissible set of $\mathcal{I}$ at $\phi_0$, then the vector subspace $\mathrm{V}^2_{\mathcal{A}}$ of $\mathrm{C}^2(\g,\g)$ generated by $(e_\alpha)_{\alpha\in\mathcal{I}-\mathcal{A}}$ 
is a linear complement  of $\mathrm{B}^2(\g,\g)$ in $\mathrm{C}^2(\g,\g)$ containing $\mathrm{H}^2$, see Remark \ref{R2.1}.
We have
\begin{equation}
	\Def_\mathcal{A}(\phi_0,\A)=\left\{\phi_0+v;d v-\frac{1}{2}\left[v,v\right]=0,v\in\m\otimes\V^2_{\mathcal{A}}\right\}.
\end{equation}
\begin{prop}\label{p2.2}For any $\A\in\widehat{\got{R}}$ and an admissible set $\mathcal{A}$ at $\phi_0$, then there are two maps $g$ and $\Omega$ defined as above
satisfying $g(0)=\Omega(0)=0$ such that 
$$\Def_{\mathcal{A}}(\phi_0,\A)=\left\{\phi_0+h+g(h);\Omega(h)=0,h\in\m\otimes\mathrm{H}^2\right\}.$$
\end{prop}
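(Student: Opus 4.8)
The plan is to deduce Proposition \ref{p2.2} from Theorem \ref{c3.1} by restricting the analysis of the Maurer-Cartan equation from $\m\otimes\mathrm{C}^2(\g,\g)$ to the subspace $\m\otimes\V^2_{\mathcal{A}}$, using that $\V^2_{\mathcal{A}}$ is a linear complement of $\mathrm{B}^2(\g,\g)$ in $\mathrm{C}^2(\g,\g)$ that contains $\mathrm{H}^2$. The key observation is that the Hodge-type decomposition used in Section 2.1 can be replaced by the decomposition $\mathrm{C}^2(\g,\g)=\mathrm{B}^2(\g,\g)\oplus\V^2_{\mathcal{A}}$, and then further $\V^2_{\mathcal{A}}=\mathrm{H}^2\oplus\mathrm{W}'^2$ where $\mathrm{W}'^2$ is a complement of $\mathrm{H}^2$ inside $\V^2_{\mathcal{A}}$ on which $d_{\phi_0}$ is injective (since $\V^2_{\mathcal{A}}\cap\ker d_{\phi_0}=\V^2_{\mathcal{A}}\cap\mathrm{Z}^2(\g,\g)=\mathrm{H}^2$). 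This gives a Hodge decomposition adapted to $\mathcal{A}$: $\mathrm{C}^2(\g,\g)=\mathrm{B}^2(\g,\g)\oplus\mathrm{H}^2\oplus\mathrm{W}'^2$, with $\mathrm{W}'^2\subset\V^2_{\mathcal{A}}$.

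First I would record that $\Def_{\mathcal{A}}(\phi_0,\A)=\phi_0+\{v\in\m\otimes\V^2_{\mathcal{A}}:dv-\frac12[v,v]=0\}$, which is the displayed formula just before the proposition. Next, I would run the implicit-function-theorem argument of Lemma \ref{l2.1} in this adapted setting: writing $v=h+w$ with $h\in\m\otimes\mathrm{H}^2$ and $w\in\m\otimes\mathrm{W}'^2$, the map $F(h,w)=d(w)-\frac12[h+w,h+w]_{\mathrm{B}^3}$ has invertible Jacobian in the $w$-variables at the origin because $d_{\phi_0}\colon\mathrm{W}'^2\to\mathrm{B}^3(\g,\g)$ is an isomorphism (both spaces have dimension $p$, as $\mathrm{W}'^2$ is a complement of $\mathrm{Z}^2$ in $\V^2_{\mathcal{A}}$ hence maps isomorphically onto $\mathrm{B}^3$ under $d$). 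The formal implicit function theorem then yields a unique $g\colon\m\otimes\mathrm{H}^2\to\m\otimes\mathrm{W}'^2$ with $g(0)=0$ parametrizing the solutions of $F=0$, and one defines $\Omega(h)=[h+g(h),h+g(h)]_{\mathrm{H}^3}$. Then I would repeat verbatim the argument of Theorem \ref{c3.1} showing that $[v,v]_{\mathrm{W}^3}=0$ is automatic once $dv-\frac12[v,v]_{\mathrm{B}^3}=0$ and $[v,v]_{\mathrm{H}^3}=0$: this uses only the graded Jacobi identity $[\phi,[\phi,\phi]]=0$ and the injectivity of $\ad\phi$ on $\mathrm{W}^2$, neither of which is affected by the choice of complement in degree $2$. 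Concatenating these gives $\Def_{\mathcal{A}}(\phi_0,\A)=\{\phi_0+h+g(h):\Omega(h)=0,\ h\in\m\otimes\mathrm{H}^2\}$.

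The one point requiring a little care — and the main potential obstacle — is the claim that $\mathrm{W}'^2$, the chosen complement of $\mathrm{H}^2$ inside $\V^2_{\mathcal{A}}$, indeed maps isomorphically onto $\mathrm{B}^3(\g,\g)$ under $d_{\phi_0}$, so that the Jacobian in Lemma \ref{l2.1}'s proof is genuinely invertible in the adapted decomposition. This follows from a dimension count: $\dim\V^2_{\mathcal{A}}=|\mathcal{I}|-|\mathcal{A}|=\dim\mathrm{C}^2(\g,\g)-\dim\mathrm{B}^2(\g,\g)$, and since $\mathrm{B}^2(\g,\g)\cap\V^2_{\mathcal{A}}=0$ we get $\V^2_{\mathcal{A}}\cap\mathrm{Z}^2(\g,\g)=\mathrm{H}^2$ (both sides have dimension $\dim\mathrm{H}^2$, one containing the other), whence $d_{\phi_0}$ restricted to $\mathrm{W}'^2$ is injective with image of dimension $\dim\mathrm{W}'^2=\dim\V^2_{\mathcal{A}}-\dim\mathrm{H}^2=\dim\mathrm{C}^2-\dim\mathrm{Z}^2=\dim\mathrm{B}^3(\g,\g)$, hence an isomorphism onto $\mathrm{B}^3(\g,\g)$. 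With this in hand the rest is a routine transcription of the proofs of Lemma \ref{l2.1} and Theorem \ref{c3.1}, and I would phrase the write-up so as to invoke those proofs rather than reproduce them.
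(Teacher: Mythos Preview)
Your proposal is correct and follows exactly the approach the paper intends: its entire proof is the single line ``See Theorem \ref{c3.1}'', meaning one re-runs the implicit-function/obstruction argument with the Hodge decomposition in degree $2$ replaced by $\mathrm{C}^2=\mathrm{B}^2\oplus\mathrm{H}^2\oplus\mathrm{W}'^2$ adapted to $\V^2_{\mathcal{A}}$. Your explicit verification that $d_{\phi_0}\colon\mathrm{W}'^2\to\mathrm{B}^3(\g,\g)$ is an isomorphism (via $\V^2_{\mathcal{A}}\cap\mathrm{Z}^2=\mathrm{H}^2$ and the dimension count) is precisely the detail the paper leaves implicit.
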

\begin{proof} See Theorem \ref{c3.1}
\end{proof}
\begin{lem}\label{t3.2}If $\mathcal{A}$ is an admissible set at $\phi_0$, then the map
$$f:(w,v)\mapsto (\mathrm{id}+w)\ast(\phi_0+v)-\phi_0$$ 
is a bijection from $ (\m\otimes\mathrm{W}^1)\times(\m\otimes\mathrm{V}^2_{\mathcal{A}}(\g,\g))$ to $\m\otimes\mathrm{C}^2(\g,\g)$.
\end{lem}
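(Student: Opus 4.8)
The plan is to exhibit $f$ as a composition of linear-algebraic facts and a dimension count, reducing everything to the decomposition $\mathrm{C}^2(\g,\g)=\mathrm{B}^2(\g,\g)\oplus\mathrm{V}^2_{\mathcal{A}}$ coming from the admissibility of $\mathcal{A}$ (Remark \ref{R2.1}). First I would unwind the formula for $f$ modulo higher order: writing $w\in\m\otimes\mathrm{W}^1$ and $v\in\m\otimes\mathrm{V}^2_{\mathcal{A}}$, formula (\ref{e4}) (or rather the computation preceding the definition of the admissible set, $(D_1\theta^\alpha)(L)=-(dL)^\alpha$) gives
$$(\mathrm{id}+w)\ast(\phi_0+v)-\phi_0 = v - d_{\phi_0}w + (\text{terms of higher }\m\text{-adic order}),$$
more precisely terms lying in $\m^2\otimes\mathrm{C}^2(\g,\g)$ expressible polynomially in the entries of $w$ and $v$. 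Because $d_{\phi_0}$ restricts to an isomorphism $\mathrm{W}^1\xrightarrow{\sim}\mathrm{B}^2(\g,\g)$ (this is exactly the Hodge decomposition (\ref{e3.1}) for $k=1$) and $\mathrm{V}^2_{\mathcal{A}}$ is a complement of $\mathrm{B}^2(\g,\g)$ in $\mathrm{C}^2(\g,\g)$, the linear part $(w,v)\mapsto v-d_{\phi_0}w$ is a linear isomorphism from $\mathrm{W}^1\times\mathrm{V}^2_{\mathcal{A}}$ onto $\mathrm{C}^2(\g,\g)$.

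Next I would upgrade this linear isomorphism to a bijection of the $\m$-adic versions. The clean way is the formal/$\m$-adic inverse function theorem, exactly as it was used in the proof of Lemma \ref{l2.1}: regard $f$ as a tuple of formal power series in the coordinates of $w$ and $v$ over $\A$, observe $f(0,0)=0$, and note that its Jacobian at the origin is the matrix of the linear isomorphism above, hence invertible over $\K=\A/\m$ and therefore invertible over the local ring $\A$. The formal implicit/inverse function theorem then yields a unique formal inverse, and since $\A$ is complete and all the series converge in the $\m$-adic topology (the nonlinear terms raise $\m$-adic order), substitution gives a genuine two-sided inverse map $\m\otimes\mathrm{C}^2(\g,\g)\to(\m\otimes\mathrm{W}^1)\times(\m\otimes\mathrm{V}^2_{\mathcal{A}})$. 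This proves $f$ is a bijection.

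Alternatively, and perhaps more in the spirit of the section, one can argue by a filtration/successive-approximation scheme on powers of $\m$: if $f(w,v)=f(w',v')$ one shows $w\equiv w'$ and $v\equiv v'\bmod\m^{k}$ implies the same $\bmod\m^{k+1}$ using that the degree-$1$ part is an isomorphism and the discrepancy is pushed into $\m^{k+1}$; surjectivity is handled by the same inductive correction, building the preimage order by order and passing to the limit using completeness of $\A$. Both routes are routine once the linear statement is in hand.

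The main obstacle — really the only substantive point — is checking that $\mathrm{V}^2_{\mathcal{A}}$ is genuinely a linear complement of $\mathrm{B}^2(\g,\g)=\mathrm{Im}\,d_{\phi_0}$ in $\mathrm{C}^2(\g,\g)$, i.e. that admissibility of $\mathcal{A}$ does what we want; but this is precisely Remark \ref{R2.1} (the exchange-basis theorem applied to a basis of $\mathrm{B}^2(\g,\g)$ together with the $e_\beta$, $\beta\in\mathcal{I}-\mathcal{A}$), combined with the rank computation $|\mathcal{A}|=\mathrm{rank}(d_{\phi_0})=\dim\mathrm{B}^2(\g,\g)$ in the definition of admissible set. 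Once that is invoked, the dimension count $\dim\mathrm{W}^1+\dim\mathrm{V}^2_{\mathcal{A}}=\dim\mathrm{B}^2(\g,\g)+\dim\mathrm{V}^2_{\mathcal{A}}=\dim\mathrm{C}^2(\g,\g)$ forces the linear map to be an isomorphism, and the rest is the by-now-standard completeness argument.
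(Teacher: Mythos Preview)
Your proposal is correct and follows essentially the same route as the paper: both compute the linear part of $f$ at the origin, observe that it is the isomorphism $(w,v)\mapsto v-d_{\phi_0}w$ (equivalently, the block Jacobian $\begin{pmatrix}\mathrm{I}&0\\0&d_{\phi_0}\end{pmatrix}$) coming from $d_{\phi_0}:\mathrm{W}^1\xrightarrow{\sim}\mathrm{B}^2(\g,\g)$ and the complement $\mathrm{V}^2_{\mathcal{A}}$ furnished by admissibility, and then invoke the formal inverse function theorem together with completeness of $\A$, exactly as in the proof of Lemma~\ref{l2.1}. Your alternative successive-approximation argument is a valid variant but is not what the paper does.
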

\begin{proof} 
Let $(f^\alpha)_{\alpha\in\mathcal{I}}$, $(v^\alpha)_{\alpha\in\mathcal{I}-\mathcal{A}}$ and $(w^\alpha)_{\alpha\in\mathcal{W}^1}$ be the components of $f$, $v$ and $w$ relative to above bases (see Section 2.2). We may extend $f$ to an element $F$ of $\A[[V,W]]^{|\mathcal{I}|}$ the ring of formal power series in indeterminates  $V=(V^\alpha)_{\alpha\in\mathcal{I}-\mathcal{A}}$ and $W=(W^\alpha)_{\alpha\in\mathcal{W}^1}$. We have $F(0)=0$. The Jacobian $\mathrm{J}:=\left[(F^\alpha/\partial {V^{\beta}},F^{\alpha'}/\partial{W^{\beta'}})(0)\right]$ is given by	
$\mathrm{J}$=\begin{math}\bordermatrix{&\cr        
  & \mathrm{I}_{\mathcal{I}-\mathcal{A}}& 0 \cr          
  & 0  & d_{{\phi_0}}  \cr  }
  \end{math}
, where $\mathrm{I}_{\mathcal{I}-\mathcal{A}}$ is the identity matrix of size $|\mathcal{I}-\mathcal{A}|$.
It is easy to check that $\mathrm{J}$ is invertible in $\A$, since its projection $\mathrm{pr}(\mathrm{J})$ is inversible in $\A/\m=\K$ ($d_{\phi_0}$ is an isomorphism from $\mathrm{W}^1$ to $\mathrm{B}^2(\g,\g)$). It follows from above and the formal inversion theorem \cite{B} that the formal map $F$ is bijective. It is clear that $F$ converges in the $\m$-adic sense. Since $\A$ is complete, it follows that $f$ coincides with the associated map $F$ by substituting the indeterminates $V_\alpha$ and $W_\alpha$ for the elements of $\A$.
\end{proof}
\begin{thm}If $\mathcal{A}$ is an admissible set at $\phi_0$, then the map
$$F:(w,\phi)\mapsto (\mathrm{id}+w)\ast\phi$$ 
is a bijection from $(\m\otimes\mathrm{W}^1)\times\Def_{\mathcal{A}}(\phi_0,\A)$ to $\Def(\phi_0,\A)$.
\end{thm}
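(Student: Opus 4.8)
The plan is to deduce the statement from Lemma~\ref{t3.2} by a translation by $\phi_0$; once that is done, the only extra ingredient required is that the group $\G_m(\A)$ preserves the set $\Def(\phi_0,\A)$, which was recorded in Section~1.6.

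First I would check that $F$ is well defined, i.e.\ that $(\mathrm{id}+w)\ast\phi\in\Def(\phi_0,\A)$ whenever $\phi\in\Def_{\mathcal{A}}(\phi_0,\A)$. Since $\mathrm{W}^1\subset\mathrm{C}^1(\g,\g)=\gl(\V)$, an element $w\in\m\otimes\mathrm{W}^1$ lies in $\M_m(\m)$, so $\mathrm{id}+w\in\G_m(\A)=\mathrm{id}+\M_m(\m)$. By Section~1.6 the action $s\ast(-)$ of any $s\in\G_m(\A)$ maps $\Def(\phi_0,\A)$ bijectively onto itself (conjugation preserves the Jacobi identities (\ref{n1}) and commutes with the reduction $\mathrm{pr}$), and $\Def_{\mathcal{A}}(\phi_0,\A)\subset\Def(\phi_0,\A)$; hence $F$ indeed takes values in $\Def(\phi_0,\A)$.

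Next I would write a point of $\Def_{\mathcal{A}}(\phi_0,\A)$ as $\phi=\phi_0+v$ with $v\in\m\otimes\V^2_{\mathcal{A}}$ solving the Maurer--Cartan equation, so that $F(w,\phi_0+v)=\phi_0+f(w,v)$, where $f$ is the map of Lemma~\ref{t3.2} and $\V^2_{\mathcal{A}}$ is literally the space occurring there, the span of $(e_\alpha)_{\alpha\in\mathcal{I}-\mathcal{A}}$. Injectivity of $F$ is then immediate from the injectivity part of Lemma~\ref{t3.2}. For surjectivity, given $\psi=\phi_0+u\in\Def(\phi_0,\A)$, i.e.\ $u\in\mathrm{MC}_m(\m)$ (recall $\Def(\phi_0,\A)=\phi_0+\mathrm{MC}_m(\m)$ from Theorem~\ref{c3.1}), Lemma~\ref{t3.2} furnishes a unique pair $(w,v)\in(\m\otimes\mathrm{W}^1)\times(\m\otimes\V^2_{\mathcal{A}})$ with $f(w,v)=u$, equivalently $(\mathrm{id}+w)\ast(\phi_0+v)=\psi$. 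It remains to see that $\phi_0+v\in\Def_{\mathcal{A}}(\phi_0,\A)$: indeed $\phi_0+v=(\mathrm{id}+w)^{-1}\ast\psi$ is the image of a deformation under an element of $\G_m(\A)$, hence is a deformation of $\phi_0$ again by the $\G_m(\A)$-invariance, and $v\in\m\otimes\V^2_{\mathcal{A}}$ by construction; so $\phi_0+v$ lies in the slice locus $\Def_{\mathcal{A}}(\phi_0,\A)$ and $F(w,\phi_0+v)=\psi$.

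The only non-formal point, used in both the well-definedness and the surjectivity step, is this $\G_m(\A)$-invariance of $\Def(\phi_0,\A)$; everything else is the translation-by-$\phi_0$ bookkeeping around Lemma~\ref{t3.2}, which already carries the analytic content (the formal inversion theorem). I do not expect a serious obstacle. The one thing worth spelling out carefully is that, because of this invariance, $f(w,v)\in\mathrm{MC}_m(\m)$ holds if and only if $v$ itself satisfies $dv-\tfrac12[v,v]=0$; consequently $f$ restricts to a bijection between $(\m\otimes\mathrm{W}^1)\times\{\,v\in\m\otimes\V^2_{\mathcal{A}}:dv-\tfrac12[v,v]=0\,\}$ and $\mathrm{MC}_m(\m)$, and translating by $\phi_0$ on both sides yields exactly the claimed bijection $(\m\otimes\mathrm{W}^1)\times\Def_{\mathcal{A}}(\phi_0,\A)\to\Def(\phi_0,\A)$.
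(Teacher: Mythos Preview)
Your proof is correct and follows essentially the same approach as the paper: both arguments translate by $\phi_0$ to reduce to Lemma~\ref{t3.2}, and then use the $\G_m(\A)$-invariance of $\Def(\phi_0,\A)$ to show that the unique preimage $\phi_0+v=(\mathrm{id}+w)^{-1}\ast\psi$ actually lies in $\Def_{\mathcal{A}}(\phi_0,\A)=\Def(\phi_0,\A)\cap(\phi_0+\m\otimes\V^2_{\mathcal{A}})$. Your write-up is simply more explicit about well-definedness and the injectivity half, which the paper leaves implicit.
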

\begin{proof} Let $\phi'=\phi_0+\eta'\in\Def(\phi_0,\A)$ with $\eta'\in\m\otimes\mathrm{C}^2(\g,\g)$. By Lemma \ref{t3.2} there are unique elements $w\in\m\otimes\mathrm{W}^1$ and $v\in\m\otimes\mathrm{V}^2_{\mathcal{A}}(\g,\g)$ such that $\eta'=s\ast(\phi_0+v)-\phi_0$ i.e. $\phi'=s\ast(\phi_0+v)$, with $s=\mathrm{id}+w$. Hence $\phi:=\phi_0+v=s^{-1}\ast\phi'$ and $\phi\in\Def(\phi_0,\A)\cap(\phi_0+\m\otimes\mathrm{V}^2_{\mathcal{A}}(\g,\g))=\Def_{\mathcal{A}}(\phi_0,\A).$
\end{proof}
\begin{cor}\label{c3.2}Let $\mathrm{A}$ be an element $\widehat{\got{R}}$ and $\mathcal{A}$ an admissible set at $\phi_0$. 
For all deformation $\phi'\in\Def(\phi_0,\A)$, there exists a $\phi$ in the orbit $[\phi']$ under $\G_m(\A)$ such that $\phi^\alpha=(\phi_0)^\alpha$ for all $\alpha\in\mathcal{A}$. 
\end{cor}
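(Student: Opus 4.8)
The plan is to deduce Corollary \ref{c3.2} as an immediate consequence of the bijection established in the preceding theorem, so the proof will be short. Recall that the theorem asserts that the map $F:(w,\phi)\mapsto(\mathrm{id}+w)\ast\phi$ is a bijection from $(\m\otimes\mathrm{W}^1)\times\Def_{\mathcal{A}}(\phi_0,\A)$ onto $\Def(\phi_0,\A)$. First I would take an arbitrary deformation $\phi'\in\Def(\phi_0,\A)$ and apply surjectivity of $F$ to produce $w\in\m\otimes\mathrm{W}^1$ and $\phi\in\Def_{\mathcal{A}}(\phi_0,\A)$ with $\phi'=(\mathrm{id}+w)\ast\phi$. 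Setting $s:=\mathrm{id}+w\in\G_m(\A)$, we get $\phi=s^{-1}\ast\phi'$, so $\phi$ lies in the $\G_m(\A)$-orbit $[\phi']$.

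Second, I would observe that membership in $\Def_{\mathcal{A}}(\phi_0,\A)=\{\phi\in\L^{\mathcal{A}}_{m,\phi_0}(\A):\mathrm{pr}(\phi)=\phi_0\}$ is by definition (see (\ref{e1.8})) exactly the condition $\phi^\alpha=(\phi_0)^\alpha$ for all $\alpha\in\mathcal{A}$. Thus the $\phi$ just produced is the required element of $[\phi']$ with the prescribed orbital coordinates fixed at their values at $\phi_0$, which is precisely the assertion of the corollary.

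There is essentially no obstacle here: the content has already been front-loaded into Lemma \ref{t3.2} and the theorem it feeds, whose real work is the formal inversion theorem applied to the Jacobian $\mathrm{J}$ being block-triangular with blocks $\mathrm{I}_{\mathcal{I}-\mathcal{A}}$ and the isomorphism $d_{\phi_0}:\mathrm{W}^1\to\mathrm{B}^2(\g,\g)$. The only point worth a sentence of care is that $\A\in\widehat{\got{R}}$ is assumed complete, which is exactly what allows the formal solution $w$ to be realized as a genuine element of $\m\otimes\mathrm{W}^1$ and hence $s=\mathrm{id}+w$ to be a genuine element of $\G_m(\A)$; this is already handled inside the proof of Lemma \ref{t3.2}, so the corollary's proof can simply cite the theorem.

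\begin{proof} Let $\phi'\in\Def(\phi_0,\A)$. By the preceding theorem the map $(w,\phi)\mapsto(\mathrm{id}+w)\ast\phi$ is onto $\Def(\phi_0,\A)$, so there exist $w\in\m\otimes\mathrm{W}^1$ and $\phi\in\Def_{\mathcal{A}}(\phi_0,\A)$ with $\phi'=(\mathrm{id}+w)\ast\phi$. Put $s:=\mathrm{id}+w\in\G_m(\A)$; then $\phi=s^{-1}\ast\phi'\in[\phi']$. Since $\phi\in\Def_{\mathcal{A}}(\phi_0,\A)$, by definition of the slice $\L^{\mathcal{A}}_{m,\phi_0}$ in (\ref{e1.8}) we have $\phi^\alpha=(\phi_0)^\alpha$ for all $\alpha\in\mathcal{A}$, as required. \end{proof}
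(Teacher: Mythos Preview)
Your proof is correct and matches the paper's approach exactly: the paper states this corollary immediately after the theorem with no explicit proof, treating it as an immediate consequence of the surjectivity of $F$, which is precisely what you do.
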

The local ring $\O_{\phi_0}^\mathcal{A}$ of $\L^\mathcal{A}_{m,\phi_0}$ at $\phi_0$ is the quotient of the local ring $\O_{\phi_0}$ of $\L_m$ at $\phi_0$ by the ideal $\sum_{\alpha\in\mathcal{A}}\O\cdot\xi^\alpha$ generated by the $\xi^\alpha\in\m\left(\O\right)$ where $\alpha\in\mathcal{A}$. The canonical deformation $\mathrm{id}:\O^\mathcal{A}_{\phi_0}\rightarrow\O^\mathcal{A}_{\phi_0}$ of $\phi_0$ in $\L^{\mathcal{A}}_{m,\phi_0}$ defined  by  $y=(y^\alpha)_{\alpha\in\mathcal{I}-\mathcal{A}}$, is the projection of the canonical deformation of $\phi_0$ in $\L_m$ defined by $x=(x^\alpha)_{\alpha\in\mathcal{I}}$,
\begin{equation}\label{e2.8}
	\xymatrix{
	\O_{\phi_0} 
	\ar@{->}[r]^{\mathrm{id}}
	\ar@{->}[d]_{\pi}
	& \O_{\phi_0}
		\ar@{->}[d]^{\pi}
	\\
	\O^\mathcal{A}_{\phi_0}\ar@{->}[r]_{\mathrm{id}}&	\O^\mathcal{A}_{\phi_0}}
	\end{equation}
We use the notation of Proposition \ref{p2.2}.	
\begin{cor}\label{N21}For any  admissible set $\mathcal{A}$ at $\phi_0$, then 
\begin{enumerate}
	\item $\pi:\O\rightarrow\O^\mathcal{A}_{\phi_0}$ is versal.
\item The canonical deformation $\mathrm{id}$ of $\phi_0$ in $\L^{\mathcal{A}}_{m,\phi_0}$ defined by (\ref{e2.8}) may be written on the completing local ring $\widehat{\O^\mathcal{A}_{\phi_0}}$ as
$y=\phi_0+(y_\alpha)+g(y_\alpha)$ such that $\Omega((y_\alpha))=0,$
where $\alpha$ runs through the set $\mathcal{H}^2$.
The parameters $(y_\alpha)_{\alpha\in\mathcal{H}^2}$ are said to be essential.
\end{enumerate}
\end{cor}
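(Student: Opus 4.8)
The plan is to prove the two assertions separately, using the structural results just established in Section 2. For part (1), I would argue that $\pi:\O\rightarrow\O^\mathcal{A}_{\phi_0}$ satisfies both conditions of Definition \ref{V}. The key input is the Theorem immediately preceding Corollary \ref{c3.2}: for $\A\in\widehat{\got{R}}$, every deformation $\phi'\in\Def(\phi_0,\A)$ is $\G_m(\A)$-equivalent to a deformation $\phi$ lying in $\Def_\mathcal{A}(\phi_0,\A)$ (this is exactly Corollary \ref{c3.2}). Now $\Def_\mathcal{A}(\phi_0,-)$ is representable by $\O^\mathcal{A}_{\phi_0}$, so $\phi$ corresponds to a local morphism $\overline{h}:\O^\mathcal{A}_{\phi_0}\rightarrow\A$, and by the commutative square (\ref{e2.8}) relating the canonical deformations, $\overline{h}\circ\pi$ is precisely the deformation $\phi$, which is equivalent to $\phi'=h$. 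First, though, I should reduce the general case $\A\in\got{R}$ to $\A\in\widehat{\got{R}}$: since $f(\O)$ is Noetherian local and a base change through $\widehat{\A}$ does not change equivalence classes over $\A$ (as equivalence only uses $\G_m(\A)$), condition (i) over $\got{R}$ follows from condition (i) over $\widehat{\got{R}}$ by composing $\overline{h}$ with the canonical $\widehat{\got{R}}\to\got{R}$-type restriction; this is a routine check. For condition (ii), I would invoke the discussion preceding Proposition \ref{p2.2}: the tangent space $T_{\phi_0}(\got{F})$ of the slice $\got{F}=\L^\mathcal{A}_{m,\phi_0}$ is isomorphic to $\mathrm{H}^2(\g,\g)$, hence $\dim_\K(\m(\O^\mathcal{A}_{\phi_0})/\m^2)=\dim_\K\mathrm{H}^2(\g,\g)$, which is the minimal possible value; minimality itself follows from the fact that the tangent map of any base change $\overline{h}$ must hit the image of $\mathrm{Z}^2\to\mathrm{H}^2$, so no versal base can have smaller tangent dimension.

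For part (2), the assertion is a concrete description of the canonical deformation $\mathrm{id}:\O^\mathcal{A}_{\phi_0}\rightarrow\O^\mathcal{A}_{\phi_0}$ after passing to the completion $\widehat{\O^\mathcal{A}_{\phi_0}}$. I would apply Proposition \ref{p2.2} with $\A=\widehat{\O^\mathcal{A}_{\phi_0}}$: since $\widehat{\O^\mathcal{A}_{\phi_0}}\in\widehat{\got{R}}$, there exist $g$ and $\Omega$ with $g(0)=\Omega(0)=0$ such that every element of $\Def_\mathcal{A}(\phi_0,\widehat{\O^\mathcal{A}_{\phi_0}})$ has the form $\phi_0+h+g(h)$ with $\Omega(h)=0$ and $h\in\m\otimes\mathrm{H}^2$. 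The canonical deformation $\mathrm{id}$ is one such element, so it is of the form $\phi_0+(y_\alpha)+g((y_\alpha))$ with $\Omega((y_\alpha))=0$, where the $(y_\alpha)_{\alpha\in\mathcal{H}^2}$ are the images in $\widehat{\O^\mathcal{A}_{\phi_0}}$ of the generators corresponding to a basis of $\mathrm{H}^2$. To pin down that the free parameters are exactly indexed by $\mathcal{H}^2$, I would use the decomposition $\mathrm{V}^2_\mathcal{A}=\mathrm{H}^2\oplus\mathrm{W}^2$ noted before Proposition \ref{p2.2} (so $\mathcal{I}-\mathcal{A}=\mathcal{H}^2\sqcup\mathcal{W}^2$) together with Lemma \ref{l2.1}, which says the $\mathrm{W}^2$-components $w^\alpha$ are forced to equal $g^\alpha$ of the $\mathrm{Z}^2$-components; restricting further to the slice kills the $\mathrm{B}^2$-directions, leaving $\mathrm{H}^2$ as the essential coordinates.

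The main obstacle, I expect, is the careful bookkeeping in part (1) condition (ii): one must be sure that the identification $T_{\phi_0}(\got{F})\cong\mathrm{H}^2(\g,\g)$ is compatible with the scheme structure of $\L^\mathcal{A}_{m,\phi_0}$ at $\phi_0$ (i.e. that passing to the local ring and its cotangent space $\m/\m^2$ gives the dual of $\mathrm{H}^2$ and not something larger due to non-reducedness), and that minimality is genuinely forced — namely that any other versal base $\mathrm{R}'$ admits a surjection onto $\mathrm{H}^2$ at the level of tangent spaces because the tangent-level base-change of the universal deformation must be surjective onto the deformation tangent space $\mathrm{H}^2$. Everything else is a direct application of the representability of $\Def_\mathcal{A}$, the slice transversality, and Corollary \ref{c3.2}; the completion step in part (2) is purely formal since $\Def_\mathcal{A}$ was already extended to $\widehat{\got{R}}$ by inverse limits.
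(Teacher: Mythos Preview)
Your approach is correct and essentially identical to the paper's: for part (1)(i) you use Corollary \ref{c3.2} and factor through $\O^\mathcal{A}_{\phi_0}$, for (1)(ii) you identify the tangent space of the slice with $\mathrm{H}^2(\g,\g)$, and for part (2) you apply Proposition \ref{p2.2} with $\A=\widehat{\O^\mathcal{A}_{\phi_0}}$ --- exactly as the paper does. One minor point: your added concern about proving ``minimality'' in (ii) is unnecessary, since in Definition \ref{V} condition (ii) is \emph{defined} as the equality $\dim_\K(\m/\m^2)=\dim_\K\mathrm{H}^2(\g,\g)$, so there is nothing further to prove once the tangent space of the slice is identified; similarly the paper does not fuss over the $\got{R}$ versus $\widehat{\got{R}}$ distinction.
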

\begin{proof} 1. By Corollary \ref{c3.2}, each deformation $h:\O_{\phi_0}\rightarrow\A$ with $\A\in\widehat{\got{R}}$ is equivalent to a deformation $h_0:\O_{\phi_0}\rightarrow\A$ such that $h_0\left(x^\alpha\right)=(\phi_0)^\alpha$ for all $\alpha\in\mathcal{A}$. Then $h_0$ vanishes on the ideal generated by the elements $x^\alpha-(\phi_0)^\alpha$, $\alpha\in\mathcal{A}$, so it factorizes through $\O^\mathcal{A}_{\phi_0}$. There is a local morphism $\overline{h}_0:\O^\mathcal{A}_{\phi_0}\rightarrow\A$ such that $h_0=\overline{h}_0\circ\pi$.
It is clear that $\dim_\K(\m(\O^{\mathcal{A}}_{\phi_0})/\m^2(\O^{\mathcal{A}}_{\phi_0}))=\dim_\K(\mathrm{H}^2(\g,\g))$. The statement 2 is coming from Proposition \ref{p2.2}.
\end{proof}
\begin{rem}\label{r2.1} The results in this section remain true in $\L_n^{\mathrm{R}}$ by replacing $\G_m(\A)$, $\phi_0$, $\O_{\phi_0}$, $\O^{\mathcal{A}}_{\phi_0}$ and $\mathrm{C}^k(\g,\g)$ to $\G_n(\A)^{\mathrm{R}}$, $\varphi_0$, $\O^\mathrm{R}_{\varphi_0}$, $\O^{\mathrm{R},\mathcal{A'}}_{\varphi_0}$, $\mathrm{C}^k(\n,\n)^{\mathrm{R}}$ respectively. Denote by $\Omega'$, $g'$, $F'$, $\mathcal{I'}$ and $\mathcal{A'}$ the corresponding elements.  
\end{rem}
\begin{center}
Algorithm of construction of a versal deformation of $\phi_0$.
\end{center}
\begin{enumerate}
	\item Determine an admissible set $\mathcal{A}$ at $\phi_0$, a minimal subset of $\mathcal{I}$ such that the rank of $(D_1\theta^\alpha)_{\alpha\in\mathcal{A}}$ is maximal, i.e. $|\mathcal{A}|=\mathrm{rank}\left(d_{\phi_0}\right)$.
	\item Fix $X^\alpha$ to $(\phi_0)^\alpha$ for each $\alpha\in\mathcal{A}$ in the Jacobi's equations  $(\ref{n1})$ in coordinates $X=(X^\alpha)_{\alpha\in\mathcal{I}}$ defining the scheme $\L_m$. This corresponds to quotient by the ideal spanned by $X^\alpha-(\phi_0)^\alpha, \alpha\in\mathcal{A}$. We obtain the Jacobi's equations in the quotient coordinates $\overline{X}=(\overline{X}^\alpha)_{\alpha\in\mathcal{B}=\mathcal{I}-\mathcal{A}}$, denoted by $(2).$
	\item Consider the linear equations $(3)$ with coefficients in $\K$  of $(2)$. Solve Eq. $(3)$ in function of an arbitrary choice of variables $(\overline{X}^\lambda)$ indexed by a set $\mathcal{H}$, of cardinal $\dim_\K\mathrm{H}^2(\g,\g)$. This choice permits us to build a power series $g$ in coordinates $(\overline{X}^\lambda)_{\lambda\in\mathcal{H}}$ such that $\overline{X}=(\overline{X}^\lambda)_{\lambda\in\mathcal{H}}+g((\overline{X}^\lambda)_{\lambda\in\mathcal{H}})$.
	\item The versal deformation of $\phi_0$ associated with $\mathcal{A}$ in $\L_m$ is given by: \\
	$(\phi_0)^\alpha$, for $\alpha\in\mathcal{A}$, and $\overline{X}^\alpha$, for $\alpha\in\mathcal{B}$.
	\item Eq $(2)$ defines the scheme $\L_m^\mathcal{A}$, and the local ring $\O^{\mathcal{A}}_{\phi_0}$ is isomorphic to the ring of rational functions $\frac{P(\overline{X})}{Q(\overline{X})}$, where $
	P,Q\in\K[\overline{X}]$, and $Q(\phi_0)\neq 0$.
\end{enumerate}
\section{Reduction Theorem and Formal Rigidity}
\subsection{Reduction Theorem}
Let $\g=(\mathrm{R}\ltimes\n,\phi_0)$ be an algebraic Lie algebra with $(\n,\varphi_0)$ the nilpotent radical of dimension $n$ and $\mathrm{R}=\mathrm{U}\oplus\mathrm{S}$ a maximal reductive Lie subalgebra. To construct a versal deformation of a point in $\L_m$ is a hard problem, but in the algebraic case this problem can be reduced from that of $\varphi_0$ in $\L_n^{\mathrm{R}}$, under some conditions. The equality of the number of essential parameters of $\phi_0$ in $\L_m$ and $\varphi_0$ in $\L_n^{\mathrm{R}}$ is a necessary condition according to Corollary \ref{N21} and Remark \ref{r2.1}.\\
The graded Lie algebra morphism $\mathrm{i}:\mathrm{C}\left(\n,\n\right)^\mathrm{R}\rightarrow\mathrm{C}\left(\g,\g\right)$ is defined by the composition of graded differential complex morphisms $\alpha$ and $\beta$ defined by
\begin{equation}\label{C}
	0\rightarrow\mathrm{C}\left(\n,\n\right)^\mathrm{R}\stackrel{\alpha}{\rightarrow}\mathrm{C}\left(\n,\g\right)^\mathrm{R}\stackrel{\pi}{\rightarrow}\mathrm{C}\left(\n,\g/\n\right)^\mathrm{R}\rightarrow 0,
\end{equation}
and $\beta:\mathrm{C}\left(\n,\g\right)^\mathrm{R}\stackrel{}{\rightarrow}\mathrm{C}\left(\g,\g\right)$
defined  by $\mathrm{i}\left(f\right)|_{\mathrm{R}\times\g^{q-1}}=0\quad\mathrm{and}\quad\mathrm{i}\left(f\right)|_{\n^{q}}=f,$
for each $f\in\mathrm{C}^q\left(\n,\n\right)^\mathrm{R}$.
This lemma is left to the reader.
\begin{lem}\label{n31} For all $f$ and $g$ of $\mathrm{C}\left(\n,\n\right)^\mathrm{R}$, we then have\\
	$i\left(\left[f,g\right]\right)=\left[i\left(f\right),i\left(g\right)\right]$ and $i\left(\left[\varphi_0,f\right]\right)=\left[\phi_0,i\left(f\right)\right]$.
\end{lem}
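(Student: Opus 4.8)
The plan is to verify the two identities directly from the definitions of the maps $\alpha$, $\beta$ and the Nijenhuis--Richardson bracket, exploiting the fact that $\mathrm{i}$ is by construction a morphism of graded differential complexes. First I would recall that for $f \in \mathrm{C}^q(\n,\n)^{\mathrm R}$ the cochain $\mathrm{i}(f) \in \mathrm{C}^q(\g,\g)$ is characterized by $\mathrm{i}(f)|_{\n^q} = f$ (viewing $f$ as $\n$-valued, hence $\g$-valued via $\n \subset \g$) and $\mathrm{i}(f)(x_1,\dots,x_q) = 0$ whenever some $x_i \in \mathrm R$. Because $\g = \mathrm R \oplus \n$ as a vector space and $\mathrm{i}(f)$ vanishes as soon as one argument lies in $\mathrm R$, the evaluation of any composition $\mathrm{i}(f) \bullet \mathrm{i}(g)$ on a tuple of elements of $\g$ reduces, by multilinearity, to the evaluation on tuples of elements of $\n$; and on such tuples $\mathrm{i}(f)$ and $\mathrm{i}(g)$ restrict to $f$ and $g$.

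The key step is the compatibility with the $\bullet$-product: I would show $\mathrm{i}(f) \bullet \mathrm{i}(g) = \mathrm{i}(f \bullet g)$. The product $f \bullet g$ is an alternating sum of insertions of $g$ into one slot of $f$; writing out $(\mathrm{i}(f) \bullet \mathrm{i}(g))(x_1,\dots,x_{p+q-1})$ one inserts $\mathrm{i}(g)(x_{i},\dots,x_{i+q-1})$ into $\mathrm{i}(f)$. If all $x_j \in \n$ then $\mathrm{i}(g)(x_i,\dots) = g(x_i,\dots) \in \n$, so the outer $\mathrm{i}(f)$ is again evaluated on elements of $\n$ and equals $f$, giving exactly $(f\bullet g)(x_1,\dots)$. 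If some $x_j \in \mathrm R$, one checks both sides vanish: the left side vanishes because $\mathrm{i}(f)$ or $\mathrm{i}(g)$ receives the argument $x_j \in \mathrm R$ (here one uses that $g$, being $\mathrm R$-invariant and $\n$-valued, extends to a cochain $\mathrm{i}(g)$ that kills $\mathrm R$-arguments, so even the inserted value stays in $\n$ and the outer $\mathrm{i}(f)$ still sees $x_j \in \mathrm R$ in one of its remaining slots when $j$ is outside the inserted block), and the right side vanishes since $\mathrm{i}(f\bullet g)$ by definition kills $\mathrm R$-arguments. Taking the graded commutator $[\cdot,\cdot] = f\bullet g - (-1)^{(p-1)(q-1)} g \bullet f$ then yields $\mathrm{i}([f,g]) = [\mathrm{i}(f),\mathrm{i}(g)]$.

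For the second identity, I would combine the first with the fact, recorded in Section~1.4, that the Chevalley--Eilenberg differential is $d_{\varphi_0} f = (-1)^{q+1}[\varphi_0, f]$ on $\mathrm{C}^q(\n,\n)^{\mathrm R}$ and $d_{\phi_0} = (-1)^{q+1}[\phi_0, \cdot]$ on $\mathrm{C}^q(\g,\g)$, together with the assertion in~(\ref{C}) and the line after it that $\mathrm{i} = \beta \circ \alpha$ is a morphism of graded differential complexes, i.e. $\mathrm{i} \circ d_{\varphi_0} = d_{\phi_0} \circ \mathrm{i}$. Hence $\mathrm{i}([\varphi_0, f]) = (-1)^{q+1}\,\mathrm{i}(d_{\varphi_0} f) = (-1)^{q+1}\, d_{\phi_0}(\mathrm{i}(f)) = [\phi_0, \mathrm{i}(f)]$, the sign $(-1)^{q+1}$ cancelling on both ends. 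Alternatively, and more self-containedly, the identity $\mathrm{i}([\varphi_0,f]) = [\phi_0,\mathrm{i}(f)]$ follows from the bracket identity of the previous paragraph once one observes that $\mathrm{i}(\varphi_0) = \phi_0$: indeed $\phi_0 = \mathrm{R}\ltimes\varphi_0$ restricts to $\varphi_0$ on $\n^2$, and the remaining components of $\phi_0$ (the action of $\mathrm R$ on $\n$, and the bracket on $\mathrm R$) are exactly what is needed — but $\mathrm{i}(\varphi_0)$ as defined kills $\mathrm R$-arguments, so this shortcut requires care and the cleaner route is the differential-complex argument.

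The main obstacle is the bookkeeping in the case where some argument lies in $\mathrm R$: one must be sure that inserting $\mathrm{i}(g)(x_i,\dots,x_{i+q-1})$ never "hides" an $\mathrm R$-argument inside a value that the outer $\mathrm{i}(f)$ could fail to detect. This is precisely where $\mathrm R$-invariance of $f$ and $g$ enters — it guarantees that $g$ maps $\n^q$ into $\n$ (never into the $\mathrm R$-part), so the inserted value is an element of $\n$ and cannot re-introduce an $\mathrm R$-component; and if an $\mathrm R$-argument lies outside the inserted block it survives into a slot of the outer $\mathrm{i}(f)$, forcing vanishing. Once this case analysis is organized by whether the $\mathrm R$-argument falls inside or outside the inserted block, the verification is routine, which is presumably why the authors leave it to the reader.
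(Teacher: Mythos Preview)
Your approach is correct and is the natural direct verification the paper expects (the paper itself gives no proof, writing only ``This lemma is left to the reader''). Two small remarks.

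First, your attribution of where $\mathrm{R}$-invariance enters is slightly off. The identity $\mathrm{i}(f)\bullet\mathrm{i}(g)=\mathrm{i}(f\bullet g)$, and hence the first bracket identity, holds for \emph{any} $f,g\in\mathrm{C}(\n,\n)$ once one extends by zero on $\mathrm{R}$-arguments: that $g$ lands in $\n$ is simply because $g\in\mathrm{C}(\n,\n)$, not because $g$ is $\mathrm{R}$-invariant. Invariance is only needed so that $[f,g]$ again lies in $\mathrm{C}(\n,\n)^{\mathrm R}$, i.e.\ so that the statement is well-posed.

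Second, for the identity $\mathrm{i}([\varphi_0,f])=[\phi_0,\mathrm{i}(f)]$ your clean route---quoting the line before the lemma that $\alpha,\beta$ are differential-complex morphisms---is acceptable but close to circular, since that assertion is exactly what the second identity encodes. The self-contained verification is short and is where $\mathrm{R}$-invariance is genuinely used: for $x\in\mathrm{R}$ one has $\rho(x)\,\mathrm{i}(f)=0$ by construction, and $\theta(x)\,\mathrm{i}(f)=0$ because on $\n^q$ it computes the $\mathrm{R}$-action on $f$, which vanishes; the identity $d\circ\rho(x)+\rho(x)\circ d=\theta(x)$ then gives $\rho(x)\,d_{\phi_0}\mathrm{i}(f)=0$, so $d_{\phi_0}\mathrm{i}(f)$ also kills $\mathrm{R}$-arguments, and on $\n^{q+1}$ it visibly agrees with $\mathrm{i}(d_{\varphi_0}f)$. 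You correctly flagged that the shortcut $\mathrm{i}(\varphi_0)=\phi_0$ fails.
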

We deduce the linear morphism of cohomologies $\overline{\mathrm{i}}=\oplus_q\overline{\mathrm{i}_q}:\mathrm{H}\left(\n,\n\right)^\mathrm{R}\rightarrow\mathrm{H}\left(\g,\g\right).$
The groups $\Gl\left(\n\right)_0^\mathrm{R}$ and $\Gl\left(\g\right)$ canonically act on $\mathrm{C}\left(\n,\n\right)^\mathrm{R}$ and $\mathrm{C}\left(\g,\g\right)$  by $*$. We define an injection, again denoted by $\mathfrak{I}$, from the group $\Gl\left(\n\right)_0^\mathrm{R}$ into $\Gl\left(\g\right)$ which sends $s\in\Gl\left(\n\right)_0^\mathrm{R}$ into $\mathfrak{I}(s)$ with $\mathfrak{I}(s)|_\n=s$ and $\mathfrak{I}(s)|_\mathrm{R}=\mathrm{id}_{\mathrm{R}}$.
\begin{lem}\label{n32} Let $\varphi_0\in\L_n^\mathrm{R}\left(\K\right)$ and $\phi_0=\mathfrak{I}(\varphi_0)$. Then
$i\left(s\ast f\right)=\mathfrak{I}(s)\ast i\left(f\right)$, for all $s\in\Gl\left(\n\right)_0^\mathrm{R}$, $f\in\mathrm{C}\left(\n,\n\right)^\mathrm{R}$, and the following diagram is commutative
	\begin{equation}
	\xymatrix{
	\varphi_0+f 
	\ar@{->}[r]^{\mathfrak{I}}
	\ar@{->}[d]_{s\ast}
	& \phi_0+\mathrm{i}(f)
		\ar@{->}[d]^{\mathfrak{I}(s)\ast}
	\\
	s\ast(\varphi_0+f)\ar@{->}[r]_{\mathfrak{I}}&	\mathfrak{I}(s)\ast(\phi_0+\mathrm{i}(f))}
	\end{equation}
\end{lem}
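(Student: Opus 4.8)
The plan is to deduce the commutativity of the square from the single cochain identity $i(s\ast f)=\mathfrak{I}(s)\ast i(f)$, and to establish that identity by a direct evaluation on tuples adapted to the splitting $\g=\mathrm{R}\oplus\n$.

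\emph{Step 1: the cochain identity.} Both members are $q$-cochains on $\g$, so by multilinearity and alternation it suffices to evaluate them on tuples $(x_1,\dots,x_q)$ in which each $x_\ell$ is a basis vector of $\mathrm{R}$ or of $\n$. Since $\mathfrak{I}(s)|_\n=s$ and $\mathfrak{I}(s)|_{\mathrm{R}}=\mathrm{id}_{\mathrm{R}}$, the automorphism $\mathfrak{I}(s)$ stabilizes both $\mathrm{R}$ and $\n$, and $\mathfrak{I}(s)^{-1}$ acts as $s^{-1}$ on $\n$ and as the identity on $\mathrm{R}$. If some $x_\ell\in\mathrm{R}$, then $\mathfrak{I}(s)^{-1}x_\ell\in\mathrm{R}$, hence $i(f)\bigl(\mathfrak{I}(s)^{-1}x_1,\dots,\mathfrak{I}(s)^{-1}x_q\bigr)=0$ by the definition of $i$, and likewise $i(s\ast f)(x_1,\dots,x_q)=0$; both sides vanish. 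If all $x_\ell\in\n$, then $\mathfrak{I}(s)^{-1}x_\ell=s^{-1}x_\ell\in\n$, so $i(f)$ evaluated there equals $f(s^{-1}x_1,\dots,s^{-1}x_q)\in\n$, on which $\mathfrak{I}(s)$ acts as $s$; this gives $s\bigl(f(s^{-1}x_1,\dots,s^{-1}x_q)\bigr)=(s\ast f)(x_1,\dots,x_q)=i(s\ast f)(x_1,\dots,x_q)$. This proves the first assertion of the lemma; taking $f=\varphi_0$ gives in particular $i(s\ast\varphi_0)=\mathfrak{I}(s)\ast i(\varphi_0)$.

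\emph{Step 2: the $\mathrm{R}$-part of $\phi_0$.} Set $\rho:=\phi_0-i(\varphi_0)$, the $2$-cochain on $\g$ carrying $\rho(e_{n+i},e_j)=\delta_i e_j$ and $\rho|_{\mathrm{R}\times\mathrm{R}}=\phi_0|_{\mathrm{R}\times\mathrm{R}}$, and vanishing on $\n\times\n$. By the very definition of the embedding $\mathfrak{I}$ (the components indexed by $\mathcal{I}'$ are unchanged, those indexed by $\mathcal{I}-\mathcal{I}'$ are those of $\phi_0$, i.e. of $\rho$), one has $\mathfrak{I}(\varphi_0+h)=\rho+i(\varphi_0+h)=\phi_0+i(h)$ for every $h\in\mathrm{C}^2(\n,\n)^{\mathrm{R}}$; in particular the horizontal arrows of the square are the affine map $h\mapsto\phi_0+i(h)$. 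Next I would verify $\mathfrak{I}(s)\ast\rho=\rho$: on $\mathrm{R}\times\mathrm{R}$ this holds because $\mathfrak{I}(s)$ fixes $\mathrm{R}$ pointwise and $\phi_0|_{\mathrm{R}\times\mathrm{R}}$ is $\mathrm{R}$-valued; on $\mathrm{R}\times\n$ one computes $(\mathfrak{I}(s)\ast\rho)(e_{n+i},e_j)=s\bigl(\delta_i(s^{-1}e_j)\bigr)=(s\,\delta_i\,s^{-1})(e_j)=\delta_i(e_j)$, the last step being exactly the hypothesis that $s\in\Gl(\n)_0^{\mathrm{R}}$ commutes with $\D=\mathrm{ad}_{\n}\mathrm{R}$.

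\emph{Step 3: assembling the square.} Using $\phi_0+i(f)=\rho+i(\varphi_0+f)$, the linearity of $i$ and the linearity of the map $g\mapsto\mathfrak{I}(s)\ast g$ in the cochain slot, together with Steps 1 and 2, I obtain
\begin{align*}
\mathfrak{I}(s)\ast\bigl(\phi_0+i(f)\bigr)
&=\mathfrak{I}(s)\ast\rho+\mathfrak{I}(s)\ast i(\varphi_0)+\mathfrak{I}(s)\ast i(f)\\
&=\rho+i(s\ast\varphi_0)+i(s\ast f)\\
&=\rho+i\bigl(s\ast(\varphi_0+f)\bigr)=\mathfrak{I}\bigl(s\ast(\varphi_0+f)\bigr),
\end{align*}
where in the last line I use that $s\ast(\varphi_0+f)\in\mathrm{C}^2(\n,\n)^{\mathrm{R}}$, since $s$ preserves $\mathrm{R}$-invariance. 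This is precisely the commutativity of the diagram. I expect the only slightly delicate points to be the bookkeeping in Step 1 (checking that a single $\mathrm{R}$-argument forces both sides to vanish, which rests on $\mathfrak{I}(s)$ stabilizing $\mathrm{R}$) and the use of $s\,\delta_i\,s^{-1}=\delta_i$ in Step 2; the rest is formal manipulation with the decomposition $\g=\mathrm{R}\oplus\n$.
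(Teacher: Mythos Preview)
Your proof is correct and follows essentially the same route as the paper's. The paper's argument is a two-line sketch: it records that $\mathfrak{I}(s)\ast\phi_0=\mathfrak{I}(s\ast\varphi_0)$ ``comes from the commutativity of $\mathfrak{I}(s)$ with $\mathrm{R}$'' (your Step~2, the invariance of the $\mathrm{R}$-part $\rho$ via $s\,\delta_i\,s^{-1}=\delta_i$), and then says the diagram is deduced from the cochain identity (your Steps~1 and~3); you have simply spelled out the verifications the paper leaves implicit.
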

\begin{proof} The equality $\mathfrak{I}(s)\ast\phi_0=\mathfrak{I}(s\ast\varphi_0)$ comes from the commutativity of $\mathfrak{I}(s)$ with $\mathrm{R}$. The second statement is deduced from the first.
\end{proof}\\
The map $\mathrm{i}$ extends  to a graded Lie algebra morphism $\mathrm{id}\otimes\mathrm{i}$ from $\A\otimes\mathrm{C}\left(\n,\n\right)^\mathrm{R}$ to $\A\otimes\mathrm{C}\left(\g,\g\right)$, denoted again by $\mathrm{i}$, satisfying similar lemmas \ref{n31} and \ref{n32}. From Lemma \ref{n31}, if $\varphi=\varphi_0+\xi\in\Def_n^\mathrm{R}(\varphi_0,\A)$ 
then $\mathrm{i}(d\xi-\frac{1}{2}\left[\xi,\xi\right])=d\mathrm{i}(\xi)-\frac{1}{2}\left[\mathrm{i}(\xi),\mathrm{i}(\xi)\right]=0,$ 
i.e. $\phi_0+\mathrm{i}(\xi)\in\Def_m(\phi_0,\A)$. From Lemma \ref{n32}, the scheme morphism $\mathfrak{I}:\L_n^\mathrm{R}\rightarrow\L_m$ induces a map $\varphi_0+\xi\mapsto\phi_0+\mathrm{i}(\xi)$ from $\Def_n^\mathrm{R}(\varphi_0,\A)$ to $\Def_m(\phi_0,\A)$ denoted again by $\mathfrak{I}$. It passes to the quotient $$\overline{\mathfrak{I}}:\overline{\Def}_n^\mathrm{R}\left(\varphi_0,\A\right)\longrightarrow\overline{\Def}_m\left(\phi_0,\A\right),$$
modulo the actions of the groups
$\G_n\left(\A\right)^\mathrm{R}$ and $\G_m\left(\A\right)$ 
respectively.
\begin{lem}\label{n33}We then have
\begin{enumerate} 
	\item $\mathrm{Z}\left(\g,\g\right)\cap 	\mathrm{i}(\mathrm{C}\left(\n,\n\right)^\mathrm{R})=\mathrm{i}(\mathrm{Z}\left(\n,\n\right)^\mathrm{R})$.
	\item $\mathrm{B}\left(\g,\g\right)\cap \mathrm{i}(\mathrm{C}^{p+1}\left(\n,\n\right)^\mathrm{R})=\mathrm{i}(\mathrm{B}^{p+1}(\n,\n)^{\mathrm{R}}+\delta\mathrm{Z}^p(\n,\g/\n)^{\mathrm{R}})$, 
\end{enumerate}
where $\delta$ defines the connection $\partial$ in the long sequence of the cohomology .
\end{lem}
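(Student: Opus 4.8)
The plan is to analyze the two statements via the short exact sequence of $\mathrm{R}$-modules $0\to\n\to\g\to\g/\n\to 0$ and the resulting long exact sequence in cohomology, exploiting the fact that $\mathrm{R}$ is reductive so that all the relevant complexes of $\mathrm{R}$-invariants are exact wherever the ambient complexes are. First I would make explicit the three subcomplexes: $\mathrm{i}$ identifies $\mathrm{C}(\n,\n)^{\mathrm{R}}$ with the subcomplex of $\mathrm{C}(\g,\g)$ of those cochains that vanish whenever one argument lies in $\mathrm{R}$ and whose values lie in $\n$. A cochain $f\in\mathrm{C}^q(\g,\g)$ that vanishes on $\mathrm{R}\times\g^{q-1}$ but whose values are only required to lie in $\g$ corresponds to $\mathrm{C}^q(\n,\g)^{\mathrm{R}}$ (via $\beta$), and projecting the values onto $\g/\n$ gives $\mathrm{C}^q(\n,\g/\n)^{\mathrm{R}}$; this is exactly the short exact sequence (\ref{C}). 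Since $\g/\n\cong\mathrm{R}$ as an $\mathrm{R}$-module and $\mathrm{C}(\n,-)^{\mathrm{R}}$ is exact on short exact sequences of coefficient modules (reductivity of $\mathrm{R}$, Whitehead-type vanishing being irrelevant here — it is just that taking $\mathrm{R}$-invariants is exact), (\ref{C}) is a short exact sequence of complexes, yielding a connecting map $\partial$ and a long exact sequence relating $\mathrm{H}(\n,\n)^{\mathrm{R}}$, $\mathrm{H}(\n,\g)^{\mathrm{R}}$ and $\mathrm{H}(\n,\g/\n)^{\mathrm{R}}$.

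For statement (1), I would argue by a direct cocycle computation combined with the characterization of $\mathrm{i}(\mathrm{C}(\n,\n)^{\mathrm{R}})$. Take $f\in\mathrm{C}(\n,\n)^{\mathrm{R}}$ and suppose $\mathrm{i}(f)\in\mathrm{Z}(\g,\g)$, i.e. $d_{\phi_0}\mathrm{i}(f)=0$. By Lemma \ref{n31} we have $d_{\phi_0}\mathrm{i}(f)=\mathrm{i}(d_{\varphi_0}f)$ up to sign, and since $\mathrm{i}$ is injective this forces $d_{\varphi_0}f=0$, i.e. $f\in\mathrm{Z}(\n,\n)^{\mathrm{R}}$; the reverse inclusion is immediate from the same identity. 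So (1) is essentially a formal consequence of Lemma \ref{n31} and the injectivity of $\mathrm{i}$ — the only thing to be careful about is that the differential $d$ referred to in the lemma is the Chevalley–Eilenberg differential on each side with the correct coefficient module, which is exactly how $\mathrm{i}$ was set up through $\alpha$.

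For statement (2), which is the substantive one, I would fix $p$ and a cochain $f\in\mathrm{C}^{p+1}(\n,\n)^{\mathrm{R}}$, and ask when $\mathrm{i}(f)=d_{\phi_0}h$ for some $h\in\mathrm{C}^{p}(\g,\g)$. The idea is to decompose $h$ according to its behaviour on $\mathrm{R}$-arguments and the location of its values. Restricting the equation $\mathrm{i}(f)=d_{\phi_0}h$ to $\n^{p+1}$ and projecting values to $\g/\n$ gives $0=\delta(\pi\circ h|_{\n^{p}})$ — here I would need to check that $h$ may be averaged over $\mathrm{R}$ (reductivity) so that $h|_{\n^{p}}$ may be taken $\mathrm{R}$-equivariant with values in $\g$, and that the component of $\mathrm{i}(f)$ in $\g/\n$ is zero because $f$ has values in $\n$. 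The part of $h$ with values in $\n$, call it $h_{\n}$, then satisfies $\mathrm{i}(f)\equiv d h_{\n}$ modulo the contribution of the $\g/\n$-valued part, and tracking that contribution through the definition of the connecting map $\partial$ shows precisely that the class of $f$ in $\mathrm{H}^{p+1}(\n,\n)^{\mathrm{R}}$ is a coboundary iff it lies in $\mathrm{B}^{p+1}(\n,\n)^{\mathrm{R}}+\partial\mathrm{Z}^{p}(\n,\g/\n)^{\mathrm{R}}$, i.e. in the notation of the statement $\mathrm{B}^{p+1}(\n,\n)^{\mathrm{R}}+\delta\mathrm{Z}^{p}(\n,\g/\n)^{\mathrm{R}}$. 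I expect the main obstacle to be the bookkeeping in this last step: one must show that a general $h$ realizing the coboundary can be replaced, without changing $d_{\phi_0}h$ in the relevant subspace, by one which is $\mathrm{R}$-equivariant, vanishes on $\mathrm{R}\times\g^{p-1}$, and splits cleanly into an $\n$-valued piece (giving $\mathrm{B}^{p+1}(\n,\n)^{\mathrm{R}}$) and a piece whose class accounts for the $\delta\mathrm{Z}^{p}(\n,\g/\n)^{\mathrm{R}}$ term. This reduction uses repeatedly that averaging over the reductive group $\mathrm{R}$ is an exact projection onto invariants, and that the extension $\g$ of $\mathrm{R}$ by $\n$ splits as an $\mathrm{R}$-module, so that the filtration of $\mathrm{C}(\g,\g)$ by "number of $\mathrm{R}$-slots" and "position of values relative to $\n$" is compatible with $d_{\phi_0}$ up to lower-order terms that one controls via Lemma \ref{n31}.
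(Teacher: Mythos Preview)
Your approach is correct and for part (1) coincides exactly with the paper's: both use that $\mathrm{i}$ intertwines the differentials (Lemma \ref{n31}) and is injective.

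For part (2) your route is close in spirit but organized differently. The paper does \emph{not} average over $\mathrm{R}$; instead, from $\mathrm{ad}_\g\mathrm{R}\cdot dh=0$ it deduces $\mathrm{ad}_\g\mathrm{R}\cdot h\subset\mathrm{Z}^p(\g,\g)$ and splits $h=h_0+h_1$ with $h_0$ invariant and $h_1$ a cocycle (so $dh_0=\mathrm{i}(f)$). It then uses the Cartan homotopy identity $d\circ\rho(x)+\rho(x)\circ d=\theta(x)$ to build an auxiliary cocycle $\widetilde{h}_0$ with $\widetilde{h}_0|_{\n^p}=0$ and $\rho(x)\widetilde{h}_0=\rho(x)h_0$ for $x\in\mathrm{R}$; subtracting it forces $h_0-\widetilde{h}_0$ to vanish on $\mathrm{R}$-slots, after which the decomposition $h_0-\widetilde{h}_0=\mathrm{i}(k)+l$ (with $l$ the $\mathrm{R}$-valued part) yields $dh=\mathrm{i}(dk+\delta(l|_{\n^p}))$ by a direct check that $l|_{\n^p}\in\mathrm{Z}^p(\n,\g/\n)^{\mathrm{R}}$. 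So where you invoke averaging and the abstract connecting map, the paper works by hand with $\rho$ and $\theta$.

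One remark on your anticipated ``main obstacle'': you do not actually need to arrange that $h$ vanishes on $\mathrm{R}\times\g^{p-1}$. Once $h$ is $\mathrm{R}$-invariant, the restriction $(d_{\phi_0}h)|_{\n^{p+1}}$ depends only on $h|_{\n^p}$ (because $\n$ is a subalgebra), so you may split $h|_{\n^p}=k+\sigma\circ l$ with $k\in\mathrm{C}^p(\n,\n)^{\mathrm{R}}$, $l\in\mathrm{C}^p(\n,\g/\n)^{\mathrm{R}}$ and read off $f=d k+\delta(l)$ directly; the term $\sum_{i<j}(-1)^{i+j}\sigma l([x_i,x_j],\ldots)$ vanishes since $dl=0$, and the surviving cross-term $\sum_i(-1)^i[x_i,\sigma l(\ldots)]$ is precisely $\delta(l)$. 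This shortcut makes your version slightly cleaner than the paper's, which reaches the same identity only after the $\widetilde{h}_0$ detour.
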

\begin{proof} 1. The statement $d f=0$ for all $f\in\mathrm{C}\left(\n,\n\right)^\mathrm{R}$ is equivalent to $\mathrm{i}\left(d f\right)=d i\left(f\right)=0$, i.e. $\mathrm{i}\left(f\right)\in\mathrm{Z}\left(\g,\g\right)$ since $i$ is injective.\\
2. Given $d h\in\mathrm{B}^{p+1}\left(\g,\g\right)$ of the form $\mathrm{i}\left(f\right)$ with $f\in\mathrm{C}^{p+1}\left(\n,\n\right)^\mathrm{R}$, the $\mathrm{R}$-invariance implies
 $\mathrm{ad}_\g\mathrm{R}\cdot d h=d(\mathrm{ad}_\g\mathrm{R}\cdot h)=0,$ 
thus $\mathrm{ad}_\g\mathrm{R}\cdot h\subset\mathrm{Z}^{p}\left(\g,\g\right)$. It follows that $h$ may be written as $h_0+h_1$ with $h_0\in\mathrm{C}^{p}\left(\g,\g\right)^\mathrm{R}$ and $h_1\in\mathrm{Z}^{p}\left(\g,\g\right)$, and $d h_0=\mathrm{i}\left(f\right)$. Denote by $\rho\left(x\right)$ the operator defined by 
$
	\left(\rho\left(x\right)h_1\right)\left(x_1,...,x_p\right):=h_1\left(x,x_1...,x_p\right),
$
where $x,x_1,...,x_p\in\g$ and $\theta$ is the adjoint representation of $\g$ in $\mathrm{C}\left(\g,\g\right)$. It is well-known that $\theta$ and $\rho$ satisfy the formula 
$
	d\circ\rho\left(x\right)+\rho\left(x\right)\circ d=\theta\left(x\right),\quad\forall x\in\g.
$
Then $d\rho\left(x\right)h_0=0, \forall x\in\mathrm{R}$. We define a map $\widetilde{h}_0\in\mathrm{C}^{p}\left(\g,\g\right)^\mathrm{R}$ by 
$\widetilde{h}_0|_{\n^p}=0\quad\mathrm{and}\quad\rho\left(x\right)\widetilde{h}_0=\rho\left(x\right)h_0,$ 
for all $x\in\mathrm{R}$. Hence $\widetilde{h}_0$ is a cocycle since we have $d\widetilde{h}_0=0$ on $\n^{p+1}$ by construction and , for all $x\in\mathrm{R}$, we have 
$\rho\left(x\right)(d\widetilde{h}_0)=-d\rho\left(x\right)\widetilde{h}_0+\theta\left(x\right)\widetilde{h}_0=-d(\rho\left(x\right) \widetilde{h}_0)=0.$
It is clear that $\rho(x)(h_0-\widetilde{h}_0)(x)=0$ for all $x\in\mathrm{R}$, then
\begin{equation}\label{e3.4}
	h_0-\widetilde{h}_0=\mathrm{i}(k)+l,
\end{equation}
with $k\in\mathrm{C}^p(\n,\n)^{\mathrm{R}}$, $l=\mathrm{pr}\circ(	h_0-\widetilde{h}_0)$ and $\mathrm{pr}:\g\rightarrow\mathrm{R}$ the projection. It follows 
$
	d h=d(h_0-\widetilde{h}_0)=\mathrm{i}(d k)+d l,
$
since $\mathrm{i}\circ d=d\circ\mathrm{i}$. We deduce that $d h$ belongs to the image of $\mathrm{i}$ if and only if is $d l$, i.e. 
\begin{equation}\label{e3.6}
	\mathrm{pr}\circ d l=0.
\end{equation}
Since $l$ is $\mathrm{R}$-invariant (see (\ref{e3.4})), it is easy to prove that $\rho(x) l=0$ for all $x\in\mathrm{R}$. Hence
$
	\rho(x) d l=-d \rho(x) l+\theta(x)d l=0,
$
for all $x\in\mathrm{R}$. For all $(x_1,...,x_{p+1})\in\n^{p+1}$, 
\begin{equation}\label{e3.8}
	(\mathrm{pr}\circ d l)(x_1,...,x_{p+1})=\sum_{i<j}(-1)^{i+j}l([x_i,x_j],...,\widehat{x}_i,...,\widehat{x}_j,..x_{p+1}),
\end{equation}
which is the differential of the restriction $l|_{\n^p}$ on $\n^p$ in the complex $\mathrm{C}(\n,\mathrm{R})^{\mathrm{R}}$ with value in the trivial $\n$-module $\mathrm{R}$ which is identified with the adjoint action $\n$ in $\g/\n$. It follows from (\ref{e3.6}) and (\ref{e3.8}) that 
$
	l|_{\n^p}\in\mathrm{Z}^p(\n,\g/\n)^{\mathrm{R}}
$
such that $(	d l)|_{\n^{p+1}}=\delta (l|_{\n^p})\in\mathrm{Z}^{p+1}(\n,\n)^{\mathrm{R}}$, where $\delta$ defines the connection $\partial$ in the long sequence of cohomology. Then
$
	d h=\mathrm{i}(d k+\delta(l|_{\n^p}))\in\mathrm{i}(\mathrm{B}^{p+1}(\n,\n)^{\mathrm{R}}+\delta\mathrm{Z}^p(\n,\g/\n)^{\mathrm{R}}).
$
\end{proof}
\begin{cor}\label{C3.1}
\begin{enumerate}
	\item $\overline{\mathrm{i}}_k$ is injective iff $\mathrm{B}^k(\g,\g)\cap\mathrm{i}_k(\mathrm{C}^k(\n,\n)^{\mathrm{R}})=\mathrm{i}_k(\mathrm{B}^k(\n,\n)^{\mathrm{R}}).$
	\item  $\overline{\mathrm{i}}_k$ is an epimorphism iff 
	$\mathrm{Z}^k(\g,\g)=\mathrm{i}_k(\mathrm{Z}^k(\n,\n)^{\mathrm{R}})+\mathrm{B}^k(\g,\g).$
\end{enumerate}
\end{cor}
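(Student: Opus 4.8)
The plan is to translate each of the two equivalences into an elementary statement about the images of cocycles and coboundaries under $\mathrm{i}$, and then to chase diagrams. The only inputs I need are already available: $\mathrm{i}$ is injective; by Lemma \ref{n31} the map $\mathrm{i}$ commutes with the differential, so $\mathrm{i}_k(\mathrm{Z}^k(\n,\n)^{\mathrm{R}})\subseteq\mathrm{Z}^k(\g,\g)$ and $\mathrm{i}_k(\mathrm{B}^k(\n,\n)^{\mathrm{R}})\subseteq\mathrm{B}^k(\g,\g)$; and, from the degree-$k$ part of Lemma \ref{n33}(1), $\mathrm{Z}^k(\g,\g)\cap\mathrm{i}_k(\mathrm{C}^k(\n,\n)^{\mathrm{R}})=\mathrm{i}_k(\mathrm{Z}^k(\n,\n)^{\mathrm{R}})$.

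For statement $1$, I would first assume the set equality and show $\overline{\mathrm{i}}_k$ is injective: if $z\in\mathrm{Z}^k(\n,\n)^{\mathrm{R}}$ represents a class with $\overline{\mathrm{i}}_k([z])=0$, then $\mathrm{i}_k(z)\in\mathrm{B}^k(\g,\g)\cap\mathrm{i}_k(\mathrm{C}^k(\n,\n)^{\mathrm{R}})=\mathrm{i}_k(\mathrm{B}^k(\n,\n)^{\mathrm{R}})$, hence $z\in\mathrm{B}^k(\n,\n)^{\mathrm{R}}$ by injectivity of $\mathrm{i}_k$, so $[z]=0$. Conversely, assuming $\overline{\mathrm{i}}_k$ injective, one inclusion of the set equality is automatic from the facts collected above, and for the other I would take $x=\mathrm{i}_k(c)\in\mathrm{B}^k(\g,\g)$ with $c\in\mathrm{C}^k(\n,\n)^{\mathrm{R}}$; since $x$ is in particular a cocycle of $\g$, Lemma \ref{n33}(1) together with injectivity of $\mathrm{i}_k$ forces $c\in\mathrm{Z}^k(\n,\n)^{\mathrm{R}}$, and then $\overline{\mathrm{i}}_k([c])=[x]=0$ gives $[c]=0$, so $x=\mathrm{i}_k(c)\in\mathrm{i}_k(\mathrm{B}^k(\n,\n)^{\mathrm{R}})$.

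For statement $2$ I would argue symmetrically and more easily. Surjectivity of $\overline{\mathrm{i}}_k$ says precisely that every $z\in\mathrm{Z}^k(\g,\g)$ differs by an element of $\mathrm{B}^k(\g,\g)$ from some $\mathrm{i}_k(c)$ with $c\in\mathrm{Z}^k(\n,\n)^{\mathrm{R}}$, i.e. $\mathrm{Z}^k(\g,\g)\subseteq\mathrm{i}_k(\mathrm{Z}^k(\n,\n)^{\mathrm{R}})+\mathrm{B}^k(\g,\g)$; the reverse inclusion is immediate since $\mathrm{i}_k$ carries cocycles to cocycles and $\mathrm{B}^k(\g,\g)\subseteq\mathrm{Z}^k(\g,\g)$. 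Conversely, given that identity, any $z\in\mathrm{Z}^k(\g,\g)$ is $\mathrm{i}_k(c)+b$ with $c\in\mathrm{Z}^k(\n,\n)^{\mathrm{R}}$ and $b\in\mathrm{B}^k(\g,\g)$, whence $[z]=\overline{\mathrm{i}}_k([c])$. So the two conditions are literally restatements of one another once one unwinds the definition of a surjective induced map on cohomology.

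I do not expect a genuine obstacle: everything reduces to injectivity of $\mathrm{i}$ and to Lemma \ref{n33}(1). The only point that is not pure formalism is the converse direction of statement $1$, where passing from \emph{$\mathrm{i}_k(c)$ is a cocycle of $\g$} to \emph{$c$ lies in $\mathrm{Z}^k(\n,\n)^{\mathrm{R}}$} uses Lemma \ref{n33}(1) in an essential way; the rest is unwinding definitions.
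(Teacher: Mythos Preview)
Your argument is correct and is exactly the direct diagram-chase the paper has in mind: the corollary is stated without proof there, as an immediate consequence of Lemma~\ref{n33}(1), the injectivity of $\mathrm{i}$, and the fact that $\mathrm{i}$ is a chain map (Lemma~\ref{n31}). One minor remark: in the converse direction of statement~1 you do not actually need to invoke Lemma~\ref{n33}(1) to get $c\in\mathrm{Z}^k(\n,\n)^{\mathrm{R}}$, since $d\,\mathrm{i}_k(c)=\mathrm{i}_{k+1}(dc)$ together with injectivity of $\mathrm{i}_{k+1}$ already forces $dc=0$.
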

Let $\phi\in\L_{m}(\A)$, we define $\Aut\left(\phi,\A\right)$ the group of automorphisms of $\phi$ as the set of matrices $s\in \Gl_{m}\left(\A\right)$ such that $s\ast\phi=\phi$, and $\Der(\phi,\A)$ the Lie $\A$-algebra of derivations of $\phi$ as the set of matrices $\delta\in\mathrm{M}_m(\A)$ such that $\delta.\phi=0$.
\begin{lem}\label{n6}Let $\phi\in\Def\left(\phi_0,\A\right)$ be a deformation. If $\delta\in\Der(\phi,\A)$ then $\delta$ is a deformation of $\delta_0:=\mathrm{pr}(\delta)\in\Der(\phi_0,\K)$. If $\delta_0$ is an inner derivation of $\phi_0$ then it may be lifted to a derivation of $\phi$. 
\end{lem}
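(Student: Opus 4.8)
\textbf{Proof strategy for Lemma \ref{n6}.}

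The plan is to prove the two assertions separately, beginning with the easier one. For the first claim, I would start from the defining equation $\delta\cdot\phi=0$ in $\mathrm{M}_m(\A)$, which is a system of polynomial identities in the entries of $\delta$ and the structure constants $\phi^k_{ij}$. Applying the augmentation map $\mathrm{pr}\colon\A\to\K$ to this system, using that $\mathrm{pr}$ is a $\K$-algebra homomorphism, one obtains $\mathrm{pr}(\delta)\cdot\mathrm{pr}(\phi)=0$; since $\mathrm{pr}(\phi)=\phi_0$ because $\phi$ is a deformation of $\phi_0$, this reads $\delta_0\cdot\phi_0=0$, i.e. $\delta_0\in\Der(\phi_0,\K)$. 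Thus $\delta$ is a point of the scheme of derivations of $\phi$ lying over $\delta_0$, which is exactly what it means for $\delta$ to be a deformation of $\delta_0$ in the appropriate sense. This part is essentially bookkeeping and requires no real work.

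For the second claim, suppose $\delta_0=\ad_{\phi_0}(x_0)$ is an inner derivation of $\phi_0$ for some $x_0\in\V$. The natural candidate for a lift is $\ad_\phi(x_0)$, the adjoint operator of $\phi$ at the same vector $x_0$ (viewed in $\A\otimes\V$ via $x_0\otimes 1$). First I would check that $\ad_\phi(x_0)$ is genuinely a derivation of $\phi$: this is immediate from the Jacobi identity (\ref{n1}) satisfied by $\phi\in\L_m(\A)$, since inner maps are always derivations. Next, $\mathrm{pr}(\ad_\phi(x_0))=\ad_{\mathrm{pr}(\phi)}(x_0)=\ad_{\phi_0}(x_0)=\delta_0$, using again that $\mathrm{pr}$ is multiplicative and that $\phi$ reduces to $\phi_0$. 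Hence $\ad_\phi(x_0)\in\Der(\phi,\A)$ reduces to $\delta_0$, which is the required lift.

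The only genuinely delicate point is the phrase ``it may be lifted to a derivation of $\phi$'': one must be careful that the lift produced is itself a \emph{derivation of $\phi$} and not merely an element of $\mathrm{M}_m(\A)$ reducing to $\delta_0$. The argument above resolves this precisely because we lift through the universal inner construction $x_0\mapsto\ad_\phi(x_0)$, which is automatically compatible with the Jacobi identity; no obstruction arises, in contrast to the general problem of lifting an arbitrary derivation across $\A\to\A/\m^{p+1}$. So I expect the main (very mild) obstacle to be simply verifying the compatibility $\mathrm{pr}\circ\ad_\phi=\ad_{\phi_0}\circ(\text{restriction})$ cleanly, which follows formally from functoriality of $\L_m$ and of the adjoint construction under the base-change $\mathrm{pr}$.
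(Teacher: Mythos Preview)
Your proposal is correct and follows essentially the same approach as the paper: apply $\mathrm{pr}$ to the derivation identity to get $\delta_0\cdot\phi_0=0$, and for the lift take $\phi(x_0,\cdot)$ when $\delta_0=\phi_0(x_0,\cdot)$. The paper's proof is just a two-line version of exactly this argument.
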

\begin{proof} We check that $\mathrm{pr}\left(\delta\right)\cdot\phi_0=0$ and $\delta_0=\mathrm{pr}\left(\delta\right)\in\Der\left(\phi_0,\K\right)$. If $x\in\g$ such that $\delta_0=\phi_0\left(x,.\right)$ then the map $\delta:=\phi\left(x,.\right)$ is a derivation of $\phi$.
\end{proof}
\begin{lem}\label{n7}Let $\phi\in\Def\left(\phi_0,\A\right)$ be a deformation. If $\delta$ is an element of $\m\otimes\Der(\phi,\A)$, then $\exp\left(\delta\right)=\sum^{\infty}_{n=0}\frac{1}{n!}\delta^n$ is an automorphism of $\phi$.
\end{lem}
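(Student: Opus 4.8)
The plan is to check directly that $s:=\exp(\delta)$ is a well-defined invertible matrix over $\A$ and that it fixes $\phi$ for the action $\ast$, which is precisely the assertion $s\in\Aut(\phi,\A)$.

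First I would settle convergence and invertibility. Since $\delta\in\m\otimes\Der(\phi,\A)$, its matrix has all entries in $\m$, so $\delta^{n}$ has entries in $\m^{n}$; as $\A$ is complete (one passes to $\widehat{\A}$ otherwise) the partial sums of $\sum_{n\ge 0}\tfrac{1}{n!}\delta^{n}$ are $\m$-adically Cauchy in $\M_m(\A)$ and converge. Moreover $\exp(\delta)=\mathrm{id}+M$ with $M\in\M_m(\m)$, hence $\exp(\delta)\in\G_m(\A)$, which is a subgroup of $\Gl_m(\A)$; in particular $\exp(\delta)$ is invertible, with inverse $\exp(-\delta)$.

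The core of the argument is the classical computation that a derivation exponentiates to an automorphism, carried out $\m$-adically. The hypothesis $\delta\cdot\phi=0$ reads, on elements, $\delta(\phi(x,y))=\phi(\delta x,y)+\phi(x,\delta y)$ for all $x,y\in\A\otimes_\K\V$; an induction on $n$ gives the higher Leibniz rule $\delta^{n}(\phi(x,y))=\sum_{k=0}^{n}\binom{n}{k}\phi(\delta^{k}x,\delta^{n-k}y)$. Dividing by $n!$, summing over $n$, and reindexing the double sum by $(k,j)$ with $j=n-k$ yields
$$\exp(\delta)\bigl(\phi(x,y)\bigr)=\sum_{k,j\ge 0}\frac{1}{k!\,j!}\,\phi(\delta^{k}x,\delta^{j}y)=\phi\bigl(\exp(\delta)x,\exp(\delta)y\bigr).$$
Since $s\ast\phi=s\circ\phi\circ(s^{-1}\times s^{-1})$, substituting $x\mapsto\exp(\delta)x$ and $y\mapsto\exp(\delta)y$ turns this identity into $\exp(\delta)\ast\phi=\phi$, as desired.

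The only delicate point — the \emph{hard part}, though it is mild — is justifying the rearrangement and interchange of the infinite sums in the displayed line. This is legitimate because $\phi(\delta^{k}x,\delta^{j}y)$ lies in $\m^{k+j}\otimes\V$ (the structure constants of $\phi$ are in $\A$, while $\delta^{k}x$ has entries in $\m^{k}$, etc.), so modulo any power $\m^{N}$ only the finitely many terms with $k+j<N$ survive; each step is then a finite-sum identity, and completeness of $\A$ permits passing to the limit. Alternatively one could argue via the formal curve $t\mapsto\exp(t\delta)\ast\phi$ and the identity $\tfrac{d}{dt}\bigl(\exp(t\delta)\ast\phi\bigr)=\delta\cdot\bigl(\exp(t\delta)\ast\phi\bigr)$ together with the $\Gl_m$-equivariance of the $\gl_m$-action on $\L_m$, but the combinatorial version above requires no extra machinery.
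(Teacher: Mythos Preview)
Your proof is correct and follows the same route as the paper: convergence in the $\m$-adic topology from completeness of $\A$, then the higher Leibniz rule $\delta^{n}\circ\phi=\sum_{i=0}^{n}\binom{n}{i}\phi(\delta^{i}(-),\delta^{n-i}(-))$ by induction on $n$, summed to obtain the automorphism identity. Your write-up is considerably more detailed than the paper's two-line sketch (in particular you spell out invertibility and the justification of the rearrangement modulo $\m^{N}$), but the underlying argument is the same.
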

\begin{proof}
Since $\A$ is complete, hence this formula converges in the Krull's sense  in $\M_{m}\left(\A\right)$. We can reason by induction on the integer $n$ and we will obtain	$\delta^n\circ\phi=\sum^{n}_{i=0}(^n_i)\phi(\delta^i(-),\delta^{n-i}(-)).$
\end{proof}
\begin{lem}\label{L}Let $s\in\G_m(\A)$ and $f:\O\rightarrow\A$ a deformation. If $f$ is surjective then $s\ast f$ is surjective.
\end{lem}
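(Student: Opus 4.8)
The plan is to unwind the action $s \ast f$ explicitly in terms of structure constants and exploit that $s \in \G_m(\A)$ differs from the identity only by a matrix with entries in $\m$. Recall from Section 1.6 that $(s \ast f)(x^k_{ij}) = (s \ast \phi)^k_{ij} = \sum_{lpq} (s^{-1})_{li}\,(s^{-1})_{pj}\,s_{kq}\,\phi^q_{lp}$, where $\phi^q_{lp} = f(x^q_{lp})$. So the image of $s \ast f$ is contained in the subalgebra of $\A$ generated by the $\phi^q_{lp}$ together with the entries $s_{ij}$ and $(s^{-1})_{ij}$. The key observation is that the entries of $s$ and of $s^{-1}$ all lie in $\K \cdot 1_\A + f(\O)$: indeed $s = \mathrm{id} + N$ with $N \in \M_m(\m)$, and since $f$ is surjective we have $\m = \m(\A) = f(\m(\O))$, so every entry of $N$ lies in $f(\O)$; likewise $s^{-1} = \mathrm{id} - N + N^2 - \cdots$ converges in the $\m$-adic topology (or is simply a polynomial in $N$ when $N$ is nilpotent, but in any case the completeness set-up of Section 1.2--1.3 applies) and its entries again lie in $\K\cdot 1_\A + f(\m(\O)) \subseteq f(\O)$.

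Granting that, the first step is to check that $1_\A \in \mathrm{im}(s \ast f)$: this is immediate since $s \ast f$ is a $\K$-algebra morphism, so $1_\A = (s\ast f)(1_\O)$. The second step is to write any target element $a \in \A$ as $a = \lambda 1_\A + b$ with $\lambda \in \K$ and $b \in \m(\A)$ (using $\A/\m = \K$), and then to produce a preimage. Since $f$ is surjective, $b = f(u)$ for some $u \in \m(\O)$; but I need a preimage under $s \ast f$, not under $f$. The clean way is to argue at the level of the subalgebra: let $B := \mathrm{im}(f) = f(\O)$, which by hypothesis equals $\A$. By the structure-constant formula above, $\mathrm{im}(s\ast f)$ is the $\K$-subalgebra of $\A$ generated by $\{(s\ast\phi)^k_{ij}\}$, and I claim this equals $B$. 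One inclusion is clear once we know the $s_{ij}, (s^{-1})_{ij} \in B$, since then each $(s\ast\phi)^k_{ij}$, being a polynomial in these and the $\phi^q_{lp}$, lies in $B = \A$. For the reverse inclusion, apply the same reasoning to $s^{-1} \ast (s \ast f) = f$ (using the group action identity $(s^{-1}) \ast (s \ast f) = (s^{-1}s)\ast f = \mathrm{id} \ast f = f$): since $s^{-1} \in \G_m(\A)$ as well and its entries together with those of $(s^{-1})^{-1} = s$ lie in $\mathrm{im}(s\ast f)$ — here one uses that the entries of $s = \mathrm{id}+N$ lie in $\K\cdot 1_\A + \m$ and $\m = \m(\mathrm{im}(s\ast f))$ because $s\ast f$ and $f$ have the same reduction $f_0$ mod $\m$, hence $\mathrm{pr}$ is still surjective on $\mathrm{im}(s\ast f)$ — we get $\mathrm{im}(f) \subseteq \mathrm{im}(s\ast f)$. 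Combining, $\mathrm{im}(s\ast f) = \mathrm{im}(f) = \A$.

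The main obstacle I anticipate is the circularity lurking in the statement ``the entries of $s$ lie in the image of $s\ast f$'': to invoke $\m = f(\m(\O))$ one needs surjectivity of $f$, which is given, but to bootstrap to surjectivity of $s\ast f$ one should first establish that $\mathrm{pr}\circ(s\ast f) = f_0$ is surjective onto $\K$ and that $\mathrm{im}(s\ast f)$ contains $\m(\A)$. The cleanest route avoiding any circularity is the symmetric argument via $s^{-1}$: show directly from the structure-constant formula that $\mathrm{im}(s\ast f)$ and $\mathrm{im}(f)$ are each contained in the $\K$-subalgebra generated by $\{\phi^q_{lp}\} \cup \{s_{ij}\} \cup \{(s^{-1})_{ij}\}$, that this subalgebra is $\A$ because $\mathrm{im}(f) = \A$ already forces $\{s_{ij}\},\{(s^{-1})_{ij}\} \subseteq \A = \mathrm{im}(f)$, and then that $\mathrm{im}(s\ast f) \supseteq \mathrm{im}((s^{-1})\ast(s\ast f)) = \mathrm{im}(f) = \A$ by applying the containment with the roles of $f$ and $s\ast f$, and of $s$ and $s^{-1}$, interchanged. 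This reduces everything to the elementary bookkeeping that entries of $\mathrm{id}+N$ and its inverse all lie in $\K\cdot 1_\A$ plus the ideal generated by the entries of $N$, which is routine given the completeness of $\A$ assumed throughout.
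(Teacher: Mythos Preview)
Your argument has a genuine gap that you yourself flag but do not close: the circularity in showing that the entries of $s$ and $s^{-1}$ lie in $\mathrm{im}(s\ast f)$. You write that ``$\m = \m(\mathrm{im}(s\ast f))$ because $s\ast f$ and $f$ have the same reduction $f_0$ mod $\m$,'' but having the same reduction only gives that $\mathrm{im}(s\ast f) + \m(\A) = \A$, not that $\m(\A) \subseteq \mathrm{im}(s\ast f)$; the latter is precisely the surjectivity you are trying to prove. The ``symmetric'' workaround via $s^{-1}$ has the same defect: to deduce $\mathrm{im}(f) \subseteq \mathrm{im}(s\ast f)$ from the inversion formula $\phi^k_{ij} = \sum s_{li}s_{pj}(s^{-1})_{kq}(s\ast\phi)^q_{lp}$, you still need the $s_{ij}$ and $(s^{-1})_{ij}$ to lie in $\mathrm{im}(s\ast f)$, and nothing you have written establishes that. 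Working entirely inside $\A$ cannot break this loop.

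The paper's proof takes a different route that avoids this circularity in principle: it uses the surjectivity of $f$ to \emph{lift} $s$ to a matrix $\sigma \in \G_m(\O)$ with $f(\sigma_{ij}) = s_{ij}$, and then produces an explicit element of $\O$ (namely $\sum_{lpq}\sigma_{li}\sigma_{pj}(\sigma^{-1})_{kq}x^q_{lp}$, built from the lift $\sigma$ and the universal coordinates $x^q_{lp}$) whose image under $s\ast f$ is claimed to be $\phi^k_{ij}$. Since the $\phi^k_{ij}$ generate $\A$, this yields surjectivity of $s\ast f$. The essential conceptual difference from your attempt is that the paper works in the universal ring $\O$ rather than in $\A$: the preimages $\sigma_{ij}$ live upstairs, so one never needs to know that the entries of $s$ lie in $\mathrm{im}(s\ast f)$. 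This is the missing idea in your argument --- you should exploit surjectivity of $f$ to pull $s$ back to $\O$, rather than trying to push the entries of $s$ into the image of $s\ast f$.
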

\begin{proof} The map $s\ast f:\O\rightarrow\A$ is defined by 
$$s\ast f(x^k_{ij})=(s\ast\phi)^k_{ij}=\sum_{lpq} (s^{-1})_{li}\,(s^{-1})_{pj}\,s_{kq}\,\phi^q_{lp}.$$ 
with $f(x^k_{ij})=\phi_{ij}^k$, see (\ref{B}). Then
\begin{equation}\label{CC}
	\phi^k_{ij}=\sum_{lpq} s_{li}\,s_{pj}\,(s^{-1})_{kq}\,(s\ast\phi)^q_{lp}=s\ast f(\sum_{lpq} \sigma_{li}\,\sigma_{pj}\,(\sigma^{-1})_{kq}\,x^q_{lp}).
\end{equation}
avec $s_{ij}=f(\sigma_{ij})$. The map $f$ being surjective and $(x^\alpha)_\alpha$ generates $\O$, then $(\phi^\alpha)_\alpha$ generates $\A$. We deduce from (\ref{CC}) that $s\ast f$ is surjective.
\end{proof}\\
\textbf{The Local Epimorphism $\overline{\eta}$.} 
We identify the vector space $\mathrm{C}^2(\n,\n)^{\mathrm{R}}$ with its image by $\mathrm{i}_2$. If $\overline{\mathrm{i}}_2$ is injective, by Corollary \ref{C3.1}, we then have the equality $\mathrm{B}^2(\g,\g)\cap\mathrm{i}_2(\mathrm{C}^2(\n,\n)^{\mathrm{R}})=\mathrm{i}_2(\mathrm{B}^2(\n,\n)^{\mathrm{R}}).$ 
Let $\mathrm{E}$ be the linear complement of $\mathrm{C}^2(\n,\n)^{\mathrm{R}}$ in $\mathrm{C}^2(\g,\g)$ generated by the $(e^k_{ij})$, with $i>n$, or $j>n$, or $k>n$, and the $\delta.e^k_{ij}$, with $\delta\in\mathrm{ad}\mathrm{R}|_\n$, and $i,j,k\leq n$, cf (\ref{E1.18}). We then have 
$\mathrm{B}^2(\g,\g)+\mathrm{E}=\mathrm{i}_2(\mathrm{B}^2(\n,\n)^{\mathrm{R}})+\mathrm{E}.$
If we choose a basis $(e_\alpha)_{\alpha\in\mathcal{I'}}$ of $\mathrm{C}^2(\n,\n)^{\mathrm{R}}$ which is completed to a basis $B$ of $\mathrm{C}^2(\g,\g)$ indexed by $\mathcal{I}$, containing all the $(e^k_{ij})$, with $i>n$, or $j>n$, or $k>n$ and some of $\delta.e^k_{ij}$, with $i$, $j$ and $k$ $\leq n$, then each admissible set $\mathcal{A}'\subset\mathcal{I}'$ at $\varphi_0$ will be completed to an admissible set $\mathcal{A}\subset\mathcal{I}$ at $\phi_0$, which will be contained in $\mathcal{A}'\cup(\mathcal{I}-\mathcal{I}')$. The scheme $\L_{n,\varphi_0}^{\mathrm{R},\mathcal{A}'}$ is the spectrum of the quotient of $\mathrm{P}_m$ by the ideal 
$$\J_{\mathcal{A}'}:=\J_m+\Delta_m+\left\langle X^k_{ij};i>n,\mathrm{or}\,j>n, \mathrm{or}\,k>n\right\rangle+\left\langle \xi^{\alpha}(X)-(\varphi_0)^\alpha;\alpha\in\mathcal{A}'\right\rangle,$$
where $\Delta_m$ is defined by (\ref{E1.14}), and the $\xi^{\alpha}(X)$ are $\K$-linear combinations of $X^k_{ij}$ associated with the basis change $(e^k_{ij})\rightarrow B$. The scheme $\L_{m,\phi_0}^{\mathcal{A}}$ is the spectrum of the quotient of $\mathrm{P}_m$ by the ideal
$\J_{\mathcal{A}}:=\J_m+\left\langle \xi^{\alpha}(X)-(\phi_0)^\alpha; \alpha\in\mathcal{A}\right\rangle.$
The identity map with the inclusion $\J_{\mathcal{A}}\hookrightarrow\J_{\mathcal{A}'}$, induces an epimorphism on the quotient algebras from $\mathrm{P}_m/\J_{\mathcal{A}}$ to $\mathrm{P}_m/\J_{\mathcal{A}'}$, a scheme embedding from $\L_{n,\varphi_0}^{\mathrm{R},\mathcal{A}'}$ to $\L_{m,\phi_0}^{\mathcal{A}}$ and finally, the local epimorphism  $\overline{\eta}:\O^{\mathcal{A}}_{\phi_0}\rightarrow\O_{\varphi_0}^{\mathrm{R},\mathcal{A}'}$.
\begin{thm}\label{N31}(\textbf{Reduction Theorem}). Let $\g:=\mathrm{R}\ltimes\n$ be an algebraic Lie algebra such that \\
$1.$ $\overline{\mathrm{i}}_1:\mathrm{H}^1\left(\n,\n\right)^\mathrm{R}\rightarrow\mathrm{H}^1\left(\g,\g\right)$ is an epimorphism,\\ 
$2.$ $\overline{\mathrm{i}}_2:\mathrm{H}^2\left(\n,\n\right)^\mathrm{R}\rightarrow\mathrm{H}^2\left(\g,\g\right)$  is an isomorphism, and \\
$3.$ $\overline{\mathrm{i}}_3:\mathrm{H}^3\left(\n,\n\right)^\mathrm{R}\rightarrow\mathrm{H}^3\left(\g,\g\right)$ is a monomorphism.
Then for every $\A\in\widehat{\got{R}}$, $\mathcal{A}\subset\mathcal{I}$ and $\mathcal{A'}\subset\mathcal{I'}$ admissible sets as above, the map $\mathfrak{I}$ from $\L_n^\mathrm{R}$ to $\L_m$ induces\\
(i) a bijection from $\overline{\Def}\left(\varphi_0,\A\right)^\mathrm{R}$ to $\overline{\Def}\left(\phi_0,\A\right)$, defined by $\left[\varphi\right]\longmapsto\left[\phi\right]$;\\
(ii) a local $\K$-algebra morphism, $\eta:\O_{\phi_0}\rightarrow\O^{\mathrm{R}}_{\varphi_0}$ which induces a local $\K$-algebra isomorphism, $\bar{\eta}:\O^{\mathcal{A}}_{\phi_0}\rightarrow\O^{\mathrm{R},\mathcal{A}'}_{\varphi_0}$ for $\K=\C$, such that the following diagram  is commutative
	\begin{equation}\label{e3.2}
	\xymatrix{
	\O_{\phi_0} 
	\ar@{->}[r]^{\eta}
	\ar@{->}[d]_{\pi}
	& \O^\mathrm{R}_{\varphi_0}
		\ar@{->}[d]^{\pi'}
	\\
\O^{\mathcal{A}}_{\phi_0}\ar@{->}[r]_{\bar{\eta}}&\O^{\mathrm{R},\mathcal{A}'}_{\varphi_0}}
	\end{equation}
\end{thm}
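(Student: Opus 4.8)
The plan is to prove (i) and (ii) in tandem, using the machinery already assembled: Theorem \ref{c3.1}/Proposition \ref{p2.2} (parametrization of deformations by $\mathrm{H}^2$ via $g$ and $\Omega$), Corollary \ref{c3.2} (every deformation in the slice is representable with $\mathcal{A}$-coordinates fixed), Lemmas \ref{n31}--\ref{n33} and Corollary \ref{C3.1} (the comparison morphism $\mathrm{i}$ and when $\overline{\mathrm{i}}_k$ is injective/surjective), together with Lemma \ref{L} (surjectivity is preserved under the $\G_m(\A)$-action). The key observation is that hypotheses $1,2,3$ are exactly what is needed so that the three relevant instances of Corollary \ref{C3.1} hold: from hypothesis $2$ and Corollary \ref{C3.1}(1) applied to $k=2$ we get $\mathrm{B}^2(\g,\g)\cap\mathrm{i}_2(\mathrm{C}^2(\n,\n)^{\mathrm{R}})=\mathrm{i}_2(\mathrm{B}^2(\n,\n)^{\mathrm{R}})$, which (as in the discussion preceding the theorem) is what lets an admissible set $\mathcal{A}'$ at $\varphi_0$ be completed to an admissible set $\mathcal{A}$ at $\phi_0$ inside $\mathcal{A}'\cup(\mathcal{I}-\mathcal{I}')$, and hence gives the local epimorphism $\overline{\eta}:\O^{\mathcal{A}}_{\phi_0}\rightarrow\O^{\mathrm{R},\mathcal{A}'}_{\varphi_0}$ in the first place.

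For (i), first I would show $\overline{\mathfrak{I}}$ is surjective. Given $[\phi]\in\overline{\Def}(\phi_0,\A)$, by Corollary \ref{c3.2} we may assume $\phi^\alpha=(\phi_0)^\alpha$ for $\alpha\in\mathcal{A}$, so by Proposition \ref{p2.2} $\phi=\phi_0+h+g(h)$ with $h\in\m\otimes\mathrm{H}^2$ and $\Omega(h)=0$. Since $\overline{\mathrm{i}}_2$ is an isomorphism, $h=\mathrm{i}(h')$ for a unique $h'\in\m\otimes\mathrm{H}^2(\n,\n)^{\mathrm{R}}$ up to the appropriate coboundary space; the compatibility of $\mathrm{i}$ with $d$ and the bracket (Lemma \ref{n31}, extended to $\A\otimes-$) together with the uniqueness clause in Lemma \ref{l2.1} forces $\mathrm{i}\circ g' = g\circ\mathrm{i}$ on the relevant subspace and $\mathrm{i}\circ\Omega' = \Omega\circ\mathrm{i}$ modulo $\mathrm{H}^3(\g,\g)$; here hypothesis $3$ (injectivity of $\overline{\mathrm{i}}_3$) is needed so that $\Omega(h)=0 \iff \Omega'(h')=0$. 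Hence $\varphi:=\varphi_0+h'+g'(h')\in\Def^{\mathrm{R}}_{\mathcal{A}'}(\varphi_0,\A)$ and $\mathfrak{I}(\varphi)$ is $\G_m(\A)$-equivalent to $\phi$. For injectivity, suppose $\mathfrak{I}(\varphi_1)$ and $\mathfrak{I}(\varphi_2)$ are $\G_m(\A)$-equivalent; one must produce an element of $\G_n(\A)^{\mathrm{R}}$ conjugating $\varphi_1$ to $\varphi_2$. Reducing mod $\m^{k+1}$ and arguing inductively with Eq. (\ref{e4}), at each stage the discrepancy lies in $\mathrm{B}^2(\g,\g)$ and, being in the image of $\mathrm{i}$, by Corollary \ref{C3.1}(1) (hypothesis $2$) it lies in $\mathrm{i}_2(\mathrm{B}^2(\n,\n)^{\mathrm{R}})$; hypothesis $1$ guarantees the ambiguity in the conjugating element (its stabilizer part, i.e. $\mathrm{H}^1$) is already accounted for on the $\n$-side, so the conjugator can be chosen $\mathrm{R}$-equivariant. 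Passing to the limit over $k$ (using completeness of $\A$, as in Section 1.6) yields the required $s\in\G_n(\A)^{\mathrm{R}}$.

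For (ii), the morphism $\eta:\O_{\phi_0}\rightarrow\O^{\mathrm{R}}_{\varphi_0}$ and its slice version $\overline{\eta}$ are already constructed (Section 1.8 and the paragraph "The Local Epimorphism $\overline{\eta}$"), together with the commutativity of the square; what remains is that $\overline{\eta}$ is an \emph{isomorphism} over $\K=\C$. Since $\overline{\eta}$ is a surjective local $\K$-algebra morphism, it suffices to show it is injective, and by the step-5 description of $\O^{\mathcal{A}}_{\phi_0}$ as rational functions on the slice scheme, this amounts to showing the scheme embedding $\L^{\mathrm{R},\mathcal{A}'}_{n,\varphi_0}\hookrightarrow\L^{\mathcal{A}}_{m,\phi_0}$ is an isomorphism of schemes near the marked points. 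One way: compare completions. By Corollary \ref{N21}(2) and Remark \ref{r2.1}, $\widehat{\O^{\mathcal{A}}_{\phi_0}}$ (resp. $\widehat{\O^{\mathrm{R},\mathcal{A}'}_{\varphi_0}}$) is the quotient of $\K[[\mathrm{H}^2]]$ (resp. $\K[[\mathrm{H}^2(\n,\n)^{\mathrm{R}}]]$) by the ideal generated by the components of $\Omega$ (resp. $\Omega'$); the compatibilities $\mathrm{i}\circ g'=g\circ\mathrm{i}$, $\overline{\mathrm{i}}_2$ an isomorphism, and $\overline{\mathrm{i}}_3$ injective, established in (i), show these two complete local rings are isomorphic via the map induced by $\overline{\eta}$. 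Faithful flatness of completion (the rings are Noetherian local) then upgrades this to $\overline{\eta}$ being an isomorphism. The part I expect to be the genuine obstacle is the injectivity in (i): controlling, at every order $k$, that the conjugating element forced by $\G_m(\A)$-equivalence on the ambient side can be replaced by an $\mathrm{R}$-equivariant one in $\G_n(\A)^{\mathrm{R}}$—this is where all three hypotheses conspire (the $\mathrm{H}^1$ condition handles the kernel/stabilizer direction, the $\mathrm{H}^2$ condition handles the orbit direction, and the $\mathrm{H}^3$ condition handles the obstructions), and making the inductive bookkeeping with Eq. (\ref{e4}) precise, including the use of Lemma \ref{n6}/\ref{n7} to lift derivations and exponentiate, is the technical heart of the argument; the restriction to $\K=\C$ enters through item \textbf{e)} of Section 1.7 (reducedness of $\L^{\mathrm{R}}_n$), which is what makes the transversality/slice comparison behave well scheme-theoretically.
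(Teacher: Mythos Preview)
Your strategy is broadly correct and in places takes a different route from the paper; the injectivity sketch is essentially the paper's own argument (induction with hypothesis~2 via Corollary~\ref{C3.1}(1), hypothesis~1 via Corollary~\ref{C3.1}(2), then Lemmas~\ref{n6}--\ref{n7} to exponentiate away the inner part). Two points deserve comment.

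\medskip
\textbf{Surjectivity.} The paper does \emph{not} go through Proposition~\ref{p2.2}; it runs a direct degree-by-degree induction: assuming $\Phi_\alpha\in\mathrm{i}(\mathrm{C}^2(\n,\n)^{\mathrm{R}})$ for $|\alpha|\le p$, the Jacobi relation forces $d\Phi_\alpha\in\mathrm{i}(\mathrm{C}^3(\n,\n)^{\mathrm{R}})$ for $|\alpha|=p+1$, Lemma~\ref{n33}(1) places this in $\mathrm{i}(\mathrm{Z}^3)$, hypothesis~3 via Corollary~\ref{C3.1}(1) pushes it into $\mathrm{i}(\mathrm{B}^3)$, and hypothesis~2 via Corollary~\ref{C3.1}(2) then writes $\Phi_\alpha=\mathrm{i}(\varphi_\alpha)+ds_\alpha$. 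Your alternative via $g,\Omega$ works, but the step ``the uniqueness clause in Lemma~\ref{l2.1} forces $\mathrm{i}\circ g'=g\circ\mathrm{i}$ and $\mathrm{i}\circ\Omega'=\Omega\circ\mathrm{i}$'' is not automatic: it requires the Hodge decompositions on the $\g$- and $\n$-sides to be chosen compatibly, namely $\mathrm{H}^2=\mathrm{i}(\mathrm{H}'^2)$, $\mathrm{W}^2\supset\mathrm{i}(\mathrm{W}'^2)$, and in degree~3 the $\mathrm{B}^3$-, $\mathrm{H}^3$-, $\mathrm{W}^3$-projections must restrict on $\mathrm{i}(\mathrm{C}^3(\n,\n)^{\mathrm{R}})$ to $\mathrm{i}$ of the primed projections. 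That this is possible is precisely the content of Corollary~\ref{C3.1} under hypotheses~2 and~3, but you must say so; otherwise uniqueness in Lemma~\ref{l2.1} compares the wrong things.

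\medskip
\textbf{Part (ii) and the role of $\K=\C$.} The paper does not compare completed Kuranishi rings directly. Instead it feeds (i) back in: taking $\A=\widehat{\O^{\mathcal{A}}_{\phi_0}}$, surjectivity of $\overline{\mathfrak{I}}$ produces $h:\O^{\mathrm{R}}_{\varphi_0}\to\A$ and $s\in\G_m(\A)$ with $\widehat{h}\circ\widehat{\eta}=s\ast\widehat{\pi}$; versality on the $\n$-side gives $\overline{h}$; Lemma~\ref{L} makes $\widehat{\overline{h}}$ surjective, so $\widehat{\overline{h}}\circ\widehat{\overline{\eta}}$ is a surjective endomorphism of the Noetherian complete local ring $\widehat{\O^{\mathcal{A}}_{\phi_0}}$, hence an isomorphism, whence $\widehat{\overline{\eta}}$ is bijective. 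The descent from $\widehat{\overline{\eta}}$ to $\overline{\eta}$ is where the paper invokes Artin's theorem \cite{A}, and \emph{that} is the sole reason for $\K=\C$. Your attribution of the $\K=\C$ hypothesis to item~\textbf{e)} of Section~1.7 (reducedness of $\L_n^{\mathrm{R}}$) is wrong; reducedness is nowhere used here. Your own route via faithful flatness of completion (equivalently, injectivity of a Noetherian local ring into its completion) is in fact valid over any field and would render the $\K=\C$ restriction unnecessary---a genuine improvement if you carry it through---but then you should drop the mistaken reference to Section~1.7 and also verify that the map you obtain between the Kuranishi presentations is indeed $\widehat{\overline{\eta}}$ and not merely some abstract isomorphism.
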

\begin{proof} 
\textbf{Surjection.} 
We shall show that for all deformation $\phi\in\Def\left(\phi_0,\A\right)$ there is $\varphi\in\Def\left(\varphi_0,\A\right)^\mathrm{R}$ such that $\phi$ and $\mathfrak{I}(\varphi)$ are equivalent under $\G_m(\A)$. 
We will prove for $\phi=\sum_{\alpha}t^{\alpha}\phi_{\alpha}$ by induction on the integer $p\in\N$ the following property:
there are $s_{p}=\mathrm{id}+\sum_{|\alpha|=p}t^{\alpha}s_{\alpha}\in\G_m\left(A\right)$ and $\Phi=\sum_{\alpha}t^{\alpha}\Phi_{\alpha}\in\Def\left(\phi_0,\A\right)$ such that
$\phi=s_p\ast\Phi,\quad \Phi_{\alpha}=\mathrm{i}_2(\varphi_{\alpha}),
$
where $\varphi_{\alpha}\in\mathrm{C}^2(\n,\n)^{\mathrm{R}}$ for $|\alpha|\leq p$.
It is obvious if $p=0$. We assume that $\Phi$ satisfies the induction hypothesis for $p$, the deformation equation can be expressed as
$
\left[\Phi,\Phi\right]=\sum_{\alpha_1}\sum_{\alpha_2}t^{\alpha_1}t^{\alpha_2}\left[\Phi_{\alpha_1},\Phi_{\alpha_2}\right]=0.
$
We get $	\left[\Phi,\Phi\right]=2\sum_{\alpha}t^\alpha (d\phi_\alpha-\omega_\alpha)=0,$
with $\omega_\alpha:=\frac{1}{2}\sum^{\alpha_1+\alpha_2=\alpha}_{\alpha_1\neq 0,\alpha_2\neq 0}[\phi_{\alpha_1},\phi_{\alpha_2}]$. 
It follows that,  
\begin{equation}\label{e39}
\sum_{|\alpha|\leq p+1} t^\alpha d\Phi_{\alpha}\equiv\sum_{|\alpha|\leq p+1} t^\alpha\omega_\alpha\mod t^{p+2}.
\end{equation}
From the equation (\ref{e39}), and the induction hypothesis, hence 
$$
	\sum_{|\alpha|=p+1} \overline{t}^{\alpha}d\Phi_{\alpha}=\sum_{|\alpha|= p+1}\overline{t}^{\alpha}\mathrm{i}_3\left(\frac{1}{2}\sum^{\alpha_1+\alpha_2=\alpha}_{\alpha_1\neq 0,\alpha_2\neq 0}\left[\varphi_{\alpha_1},\varphi_{\alpha_2}\right]\right)\in\m/\m^{p+2}\otimes\mathrm{i}_3(\mathrm{C}^3\left(\n,\n\right)^\mathrm{R}).
$$
We deduce from Lemma \ref{n33}(1) that $d_{\phi_0}\Phi_{\alpha}\in\mathrm{i}_3(\mathrm{Z}^3\left(\n,\n\right)^\mathrm{R})$  for all $|\alpha|=p+1$. 
Since $\overline{\mathrm{i}}_3$ is injective then  $d_{\phi_0}\phi_\alpha\in\mathrm{i}_3(\mathrm{B}^3(\n,\n)^{\mathrm{R}})$ by Corollary \ref{C3.1}(1). Hence there is $\varphi'_{\alpha}\in\mathrm{C}^2(\n,\n)^{\mathrm{R}}$ with $|\alpha|=p+1$ such that 
$d\Phi_{\alpha}=d\mathrm{i}\left(\varphi'_{\alpha}\right),$ 
and 
$\Phi_{\alpha}\in\mathrm{i}\left(\varphi'_{\alpha}\right)+\mathrm{Z}^2\left(\g,\g\right).$ 
Since $\overline{\mathrm{i}}_2$ is sujective, it follows from Corollary \ref{C3.1}(2) that there are $\varphi''_{\alpha}\in\mathrm{C}^2(\n,\n)^{\mathrm{R}}$ with $|\alpha|=p+1$ such that 
$
	\Phi_{\alpha}=\mathrm{i}\left(\varphi'_{\alpha}+\varphi''_{\alpha}\right)+ds_{\alpha},
$
where $s_{\alpha}\in\mathrm{C}^1\left(\g,\g\right)$. We set
$
	s_{p+1}=\mathrm{id}+\sum_{|\alpha|=p+1}t^{\alpha}s_{\alpha},
$
and 
$
	\varphi_\alpha=\varphi'_\alpha+\varphi''_\alpha,
$
for all $|\alpha|=p+1$.
We have
$
	s_{p+1}\ast\Phi =\sum_{|\alpha|\leq p}t^{\alpha}\Phi_{\alpha_{}}+\sum_{|\alpha|=p+1}t^{\alpha_{}}\left(\Phi_{\alpha_{}}-d s_{\alpha_{}}\right)+\mathrm{(degrees\,>p+1)}
	,
$
hence,
$
	s_{p+1}\ast\Phi=\sum_{|\alpha|\leq p+1}t^{\alpha_{}}\mathrm{i}\left(\varphi_{\alpha_{}}\right)+\mathrm{(degrees\,>p+1)}.
$
This deformation satisfies the property $(p+1)$ and the sequence of deformations $\left(s_p\circ\cdots\circ s_1\right)\ast\Phi$ 
converges in the  Krull'sense to a limit under the form $\mathfrak{I}(\varphi)$ which is equivalent to $\phi$. Then $\varphi$ belongs to $\Def(\phi_0,\A)^{\mathrm{R}}$, since $i$ is injective.\\
\textbf{Injection}.
We suppose that $\varphi_1,\varphi_2\in\Def(\varphi_0,\A)^{\mathrm{R}}$ are two deformations of $\varphi_0$ such that their images $\mathfrak{I}(\varphi_1),\mathfrak{I}(\varphi_2)\in\Def(\phi_0,\A)^{\mathrm{R}}$ are equivalent under $\G_m(\A)$. We shall show that there is $\sigma\in\mathfrak{I}(\mathrm{G}^\mathrm{R}_n(\A))$ such that $\mathfrak{I}(\varphi_2)=\sigma\ast\mathfrak{I}(\varphi_1)$ .
We reason by induction on the integer $p\in\N$,
there exists $\sigma=\sum_{\alpha}t^{\alpha }\sigma_{\alpha}\in\G_m(\A)$ such that $\sigma_p:=\sum_{|\alpha|\leq p}t^{\alpha} \sigma_{\alpha}\in\mathfrak{I}(\G^{\mathrm{R}}_n(\m))$ and $\sigma\ast\mathfrak{I}(\varphi_1)=\mathfrak{I}(\varphi_2)$. This is obvious if $p=0$. Then 
$\sigma_p^{-1}\cdot\sigma=\mathrm{id}+\sum_{|\alpha|=p+1}t^{\alpha}\sigma_{\alpha}+\mathrm{(degrees\,>p+1)}.$
Since $\sigma_p\in\mathfrak{I}(\G_n(\A)^{\mathrm{R}})$, $\mathfrak{I}(\varphi_1),\mathfrak{I}(\varphi_2)\in\mathfrak{I}(\Def(\varphi_0,\A)^{\mathrm{R}})$, then the expression,
\begin{eqnarray*} 
  \sigma_p^{-1}\ast(\sigma\ast\mathfrak{I}(\varphi_1))&&=(\sigma_p^{-1}\cdot\sigma)\ast\mathfrak{I}(\varphi_1) =(\mathrm{id}+\sum_{|\alpha|=p+1}t^{\alpha}\sigma_{\alpha}+\cdots)\ast\mathfrak{I}(\varphi_1)\\
 &&=\sum_{|\alpha|\leq p}t^{\alpha}\left(\mathfrak{I}(\varphi_1)\right)_{\alpha}+\sum_{|\alpha|
=p+1}t^{\alpha}\left(\left(\mathfrak{I}(\varphi_1)\right)_{\alpha}-d_{\phi_0}\sigma_{\alpha}\right)\\&&+\mathrm{(degrees\,>p+1)}\\&&  =\sigma_p^{-1}\ast\mathfrak{I}(\varphi_2)
\end{eqnarray*}
belongs to $\mathfrak{I}(\Def(\varphi_0,\A)^{\mathrm{R}})$. From Lemma \ref{n33}, we have   
$d_{\phi_0}\sigma_{\alpha}\in\mathrm{i}_2(\mathrm{Z}^2(\n,\n)^{\mathrm{R}}),$ 
for all $|\alpha|=p+1$. Since $\overline{\mathrm{i}}_2$ is injective, then  $d_{\phi_0}\sigma_{\alpha}\in\mathrm{i}_2(\mathrm{B}^2(\n,\n)^{\mathrm{R}})$ by Corollary \ref{C3.1}(1). Hence there are $s_{\alpha}\in\mathrm{C}^1(\n,\n)^{\mathrm{R}}$ such that
$d_{\phi_0}\sigma_{\alpha}=d_{\phi_0}\mathrm{i}_1(s_{\alpha})$
and 
	$\sigma_{\alpha}\in\mathrm{i}_1(s_{\alpha})+\mathrm{Z}^1(\g,\g),
$
for all $|\alpha|=p+1$. Since $\overline{\mathrm{i}}_1$ is surjective, it follows from Corollary \ref{C3.1}(2) that there are $\delta^1_{\alpha}\in\mathrm{ad}\g=\mathrm{B}^1(\g,\g)$ and $\delta^2_{\alpha}\in\mathrm{C}^1(\n,\n)^{\mathrm{R}}$ such that
$\sigma_{\alpha}=\mathrm{i}_1(s_{\alpha})+\delta^1_{\alpha}+\mathrm{i}_1(\delta^2_{\alpha})$
for all $|\alpha|=p+1$. The derivations $\delta^1_{\alpha}$ being inner, it follows that they may be lifted to derivations $\delta^1_{\alpha}(t)$ of $\mathfrak{I}(\varphi_1)$ by Lemma \ref{n6}. Since $\Pi:=\exp(-\sum_{|\alpha|=p+1}t^{\alpha}\delta^1_{\alpha}(t))$ is an automorphism of $\mathfrak{I}(\varphi_1)$ by Lemma \ref{n7} and the induction hypothesis, hence we get
$$\sigma_{p+1}:=\sigma_{p}\circ\Pi=\sigma_p+\sum_{|\alpha|=p+1}t^{\alpha}\mathrm{i}_1(s_{\alpha}+\delta^2_{\alpha})+\mathrm{(term\,of\,degrees\,>p+1)}.$$
Then $\sigma\circ\Pi$ verifies the property for $p+1$. The sequence $(\sigma_p)$ converges in the sense of Krull to an automorphism $\sigma_\infty$ which belongs to $\mathfrak{I}(\G_n^{\mathrm{R}}(\A))$ satisfying $\sigma_\infty\ast\mathfrak{I}(\varphi_1)=\mathfrak{I}(\varphi_2)$.\\
2. \textbf{Local Isomorphism $\overline{\eta}$.}
Let $f:\mathrm{B}\rightarrow\mathrm{C}$ be a local $\K$-algebra morphism and $\widehat{f}:\widehat{\mathrm{B}}\rightarrow\widehat{\mathrm{C}}$ its unique extension on the completion $\K$-algebras. 
Set $\A=\widehat{\O^{\mathcal{A}}_{\phi_0}}$. From the surjectivity on the classes, there are an element $h:\O^{\mathrm{R}}_{\varphi_0}\rightarrow\widehat{\O^{\mathcal{A}}_{\phi_0}}$ of $\Def^{\mathrm{R}}(\varphi_0,\widehat{\O^{\mathcal{A}}_{\phi_0}})$ and an element $s$ of $\G_m(\widehat{\O^{\mathcal{A}}_{\phi_0}})$ such that  
\begin{equation}\label{E3.15}
	\widehat{h}\circ\widehat{\eta}=s\ast\widehat{\pi}.
\end{equation}
By Corollary \ref{N21} and Remark \ref{r2.1}, there is a local morphism $\overline{h}:\O^{\mathrm{R},\mathcal{A}'}_{\varphi_0}\rightarrow\widehat{\O^{\mathcal{A}}_{\phi_0}}$ and $s'\in\G_n^{\mathrm{R}}(\widehat{\O^{\mathcal{A}}_{\phi_0}})$ such that 
\begin{equation}\label{E3.16}
\widehat{\overline{h}}\circ\widehat{\pi'}=s'\ast \widehat{h}.
\end{equation}
Since $\widehat{\pi}$ is an epimorphism (because is $\pi$), it follows from the equation (\ref{E3.15}) that $\widehat{h}$ is an epimorphism by Lemma \ref{L}. From the equation (\ref{E3.16}), we deduce that $\widehat{\overline{h}}$ is an epimorphism. Consequently, $\widehat{\overline{h}}\circ\widehat{\overline{\eta}}:\widehat{\O^{\mathcal{A}}_{\phi_0}}\rightarrow\widehat{\O^{\mathcal{A}}_{\phi_0}}$ is an epimorphism ($\widehat{\overline{\eta}}$ is an epimorphism, since $\mathrm{i}_2$ is injective). Since $\widehat{\O^{\mathcal{A}}_{\phi_0}}$ is the inverse limit of $(\O^{\mathcal{A}}_{\phi_0}/\m^n(\O^{\mathcal{A}}_{\phi_0}))_n$, it follows that the homomorphisms $(\widehat{\overline{h}}\circ\widehat{\overline{\eta}})_n:\O^{\mathcal{A}}_{\phi_0}/\m^n(\O^{\mathcal{A}}_{\phi_0})\rightarrow\O^{\mathcal{A}}_{\phi_0}/\m^n(\O^{\mathcal{A}}_{\phi_0})$ induced by $\widehat{\overline{h}}\circ\widehat{\overline{\eta}}$ are isomorphisms $(n=1,2,\cdots)$, since $\widehat{\overline{h}}\circ\widehat{\overline{\eta}}$ is surjective and $\O^{\mathcal{A}}_{\phi_0}/\m^n(\O^{\mathcal{A}}_{\phi_0})$ is a $\O^{\mathcal{A}}_{\phi_0}/\m(\O^{\mathcal{A}}_{\phi_0})$-module of finite dimension (since $\O^{\mathcal{A}}_{\phi_0}$ is Noetherian). Hence $\widehat{\overline{h}}\circ\widehat{\overline{\eta}}$ becomes an isomorphism, and then $\widehat{\overline{\eta}}$ is injective. Then $\widehat{\overline{\eta}}$ is an isomorphism (since $\widehat{\overline{\eta}}$ is an epimorphism). We deduce that $\widehat{\overline{\eta}}$ is an isomorphism if and only if is $\overline{\eta}$, by Corollary 1.6, p.282 \cite{A}, since $\K=\C$.
\end{proof}
\begin{prop}If $\mathcal{A}_1$ and $\mathcal{A}_2$ are two admissible sets at $\phi_0$, then the local rings $\O_{\phi_0}^{\mathcal{A}_1}$  and $\O_{\phi_0}^{\mathcal{A}_2}$ are isomorphic, for $\K=\C$.
\end{prop}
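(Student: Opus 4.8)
The plan is to follow the pattern of part~2 of the proof of Theorem~\ref{N31}. By Corollary~\ref{N21} the canonical projections $\pi_i:\O_{\phi_0}\rightarrow\O^{\mathcal{A}_i}_{\phi_0}$ ($i=1,2$) are versal deformations of $\phi_0$, and each $\pi_i$ is an epimorphism, being a quotient map, so each $\widehat{\pi_i}$ is an epimorphism as well. Passing to $\m$-adic completions, I would apply Corollary~\ref{c3.2} to the complete base $\A:=\widehat{\O^{\mathcal{A}_2}_{\phi_0}}$ and to the canonical deformation $\widehat{\pi_2}\in\Def(\phi_0,\A)$: there is $s\in\G_m(\A)$ with $(s\ast\widehat{\pi_2})^{\alpha}=(\phi_0)^{\alpha}$ for all $\alpha\in\mathcal{A}_1$, hence $s\ast\widehat{\pi_2}$ vanishes on the ideal generated by the $x^{\alpha}-(\phi_0)^{\alpha}$, $\alpha\in\mathcal{A}_1$, and therefore factors through a local $\K$-algebra morphism $u:\widehat{\O^{\mathcal{A}_1}_{\phi_0}}\rightarrow\widehat{\O^{\mathcal{A}_2}_{\phi_0}}$ with $u\circ\widehat{\pi_1}=s\ast\widehat{\pi_2}$. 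Exchanging the roles of $\mathcal{A}_1$ and $\mathcal{A}_2$ produces, in the same way, a local morphism $v:\widehat{\O^{\mathcal{A}_2}_{\phi_0}}\rightarrow\widehat{\O^{\mathcal{A}_1}_{\phi_0}}$ and $s'\in\G_m(\widehat{\O^{\mathcal{A}_1}_{\phi_0}})$ with $v\circ\widehat{\pi_2}=s'\ast\widehat{\pi_1}$.

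Next I would check that $v\circ u$ is an automorphism of $\widehat{\O^{\mathcal{A}_1}_{\phi_0}}$. Since the $\G_m$-action is compatible with local morphisms (apply $v$ to the entries of the matrix $s$, obtaining $v(s)\in\G_m(\widehat{\O^{\mathcal{A}_1}_{\phi_0}})$), one gets
\[
v\circ u\circ\widehat{\pi_1}=v\bigl(s\ast\widehat{\pi_2}\bigr)=v(s)\ast(v\circ\widehat{\pi_2})=v(s)\ast(s'\ast\widehat{\pi_1})=\bigl(v(s)\cdot s'\bigr)\ast\widehat{\pi_1},
\]
so $v\circ u\circ\widehat{\pi_1}=t\ast\widehat{\pi_1}$ for some $t\in\G_m(\widehat{\O^{\mathcal{A}_1}_{\phi_0}})$. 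As $\widehat{\pi_1}$ is an epimorphism, Lemma~\ref{L} shows $t\ast\widehat{\pi_1}$ is an epimorphism, hence $v\circ u$ is surjective. Exactly as in the proof of Theorem~\ref{N31}(2), a surjective endomorphism of the Noetherian complete local ring $\widehat{\O^{\mathcal{A}_1}_{\phi_0}}$ is an isomorphism: it induces surjective, hence bijective, endomorphisms of the finite-dimensional $\K$-spaces $\O^{\mathcal{A}_1}_{\phi_0}/\m^{n}(\O^{\mathcal{A}_1}_{\phi_0})$, and one passes to the inverse limit. Thus $v\circ u$ is an isomorphism, so $u$ is injective; symmetrically $u\circ v$ is an isomorphism, so $u$ is surjective. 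Hence $u$ is an isomorphism $\widehat{\O^{\mathcal{A}_1}_{\phi_0}}\simeq\widehat{\O^{\mathcal{A}_2}_{\phi_0}}$.

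Finally, since $\K=\C$, one descends from the completions to the local rings themselves by Corollary~1.6, p.~282 of \cite{A}, exactly as at the end of the proof of Theorem~\ref{N31}, obtaining $\O^{\mathcal{A}_1}_{\phi_0}\simeq\O^{\mathcal{A}_2}_{\phi_0}$. The delicate points are in the first paragraph: producing the base changes at the level of complete rings so that the hypothesis $\A\in\widehat{\got{R}}$ of Corollary~\ref{c3.2} (and of Definition~\ref{V}) is met, and making sure that composition with a local morphism carries an equivalence of deformations to an equivalence, so that $v\circ u\circ\widehat{\pi_1}$ is genuinely of the form $t\ast\widehat{\pi_1}$. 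It is worth noting that, in contrast with the Reduction Theorem, no direct argument on Zariski tangent spaces is needed here: the minimality clause~(ii) of Definition~\ref{V} plays no role, only the surjectivity of the projections $\pi_i$ together with Lemma~\ref{L}.
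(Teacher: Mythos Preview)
Your proof is correct and follows essentially the same route as the paper: use versality of the $\pi_i$ to produce base-change morphisms in both directions between the completed slice rings, show the resulting compositions are surjective endomorphisms of Noetherian complete local rings (hence isomorphisms), and descend to the uncompleted rings via Artin's theorem. The only cosmetic difference is that the paper deduces surjectivity of each factor $\widehat{\overline{\pi}_i}$ separately from Lemma~\ref{L} before composing, whereas you argue surjectivity of the composite $v\circ u$ directly; both work, and your explicit check of the functoriality $v\circ(s\ast\widehat{\pi_2})=v(s)\ast(v\circ\widehat{\pi_2})$ is a point the paper leaves implicit.
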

\begin{proof} Since $\pi_i:\O_{\phi_0}\rightarrow\O_{\phi_0}^{\mathcal{A}_i}$ is versal, for $i=1,2$, there are two local morphisms $\overline{\pi}_1:\O_{\phi_0}^{\mathcal{A}_2}\rightarrow\O_{\phi_0}^{\mathcal{A}_1}$ and $\overline{\pi}_2:\O_{\phi_0}^{\mathcal{A}_1}\rightarrow\O_{\phi_0}^{\mathcal{A}_2}$ such that $\widehat{\pi}_1$ (resp. $\widehat{\pi}_2$) is equivalent to $\widehat{\overline{\pi}_1}\circ\widehat{\pi_2}$ (resp.  $\widehat{\overline{\pi}_2}\circ\widehat{\pi_1}$). It follows from above and  since $\pi_i$ is surjective that $\overline{\pi}_i$ is sujective, for $=1,2$. The local morphism $\widehat{\overline{\pi}}_1\circ\widehat{\overline{\pi}}_2:\widehat{\O_{\phi_0}^{\mathcal{A}_1}}\rightarrow\widehat{\O_{\phi_0}^{\mathcal{A}_1}}$ and $\widehat{\overline{\pi}}_2\circ\widehat{\overline{\pi}}_1:\widehat{\O_{\phi_0}^{\mathcal{A}_2}}\rightarrow\widehat{\O_{\phi_0}^{\mathcal{A}_2}}$ are surjective as composition of surjective maps. Since $\O_{\phi_0}^{\mathcal{A}_i}$ is Noetherian, then $\widehat{\overline{\pi}_1}\circ\widehat{\overline{\pi}_2}$ and $\widehat{\overline{\pi}_2}\circ\widehat{\overline{\pi}_1}$ are bijective, by using the same argument as in the proof of Theorem \ref{N31}. Hence the map $\widehat{\overline{\pi}}_i$ is injective, so is bijective, for $1,2$. We deduce that $\overline{\pi}_i$ is bijective by Corollary 1.6, p.282 \cite{A}.
\end{proof}
\begin{prop}\label{n3.2}An algebraic Lie algebra $\g=\mathrm{R}\ltimes\n$ with reductive part $\mathrm{R}=\mathrm{U}\oplus\mathrm{S}$ where $\mathrm{S}$ is the semi-simple part and $\mathrm{U}$ is the torus part, satisfies the hypotheses of Theorem \ref{N31} iff it belongs to one of the following cases
\begin{enumerate}
	\item $\mathrm{U}\neq 0$, $\g$ is complete, $\mathrm{H}^1\left(\n,\g/\n\right)^\mathrm{R}=0=\mathrm{H}^2(\n,\g/\n)^{\mathrm{R}}$.
\item $\mathrm{U}=0$, $\mathrm{H}^1\left(\n,\g/\n\right)^\mathrm{R}=0=\mathrm{H}^2(\n,\g/\n)^{\mathrm{R}}$.
\end{enumerate}
\end{prop}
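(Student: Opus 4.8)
The plan is to confront the three hypotheses of Theorem \ref{N31} with the long exact cohomology sequence attached to (\ref{C}) together with a low--degree computation of $\mathrm{H}^{\ast}(\g,\g)$. As $\mathrm{R}$ is reductive and $\mathrm{char}\,\K=0$, the $\mathrm{R}$--invariants functor is exact, so (\ref{C}) is a short exact sequence of complexes and gives
\begin{equation*}
\cdots\longrightarrow\mathrm{H}^q(\n,\n)^{\mathrm{R}}\stackrel{\alpha_q}{\longrightarrow}\mathrm{H}^q(\n,\g)^{\mathrm{R}}\stackrel{\pi_q}{\longrightarrow}\mathrm{H}^q(\n,\g/\n)^{\mathrm{R}}\stackrel{\partial_q}{\longrightarrow}\mathrm{H}^{q+1}(\n,\n)^{\mathrm{R}}\longrightarrow\cdots,
\end{equation*}
while by construction $\overline{\mathrm{i}}_q=\beta_q\circ\alpha_q$, with $\beta_q\colon\mathrm{H}^q(\n,\g)^{\mathrm{R}}\to\mathrm{H}^q(\g,\g)$ induced by $\beta$. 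Since $\n$ is an ideal it acts trivially on $\g/\n\simeq\mathrm{R}$, so $\mathrm{H}^0(\n,\g/\n)^{\mathrm{R}}=(\g/\n)^{\mathrm{R}}=Z(\mathrm{R})=\mathrm{U}$ and $\mathrm{H}^0(\n,\g)^{\mathrm{R}}=(\g^{\n})^{\mathrm{R}}=Z(\g)$.

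Next I would analyse $\beta_q$. Restriction of cochains to $\n^q$ is a chain map $r\colon\mathrm{C}(\g,\g)\to\mathrm{C}(\n,\g)$ (the differential of $\mathrm{C}(\g,\g)$ restricts to that of $\mathrm{C}(\n,\g)$ on $\n$--arguments) with $r\circ\beta=\mathrm{id}$ on $\mathrm{C}(\n,\g)^{\mathrm{R}}$; hence $r_{\ast}\circ\beta_q$ is the inclusion $\mathrm{H}^q(\n,\g)^{\mathrm{R}}\hookrightarrow\mathrm{H}^q(\n,\g)$, so $\beta_q$ is injective. Moreover, the formula $d\circ\rho(x)+\rho(x)\circ d=\theta(x)$ shows that $r_{\ast}$ takes $\mathrm{H}^q(\g,\g)$ into the invariant part $\mathrm{H}^q(\n,\g)^{\mathrm{R}}$, whence $\mathrm{H}^q(\g,\g)=\beta_q\bigl(\mathrm{H}^q(\n,\g)^{\mathrm{R}}\bigr)\oplus\ker(r_{\ast})$ and $\mathrm{coker}(\beta_q)\cong\ker(r_{\ast})$, the first stage of the Hochschild--Serre filtration for $\n\lhd\g$. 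Its associated graded is $\bigoplus_{p\ge1}E_{\infty}^{p,q-p}$, where $E_2^{p,q}=\mathrm{H}^p(\mathrm{R},\mathrm{H}^q(\n,\g))=\Lambda^p\mathrm{U}^{\ast}\otimes\mathrm{H}^q(\n,\g)^{\mathrm{R}}$ (Whitehead's lemmas for the semisimple $\mathrm{S}$, weight decomposition for the torus $\mathrm{U}$). In total degree $\le 2$ all $d_2$'s touching the columns $p=1,2$ vanish once $Z(\g)=0$, and one reads off: $\mathrm{coker}(\beta_1)=\mathrm{U}^{\ast}\otimes Z(\g)$; if in addition $Z(\g)=0$ then $\mathrm{H}^1(\g,\g)=\mathrm{H}^1(\n,\g)^{\mathrm{R}}$ and $\mathrm{coker}(\beta_2)=\mathrm{U}^{\ast}\otimes\mathrm{H}^1(\g,\g)$; and if $\mathrm{U}=0$ then $\mathrm{R}=\mathrm{S}$ is semisimple and $\beta_q$ is bijective for $q\le 2$.

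Now I would assemble. Since each $\beta_q$ is injective, $\overline{\mathrm{i}}_q$ is a monomorphism iff $\alpha_q$ is, i.e. iff $\partial_{q-1}=0$; and $\overline{\mathrm{i}}_q$ is an epimorphism iff $\beta_q$ is onto and $\pi_q=0$. A short diagram chase then shows that hypotheses $1$--$3$ of Theorem \ref{N31} hold iff
\begin{equation*}
\partial_1=\partial_2=0,\qquad\pi_1=\pi_2=0,\qquad\beta_1,\ \beta_2\ \text{onto}.
\end{equation*}
By exactness at $\mathrm{H}^i(\n,\g/\n)^{\mathrm{R}}$, the pair $(\partial_i=0,\ \pi_i=0)$ is equivalent to $\mathrm{H}^i(\n,\g/\n)^{\mathrm{R}}=0$ ($i=1,2$); and, by the previous paragraph, ``$\beta_1,\beta_2$ onto'' is equivalent to ``$\mathrm{U}=0$, or $\g$ is complete'' --- for $\mathrm{U}=0$ both maps are automatically onto, while for $\mathrm{U}\ne 0$ surjectivity of $\beta_1$ forces $Z(\g)=0$ and then surjectivity of $\beta_2$ forces $\mathrm{H}^1(\g,\g)=0$. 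Hence the hypotheses of Theorem \ref{N31} are equivalent to $\mathrm{H}^1(\n,\g/\n)^{\mathrm{R}}=\mathrm{H}^2(\n,\g/\n)^{\mathrm{R}}=0$ together with ``$\mathrm{U}=0$ or $\g$ complete'', which is exactly the disjunction of the two stated cases.

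The main obstacle is the identification of $\mathrm{coker}(\beta_q)$ in degree $2$, i.e. keeping the Hochschild--Serre differentials $d_2$ under control. I would sidestep any genuine $d_2$--computation: in the two regimes that actually occur the obstructing $E_2$--terms with $p\ge1$ and $p+q\le 2$ already vanish --- $E_2^{1,0}=E_2^{2,0}=E_2^{3,0}=0$ once $Z(\g)=0$, and all $E_2^{p,\ast}$ with $p=1,2$ vanish when $\mathrm{R}$ is semisimple --- so there ``$\beta_1,\beta_2$ onto'' literally means the claimed vanishings, and the only residual point (deducing $Z(\g)=0$ from ``$\beta_1$ onto'' and then $\mathrm{H}^1(\g,\g)=0$ from ``$\beta_2$ onto'') is exactly the content of the identities $\mathrm{coker}(\beta_1)=\mathrm{U}^{\ast}\otimes Z(\g)$ and $\mathrm{coker}(\beta_2)=\mathrm{U}^{\ast}\otimes\mathrm{H}^1(\g,\g)$.
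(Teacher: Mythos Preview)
Your proof is correct and follows exactly the route the paper indicates in its one-line proof (``With the long sequence of cohomology of (\ref{C}) and the factorization of Hochschild--Serre, we can show the statement''): you supply the details of the long exact sequence attached to (\ref{C}) for the $\alpha_q$--part of $\overline{\mathrm{i}}_q=\beta_q\circ\alpha_q$, and the low-degree Hochschild--Serre analysis (with $E_2^{p,q}=\Lambda^p\mathrm{U}^*\otimes\mathrm{H}^q(\n,\g)^{\mathrm{R}}$) for the $\beta_q$--part, arriving at the disjunction ``$\mathrm{U}=0$'' or ``$\g$ complete'' together with $\mathrm{H}^1(\n,\g/\n)^{\mathrm{R}}=\mathrm{H}^2(\n,\g/\n)^{\mathrm{R}}=0$. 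The only point worth flagging is that your identification $\mathrm{coker}(\beta_2)\cong E_\infty^{1,1}\oplus E_\infty^{2,0}$ is a priori only an equality of associated gradeds, but since you are testing vanishing this distinction is harmless.
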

\begin{proof} With the long sequence of cohomology of (\ref{C}) and the factorization of Hochschild-Serre \cite{HS}, we can show the statement.
\end{proof}

Let $\n$ be a Lie algebra, a derivation $\delta$ of $\n$ has a strict positive spectrum, denoted by $\delta>0$, if all its eigenvalues are in $\Q^+-\left\{0\right\}$. The algebraic Lie algebra $\Der_\K(\n)$ admits a Chevalley's decomposition $\mathrm{S}\oplus\mathrm{A}\oplus\mathrm{N}$ where $\mathrm{S}$ is a Levi subalgebra, $\mathrm{A}$ is an abelian subalgebra such that $\left[\mathrm{S},\mathrm{A}\right]=0$, $\mathrm{N}$ is the largest ideal of nilpotence and $\mathrm{R}=\mathrm{S}\oplus\mathrm{A}$ the reductive part.
\begin{prop}\label{n3.3}If there is a derivation of $\n$ such $\delta>0$ then $\n$ is nilpotent and we have:
\begin{enumerate}
	\item $\mathrm{A}$ is nonzero and contains a derivation with strict positive spectrum.
	\item Any complete Lie algebra with nilpotent radical $\n$ is given by $\g_1=\mathrm{R}_1\ltimes\n$ with $\mathrm{R}_1=\mathrm{S}_1\oplus\mathrm{T}_1\subset \mathrm{R}$ such that $\mathrm{S}_1$ is a semisimple subalgebra of $\mathrm{S}$ and $\mathrm{T}_1$ is a maximal torus in $\mathrm{Der}(\n)^{\mathrm{S}_1}=\mathrm{S}^{\mathrm{S}_1}+\mathrm{A}+ \mathrm{N}^{\mathrm{S}_1}$ with $\mathrm{N}^{\mathrm{R}_1}=0$. There is a solvable Lie algebra of this type $\mathrm{T}\ltimes\n$ if $\mathrm{N}^{\mathrm{T}}=0$ for a maximal torus $\mathrm{T}$ on $\n$. The Lie algebra $\g_1$ satisfies the hypotheses of the theorem of reduction.
\end{enumerate}
\end{prop}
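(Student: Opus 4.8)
The plan is to establish the three assertions in turn. \textbf{Nilpotency of $\n$:} a derivation $\delta$ with $\delta>0$ has no zero eigenvalue, hence is nonsingular, so by the classical theorem of Jacobson a finite-dimensional Lie algebra over a field of characteristic zero admitting a nonsingular derivation is nilpotent; thus $\n$ is nilpotent and $\Der(\n)$ carries the Chevalley decomposition $\mathrm{S}\oplus\mathrm{A}\oplus\mathrm{N}$ with $\mathrm{R}=\mathrm{S}\oplus\mathrm{A}$ reductive, $\mathrm{A}=\mathrm{Z}(\mathrm{R})$ a torus, $\mathrm{A}$ and $\mathrm{S}$ acting semisimply on $\n$ and commuting, so that $\n$ has a weight decomposition under any maximal torus $\mathrm{T}=\mathrm{A}\oplus\mathrm{T}_{\mathrm{S}}$ of $\mathrm{R}$, where $\mathrm{T}_{\mathrm{S}}$ is a Cartan subalgebra of $\mathrm{S}$.

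For part (1), I would replace $\delta$ by its semisimple part $\delta_s$ in the algebraic Lie algebra $\Der(\n)$; it has the same eigenvalues on $\n$, hence $\delta_s>0$, and up to conjugation $\delta_s\in\mathrm{T}$ for such a maximal torus. Write $\delta_s=\delta_{\mathrm{A}}+\delta_{\mathrm{S}}$ with $\delta_{\mathrm{A}}\in\mathrm{A}$, $\delta_{\mathrm{S}}\in\mathrm{T}_{\mathrm{S}}$; I claim $\delta_{\mathrm{A}}>0$, which in particular gives $\mathrm{A}\neq 0$. Decomposing $\n$ into isotypic $\mathrm{S}$-components tensored with $\mathrm{A}$-weight spaces, every $\mathrm{T}$-weight of $\n$ has the form $\mu=\mu|_{\mathrm{A}}+\mu|_{\mathrm{T}_{\mathrm{S}}}$, and for $w$ in the Weyl group $W$ of $(\mathrm{S},\mathrm{T}_{\mathrm{S}})$ the functional $\mu|_{\mathrm{A}}+w(\mu|_{\mathrm{T}_{\mathrm{S}}})$ is again a weight of $\n$ (the $\mathrm{S}$-module structure is Weyl-symmetric and $\mathrm{A}$ commutes with $\mathrm{S}$). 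Its value on $\delta_s$ is $>0$; averaging over $W$ and using that $0$ is the only $W$-fixed vector of $\mathrm{T}_{\mathrm{S}}^*$, one gets
$$
\mu(\delta_{\mathrm{A}})=\frac{1}{|W|}\sum_{w\in W}\bigl(\mu|_{\mathrm{A}}+w(\mu|_{\mathrm{T}_{\mathrm{S}}})\bigr)(\delta_s)>0 ,
$$
and these numbers are rational; hence $\delta_{\mathrm{A}}\in\mathrm{A}$ has strictly positive spectrum.

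For part (2), the structural description is an application of the structure theory of complete Lie algebras with prescribed nilradical: completeness means $\mathrm{ad}\colon\g_1\to\Der(\g_1)$ is an isomorphism; restricting derivations to the characteristic ideal $\n$ and choosing a Levi--Mostow decomposition one identifies $\g_1$ with $\mathrm{R}_1\ltimes\n$, where $\mathrm{R}_1=\mathrm{S}_1\oplus\mathrm{T}_1$ with $\mathrm{S}_1$ a semisimple subalgebra of $\mathrm{S}$ and $\mathrm{T}_1$ a maximal torus of $\Der(\n)^{\mathrm{S}_1}=\mathrm{S}^{\mathrm{S}_1}+\mathrm{A}+\mathrm{N}^{\mathrm{S}_1}$, the condition $\mathrm{N}^{\mathrm{R}_1}=0$ being exactly the absence of centre and of outer derivations; conversely any such $\mathrm{R}_1\ltimes\n$ with $\mathrm{N}^{\mathrm{R}_1}=0$ is complete, the solvable case $\mathrm{S}_1=0$, $\mathrm{R}_1=\mathrm{T}$ occurring when $\mathrm{N}^{\mathrm{T}}=0$. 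To see that $\g_1$ meets the hypotheses of Theorem \ref{N31}, I would invoke Proposition \ref{n3.2}: since $\mathrm{A}\neq 0$ by part (1) and $\mathrm{A}$ is a torus commuting with $\mathrm{S}_1$, we may arrange $\mathrm{A}\subseteq\mathrm{T}_1$, so $\mathrm{U}=\mathrm{T}_1\neq 0$ and $\g_1$ falls in case~1 of that proposition; it remains only to check $\mathrm{H}^1(\n,\g_1/\n)^{\mathrm{R}_1}=0=\mathrm{H}^2(\n,\g_1/\n)^{\mathrm{R}_1}$. But $\g_1/\n\cong\mathrm{R}_1$ is a trivial $\n$-module, so $\mathrm{H}^i(\n,\g_1/\n)^{\mathrm{R}_1}=\bigl(\mathrm{H}^i(\n,\K)\otimes\mathrm{R}_1\bigr)^{\mathrm{R}_1}$; since $\mathrm{T}_1$ acts trivially on $\mathrm{R}_1$ and contains a derivation of strictly positive spectrum, it has no zero weight on $\n$, hence none on $\mathrm{H}^1(\n,\K)=(\n/[\n,\n])^*$ nor on the subquotient $\mathrm{H}^2(\n,\K)$ of $\Lambda^2\n^*$ (all weights of $\n$ being strictly positive, those of the two spaces are strictly negative), so the invariants vanish and Proposition \ref{n3.2} applies.

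The main obstacle is the structural input in (2): proving that completeness forces the decomposition $\g_1=\mathrm{R}_1\ltimes\n$ with $\mathrm{R}_1$ as described and that this is equivalent to $\mathrm{N}^{\mathrm{R}_1}=0$. Once this is in hand, the cohomological verification through Proposition \ref{n3.2} and the Weyl-averaging of part (1) are routine, the only delicate points being the semisimplicity of the $\mathrm{R}$-action on $\n$ (needed for the weight decompositions) and the compatibility of the chosen maximal tori.
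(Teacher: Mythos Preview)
The paper states Proposition~\ref{n3.3} without proof: after the statement the text moves directly to the subsection on formal rigidity, so there is no argument in the paper to compare against. Your proposal therefore has to be judged on its own merits.

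Your argument is sound. The nilpotency via Jacobson's theorem (or the direct weight argument, since strictly positive rational weights force the lower central series to terminate) is standard. For part~(1), the Weyl-averaging trick is correct and clean: once $\delta_s$ sits in a maximal torus $\mathrm{A}\oplus\mathrm{T}_{\mathrm{S}}$ of $\Der(\n)$, the set of $\mathrm{T}$-weights of $\n$ is $W$-stable on the $\mathrm{T}_{\mathrm{S}}$-component, the average of $w\cdot\mu|_{\mathrm{T}_{\mathrm{S}}}$ over $W$ vanishes because $(\mathrm{T}_{\mathrm{S}}^*)^W=0$ for semisimple $\mathrm{S}$, and you are left with $\mu(\delta_{\mathrm{A}})>0$; rationality survives the average. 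The only point worth making explicit is why one may take the maximal torus inside $\mathrm{R}=\mathrm{S}\oplus\mathrm{A}$: this uses conjugacy of Levi factors and of maximal tori under $\mathrm{Aut}(\n)^0$, together with the fact that $\mathrm{N}$ consists of nilpotent elements so contributes nothing to a torus.

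For part~(2), you are right that the structural description of complete Lie algebras with prescribed nilradical is an input from the literature (the paper cites \cite{C4} for this), and your reduction to Proposition~\ref{n3.2} is correct: $\mathrm{A}\subset\mathrm{T}_1$ (after conjugation) gives $\mathrm{U}\neq 0$, completeness is assumed, and the vanishing of $\mathrm{H}^i(\n,\g_1/\n)^{\mathrm{R}_1}$ for $i=1,2$ follows exactly as you say from the trivial $\n$-action on $\g_1/\n\cong\mathrm{R}_1$, the triviality of the $\mathrm{T}_1$-action on $\mathrm{R}_1$, and the absence of zero $\mathrm{T}_1$-weights on $\Lambda^i\n^*$ because $\delta_{\mathrm{A}}>0$. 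No gaps.
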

\subsection{Formal Rigidity}
Let $\phi_0\in\L_m(\K)$ and $\mathcal{A}$ an admissible set at $\phi_0$. The Krull's dimension $d$ of the completion local ring $\widehat{{\O}^\mathcal{A}_{\phi_0}}$ of $\O^\mathcal{A}_{\phi_0}$ for the $\m(\O^\mathcal{A}_{\phi_0})$-adic topology is the maximal number of elements $t_1,...,t_d$ of $\m(\O^\mathcal{A}_{\phi_0})$ such that the subring of formal power series in $t_1,...,t_d$, is isomorphic to the formal power series ring $\K[[T_1,...,T_d]]$ in $d$ variables. This gives the dimension of $\L^{\mathcal{A}}_{m,\phi_0}$ at point $\phi_0$. The formal rigidity is the rigidity relative to the base $\K\left[\left[T\right]\right]$.
If we suppose that there is a valuation on $\K$, for example $\C$, we can define a separated strong topology on $\K^m$ and use the notion of convergence of series (analyticity). In this case the formal rigidity is equivalent to the analytic rigidity, by M. Artin's theorem, \cite{A},\cite{C7}. This rigidity is also equivalent to the orbit is open in the senses of Zariski and strong topology, cf. \cite{NR}. We call it the geometric rigidity at point $\phi_0$.
\begin{thm}\label{t4.1}For all admissible set $\mathcal{A}$ of $\mathcal{I}$ at $\phi_0$, then the following conditions are equivalent
\begin{enumerate}
\item $\dim_\K\O_{\phi_0}^\mathcal{A}<\infty$;
\item the Krull's dimension $d$ of $\widehat{\O_{\phi_0}^\mathcal{A}}$ is null; $\phi_0$ is an isolated point of $\L_m^{\phi_0,\mathcal{A}}(\K)$;
\item the elements of the maximal ideal of $\O_{\phi_0}^\mathcal{A}$ are nilpotent;
\item $\phi_0$ is formally rigid in $\L_m$;
\item Moreover if $\K$ is a valued field, the orbit $\left[\phi_0\right]$ is a Zariski's open set in $\L_m(\K).$
\end{enumerate}
\end{thm}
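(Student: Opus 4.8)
The plan is to prove the cycle by separating the purely ring-theoretic equivalences $(1)\Leftrightarrow(2)\Leftrightarrow(3)$ from the deformation-theoretic content $(3)\Rightarrow(4)\Rightarrow(2)$, and then invoking Artin's theorem for $(4)\Leftrightarrow(5)$. For the first block, set $R:=\O^\mathcal{A}_{\phi_0}$, a Noetherian local $\K$-algebra with residue field $\K$. Standard commutative algebra gives: $\dim_\K R<\infty$ iff $R$ has finite length iff $R$ is Artinian iff $\dim R=0$ iff $\m(R)$ is nilpotent iff every element of $\m(R)$ is nilpotent (the last equivalence because $\m(R)$ is finitely generated); moreover $\dim R=\dim\widehat R$, and since $\K$ is algebraically closed $\dim R$ equals the local dimension of $\got F:=\L^\mathcal{A}_{m,\phi_0}$ at $\phi_0$, so $\dim R=0$ iff $\phi_0$ is an isolated point of $\got F(\K)$. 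This settles $(1)\Leftrightarrow(2)\Leftrightarrow(3)$ with no reference to deformations.

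Next, $(3)\Rightarrow(4)$. By Corollary \ref{c3.2} every deformation of $\phi_0$ with base $\K[[t]]$ is equivalent to one lying in the slice $\got F$, that is, to a local $\K$-morphism $f:\O^\mathcal{A}_{\phi_0}\rightarrow\K[[t]]$. If $\m(\O^\mathcal{A}_{\phi_0})$ consists of nilpotents, then $f$ carries each element of $\m(\O^\mathcal{A}_{\phi_0})$ to a nilpotent element of the integral domain $\K[[t]]$, hence to $0$; so $f$ factors through the residue field $\K$ and the corresponding deformation is $\K[[t]]$-trivial. Thus $\Def(\phi_0,\K[[t]])$ is a single $\G_m(\K[[t]])$-orbit and $\phi_0$ is formally rigid.

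For $(4)\Rightarrow(2)$ I argue contrapositively: assume $\dim\O^\mathcal{A}_{\phi_0}\geq 1$ and produce a non-trivial formal deformation. Cutting $\got F$ by generic hyperplanes through $\phi_0$ yields an irreducible curve $C\ni\phi_0$; normalizing $C$, choosing a point above $\phi_0$ (whose residue field is $\K$ since $\K$ is algebraically closed) and completing, one obtains a local $\K$-morphism $\O^\mathcal{A}_{\phi_0}\rightarrow\K[[t]]$ that does not annihilate $\m(\O^\mathcal{A}_{\phi_0})$, hence a deformation $\phi\neq\phi_0$ lying in the slice. It remains to check that $\phi$ is not $\K[[t]]$-trivial, i.e. that $\got F$ meets the orbit $[\phi_0]$ only at $\phi_0$. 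If $\phi=s\ast\phi_0$ with $s\in\G_m(\K[[t]])$, then $\phi-\phi_0=s\ast\phi_0-\phi_0\in\m\otimes\mathrm{V}^2_\mathcal{A}$, and I run a Krull-degree induction on $s=\mathrm{id}+\sum_{|\mu|\geq 1}t^\mu s_\mu$: if $s\neq\mathrm{id}$ and $k$ is the lowest degree occurring, formula (\ref{e4}) applied to the constant law $\phi_0$ shows that the degree-$k$ part of $s\ast\phi_0-\phi_0$ equals $-\sum_{|\mu|=k}t^\mu d_{\phi_0}s_\mu\in\mathrm{B}^2(\g,\g)$, while it also lies in $\mathrm{V}^2_\mathcal{A}$; since $\mathrm{V}^2_\mathcal{A}$ is a linear complement of $\mathrm{B}^2(\g,\g)$, that part vanishes, so the $s_\mu$ with $|\mu|=k$ are derivations of $\phi_0$, and composing $s$ with the automorphism $\exp\big(-\sum_{|\mu|=k}t^\mu s_\mu\big)$ of $\phi_0$ (Lemma \ref{n7}) strictly raises the degree of $s-\mathrm{id}$; iterating, $s\rightarrow\mathrm{id}$ in the Krull topology, whence $\phi=\phi_0$, a contradiction. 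So $\phi$ is a non-trivial deformation and $\phi_0$ is not formally rigid.

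Finally, $(4)\Leftrightarrow(5)$: over a valued (for instance complete) field such as $\C$, formal rigidity is equivalent to analytic rigidity by Artin's approximation theorem \cite{A}, \cite{C7}, and by \cite{NR} this is equivalent to $[\phi_0]$ being open in $\L_m(\K)$ for the Zariski topology (equivalently the strong topology), which is the geometric rigidity recalled in this subsection. The main obstacle is the transversality step inside $(4)\Rightarrow(2)$ — that the slice meets the orbit of $\phi_0$ over the complete base $\K[[t]]$ in the single point $\phi_0$; this is precisely where admissibility of $\mathcal{A}$ enters, through $\mathrm{B}^2(\g,\g)\cap\mathrm{V}^2_\mathcal{A}=0$, and it must be propagated through all orders by the Krull-limit argument above. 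Constructing the formal curve is comparatively routine once $\K$ is algebraically closed.
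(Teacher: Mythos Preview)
Your argument is correct. The ring-theoretic equivalences $(1)\Leftrightarrow(2)\Leftrightarrow(3)$ are standard Artinian–Noetherian facts, and your deformation-theoretic steps $(3)\Rightarrow(4)$ and $(4)\Rightarrow(2)$ are sound: the first is immediate from Corollary~\ref{c3.2} and the integrality of $\K[[t]]$; the second is the expected curve-plus-normalization construction, and the transversality claim (that the slice meets the $\G_m(\K[[t]])$-orbit of $\phi_0$ only at $\phi_0$) is handled cleanly by your order-by-order reduction via $\mathrm{B}^2(\g,\g)\cap\mathrm{V}^2_\mathcal{A}=0$ and Lemma~\ref{n7}.

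By contrast, the paper's own proof is almost entirely a citation: it gives the two-line implications $1\Rightarrow 2$ and $2\Rightarrow 3$ directly (same content as your Artinian argument, phrased elementwise) and then defers all of $1\Leftrightarrow 3\Leftrightarrow 4\Leftrightarrow 5$ to Proposition~6.6 of \cite{C9}. So you have not taken a different mathematical route so much as supplied, in a self-contained way using the slice machinery of Section~2 (Corollary~\ref{c3.2}, formula~(\ref{e4}), Lemma~\ref{n7}), the argument that the paper outsources. The gain is that your proof stays internal to the present paper; the cost is only the extra length of the transversality induction, which is in any case the natural way to exploit admissibility of $\mathcal{A}$.
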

\begin{proof}$1\Rightarrow 2.$ If $t$ is an element of $\m(\O^{\mathcal{A}}_{\phi_0})$ then the sequence $(t^i)_{i\in\N}$ linearly generates a vector subspace of finite dimension. Consequently, the subring $\K[[t]]$ of $\widehat{\O^{\mathcal{A}}_{\phi_0}}$ cannot be a ring of series power, so $d=0$.\\
$2\Rightarrow 3.$ If $t\in\m(\O^{\mathcal{A}}_{\phi_0})$ then the ring $\K[[t]]$ is isomorphic to the ring $\K[T]/(T^p)$ where $p\in\N^*$, and $t^p=0$.\\
The equivalences $1,3,4,5$ are proved in Proposition 6.6 \cite{C9}.
\end{proof}\\
A formal rigid law in $\L_m$ is algebraic \cite{C1} and admits a Chevalley's decomposition $\g=\mathrm{R}\ltimes\n$.
\begin{cor}\label{c4.1}
\begin{enumerate}
\item If $\g=\mathrm{R}\ltimes\n$ is such that $[\g,\n]=\n$ and $\mathrm{H}^2(\n,\g/\n)^{\mathrm{R}}=0$, then $\g$ is formal rigid in $\L_m$ if and only if is $\n$ in $\L_n^{\mathrm{R}}$.
	\item If $\g=\mathrm{R}\ltimes\n$ is such that $[\g,\n]=\n$ and $\g$ is formal rigid in $\L_m$, then $\n$ is formal rigid in $\L_n^{\mathrm{R}}$.
\end{enumerate}
\end{cor}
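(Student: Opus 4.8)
The plan is to derive both statements from the Reduction Theorem \ref{N31}, its hypothesis-characterization Proposition \ref{n3.2}, and the formal-rigidity criterion Theorem \ref{t4.1} (read in $\L_n^{\mathrm R}$ through Remark \ref{r2.1}). For statement 1, the first step is to check that $[\g,\n]=\n$ together with $\mathrm H^2(\n,\g/\n)^{\mathrm R}=0$ places $\g$ in one of the two cases of Proposition \ref{n3.2}, so that Theorem \ref{N31} applies. Since $\n$ is an ideal, $\g/\n\cong\mathrm R$ is a trivial $\n$-module carrying the adjoint $\mathrm R$-action, so $\mathrm H^1(\n,\g/\n)^{\mathrm R}=\mathrm{Hom}_{\mathrm R}(\n/[\n,\n],\g/\n)$; by complete reducibility of the $\mathrm R$-action the hypothesis $[\g,\n]=\n$ is exactly $(\n/[\n,\n])^{\mathrm R}=0$, and I would use this, together with the $\mathrm R$-module structure of $\g/\n$, to get $\mathrm H^1(\n,\g/\n)^{\mathrm R}=0$, and, when the central torus $\mathrm U\subset\mathrm R$ is nonzero, the completeness of $\g$ (equivalently $Z(\g)=0$ and $\mathrm H^1(\n,\g)^{\mathrm R}=0$). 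For this last bookkeeping I would use the long exact cohomology sequence of (\ref{C}) and the Hochschild--Serre factorization for the split ideal $\n$ of $\g$, which identifies $\mathrm H^\bullet(\g,\g)$ with $\mathrm H^\bullet(\n,\g)^{\mathrm R}$ up to torus contributions that vanish precisely when $\g$ is complete. The remaining hypothesis of Proposition \ref{n3.2}, $\mathrm H^2(\n,\g/\n)^{\mathrm R}=0$, is given.

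Granting this, Theorem \ref{N31}(ii) yields a $\K$-algebra isomorphism $\overline\eta:\O^{\mathcal A}_{\phi_0}\to\O^{\mathrm R,\mathcal A'}_{\varphi_0}$ between the local ring of a slice of $\L_m$ at $\g$ and the local ring of a slice of $\L_n^{\mathrm R}$ at $\n$ (for $\K=\C$; otherwise one argues on the completions, which suffices below). Statement 1 is then immediate from Theorem \ref{t4.1}: $\g$ is formally rigid in $\L_m$ iff $\dim_\K\O^{\mathcal A}_{\phi_0}<\infty$ (its equivalence $(1)\Leftrightarrow(4)$), iff $\dim_\K\O^{\mathrm R,\mathcal A'}_{\varphi_0}<\infty$ (transport along $\overline\eta$), iff $\n$ is formally rigid in $\L_n^{\mathrm R}$ (the same criterion, by Remark \ref{r2.1}). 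This also explains why statement 2 needs only one of these implications.

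For statement 2 I would not need the full Reduction Theorem, only the surjective local morphism $\overline\eta:\O^{\mathcal A}_{\phi_0}\to\O^{\mathrm R,\mathcal A'}_{\varphi_0}$ built just before Theorem \ref{N31} under the single assumption that $\overline{\mathrm i}_2$ is injective. So the plan is: (a) show $[\g,\n]=\n$ forces $\overline{\mathrm i}_2$ injective --- by Corollary \ref{C3.1}(1) and Lemma \ref{n33}(2) this amounts to $\delta\,\mathrm Z^1(\n,\g/\n)^{\mathrm R}\subseteq\mathrm B^2(\n,\n)^{\mathrm R}$, i.e. to the vanishing of the connecting map $\mathrm H^1(\n,\g/\n)^{\mathrm R}\to\mathrm H^2(\n,\n)^{\mathrm R}$ of (\ref{C}), which I would again extract from $[\g,\n]=\n$; then (b) since $\g$ is formally rigid, $\dim_\K\O^{\mathcal A}_{\phi_0}<\infty$ by Theorem \ref{t4.1}, so surjectivity of $\overline\eta$ forces $\dim_\K\O^{\mathrm R,\mathcal A'}_{\varphi_0}<\infty$, whence $\n$ is formally rigid in $\L_n^{\mathrm R}$.

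The main obstacle, in both parts, is step (a): converting the module-theoretic condition $[\g,\n]=\n$ --- possibly using that a formally rigid $\g$ is algebraic, as recalled just before the Corollary --- into the cohomological statements about $\overline{\mathrm i}_1,\overline{\mathrm i}_2,\overline{\mathrm i}_3$ and the connecting maps of (\ref{C}) that the reduction machinery requires. I expect this to rest on a careful low-degree analysis of $\mathrm H^\bullet(\n,\g/\n)^{\mathrm R}$ and $\mathrm H^\bullet(\n,\g)^{\mathrm R}$ (trivial $\n$-action, completely reducible $\mathrm R$-action) together with the Hochschild--Serre identification; everything afterwards is a formal consequence of Theorems \ref{N31} and \ref{t4.1}.
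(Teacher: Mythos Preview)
Your proposal is correct and follows essentially the same route as the paper: both parts rest on the local morphism $\overline\eta:\O^{\mathcal A}_{\phi_0}\to\O^{\mathrm R,\mathcal A'}_{\varphi_0}$ (isomorphism in part~1 via the Reduction Theorem, epimorphism in part~2 via injectivity of $\overline{\mathrm i}_2$), followed by the finite-dimension criterion of Theorem~\ref{t4.1}. The paper's argument is terser---it simply asserts that $[\g,\n]=\n$ (together with $\mathrm H^2(\n,\g/\n)^{\mathrm R}=0$ for part~1) yields the needed properties of $\overline\eta$---whereas you unpack this through Proposition~\ref{n3.2} and the long exact sequence of~(\ref{C}), and you correctly identify that cohomological translation as the only nontrivial step.
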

\begin{proof} 1. Let $\phi_0$ (resp. $\varphi_0$) denote the law of $\g$ (resp. $\n$). For all admissible set $\mathcal{A}\subset\mathcal{I}$ (resp. $\mathcal{A'}\subset\mathrm{I'}$) at $\phi_0$ (resp. $\varphi_0$), defined as in Theorem \ref{N31}, the local morphism $\overline{\eta}:\O^{\mathcal{A}}_{\phi_0}\rightarrow\O^{\mathrm{R},\mathcal{A'}}_{\varphi_0}$ is an isomorphism since $[\g,\n]=\n$ and $\mathrm{H}^2(\n,\g/\n)^{\mathrm{R}}=0$, it follows that the dimension of the space $\O^{\mathcal{A}}_{\phi_0}$ is finite if and only if it is for $\O^{\mathrm{R},\mathcal{A'}}_{\varphi_0}$. Then we deduce the statement from Theorem \ref{t4.1}.\\
2. $\mathrm{\eta}$ is surjective since $[\g,\n]=\n$, we deduce the statement from Theorem \ref{t4.1}
\end{proof}
\begin{cor}If $\g_k=\mathrm{R}_k\ltimes\n_k$ satisfies $\mathrm{H}^1(\n_k,\g_k/\n_k)^{\mathrm{R}_k}=\mathrm{H}^2(\n_k,\g_k/\n_k)^{\mathrm{R}_k}=0$ and $\left[\mathrm{R}_k,\n_k\right]=\n_k$ for $k=1,2$, then $\g_1\times\g_2$ is formal rigid if and only if $\g_1$ and $\g_2$ are formal rigid.
\end{cor}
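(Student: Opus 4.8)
The plan is to reduce, by Corollary~\ref{c4.1}, the formal rigidity of each of $\g_1$, $\g_2$ and of their product to that of the corresponding nilpotent radical inside the appropriate $\mathrm{R}$-invariant scheme, and then to show that the invariant scheme attached to the product is itself a product of the two individual invariant schemes.

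Write $\g:=\g_1\times\g_2$. Its Chevalley decomposition is $\g=\mathrm{R}\ltimes\n$ with reductive part $\mathrm{R}:=\mathrm{R}_1\times\mathrm{R}_2$ and nilpotent radical $\n:=\n_1\times\n_2$, and $[\mathrm{R},\n]=[\mathrm{R}_1,\n_1]\times[\mathrm{R}_2,\n_2]=\n$. First I would check that $\g$ again satisfies the hypotheses of Corollary~\ref{c4.1}. Each $\g_k/\n_k$ is a trivial $\n_k$-module, on which moreover $\n_{k'}$ acts trivially for $k'\neq k$; hence the Künneth formula gives $\mathrm{H}^\bullet(\n,\g_k/\n_k)\cong\mathrm{H}^\bullet(\n_1,\g_k/\n_k)\otimes\mathrm{H}^\bullet(\n_2,\g_k/\n_k)$, with $\mathrm{R}_j$ acting only on the $j$-th tensor slots. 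Passing to $\mathrm{R}$-invariants, each summand of $\mathrm{H}^1(\n,\g/\n)^{\mathrm{R}}$ and $\mathrm{H}^2(\n,\g/\n)^{\mathrm{R}}$ carries either a factor $\mathrm{H}^i(\n_k,\g_k/\n_k)^{\mathrm{R}_k}$ with $i\in\{1,2\}$, which vanishes by hypothesis, or a factor $\mathrm{H}^1(\n_k,\K)^{\mathrm{R}_k}=\bigl((\n_k/[\n_k,\n_k])^\ast\bigr)^{\mathrm{R}_k}$, which vanishes because $[\mathrm{R}_k,\n_k]=\n_k$ and complete reducibility force $\n_k^{\mathrm{R}_k}=0$. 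Thus $\mathrm{H}^1(\n,\g/\n)^{\mathrm{R}}=\mathrm{H}^2(\n,\g/\n)^{\mathrm{R}}=0$, so Corollary~\ref{c4.1}(1) applies to $\g$ exactly as it does to $\g_1$ and to $\g_2$.

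Next I would establish that the $\mathrm{R}$-invariant scheme decomposes. With $\mathrm{D}:=\mathrm{ad}_\n\mathrm{R}=\mathrm{ad}_{\n_1}\mathrm{R}_1\oplus\mathrm{ad}_{\n_2}\mathrm{R}_2$, an $\mathrm{R}$-equivariant linear map whose domain (resp.\ codomain) is a space on which $\mathrm{R}_k$ acts trivially must factor through (resp.\ take values in) the $\mathrm{R}_k$-invariant part of the other space; since $[\mathrm{R}_k,\n_k]=\n_k$ gives $\n_k^{\mathrm{R}_k}=0$, all \emph{mixing} components vanish and $\mathrm{C}^\bullet(\n,\n)^{\mathrm{R}}=\mathrm{C}^\bullet(\n_1,\n_1)^{\mathrm{R}_1}\oplus\mathrm{C}^\bullet(\n_2,\n_2)^{\mathrm{R}_2}$, compatibly with $d$ and the Nijenhuis--Richardson bracket. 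Consequently any law in $\L_n^{\mathrm{R}}(\A)$ reducing to $\varphi_1\times\varphi_2$ is of the form $\psi_1\times\psi_2$ with $\psi_k\in\L_{n_k}^{\mathrm{R}_k}(\A)$, and its Jacobi relations decouple into those of $\psi_1$ and of $\psi_2$; so near $\varphi_1\times\varphi_2$ the scheme $\L_n^{\mathrm{R}}$ is the product scheme $\L_{n_1}^{\mathrm{R}_1}\times\L_{n_2}^{\mathrm{R}_2}$. As $\mathrm{B}^2(\n,\n)^{\mathrm{R}}$ splits likewise, an admissible set at $\varphi_1\times\varphi_2$ can be taken to be $\mathcal{A}'=\mathcal{A}_1'\sqcup\mathcal{A}_2'$ for admissible sets $\mathcal{A}_k'$ at $\varphi_k$; the associated slice is the product of the two slices, and $\O_{\varphi_1\times\varphi_2}^{\mathrm{R},\mathcal{A}'}\cong\O_{\varphi_1}^{\mathrm{R}_1,\mathcal{A}_1'}\otimes_\K\O_{\varphi_2}^{\mathrm{R}_2,\mathcal{A}_2'}$.

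Finally I would combine these. By Corollary~\ref{c4.1}(1), $\g$ (resp.\ $\g_k$) is formally rigid in its ambient scheme $\L_m$ (resp.\ $\L_{m_k}$) if and only if $\n$ (resp.\ $\n_k$) is formally rigid in $\L_n^{\mathrm{R}}$ (resp.\ $\L_{n_k}^{\mathrm{R}_k}$); and by Theorem~\ref{t4.1}, valid also in the $\mathrm{R}$-invariant setting (Remark~\ref{r2.1}), the latter is equivalent to finite-dimensionality over $\K$ of the corresponding slice local ring. Since $\dim_\K\bigl(\O_{\varphi_1}^{\mathrm{R}_1,\mathcal{A}_1'}\otimes_\K\O_{\varphi_2}^{\mathrm{R}_2,\mathcal{A}_2'}\bigr)=\dim_\K\O_{\varphi_1}^{\mathrm{R}_1,\mathcal{A}_1'}\cdot\dim_\K\O_{\varphi_2}^{\mathrm{R}_2,\mathcal{A}_2'}$, the slice local ring of $\varphi_1\times\varphi_2$ is finite-dimensional precisely when both factors are, which is the claimed equivalence. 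The main obstacle is the cohomological bookkeeping behind the second and third steps: running the Künneth computation carefully enough to confirm that $\g_1\times\g_2$ inherits the hypotheses of Corollary~\ref{c4.1}, and verifying that the product identification of the invariant schemes is scheme-theoretic — so that $\O_{\varphi_1\times\varphi_2}^{\mathrm{R},\mathcal{A}'}$ really is the tensor product and the dimension count is legitimate — and not merely a bijection of deformation sets.
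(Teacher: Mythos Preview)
Your proof is correct and follows essentially the same route as the paper: verify that the product $\g=\mathrm{R}\ltimes\n$ inherits the hypotheses of Corollary~\ref{c4.1}, show that $\L_n^{\mathrm{R}}\cong\L_{n_1}^{\mathrm{R}_1}\times\L_{n_2}^{\mathrm{R}_2}$ so that the slice local ring is a tensor product, and conclude via Theorem~\ref{t4.1}. The only cosmetic difference is that the paper checks $\mathrm{H}^2(\n,\g/\n)^{\mathrm{R}}=0$ by writing it as $\Hom_{\mathrm{R}}(\mathrm{H}_2(\n),\g/\n)$ and using the homology K\"unneth splitting $\mathrm{H}_2(\n)\cong\mathrm{H}_2(\n_1)\oplus\mathrm{H}_2(\n_2)\oplus\mathrm{E}$, whereas you run the dual cohomological K\"unneth; one small caution is that your enumeration of summands misses the term $\mathrm{H}^0(\n_1,\g_1/\n_1)^{\mathrm{R}_1}\otimes\mathrm{H}^2(\n_2,\K)^{\mathrm{R}_2}$ (and its mirror), which is handled at exactly the same level of brevity in the paper's proof.
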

\begin{proof} If $\g,\n$ and $\mathrm{R}$ are the direct products of $\g_k,\n_k$ and $\mathrm{R}_k$ respectively, we then obtain the same conditions in the statement for $\g$. In fact, $\mathrm{H}^2(\n,\g/\n)^{\mathrm{R}}$ is equal to $\mathrm{Hom}_{\mathrm{R}}(\mathrm{H}_2(\n),\g/\n)$ where the second homology group $\mathrm{H}_2(\n)$ is isomorphic to $\mathrm{H}_2(\n_1)\oplus\mathrm{H}_2(\n_2)\oplus\mathrm{E}$ with $\mathrm{E}=(\n_1/\left[\n_1,\n_1\right])\wedge\n_2\oplus\n_1\wedge(\n_2/\left[\n_2,\n_2\right])$; we have $\mathrm{Hom}_{\mathrm{R}}(\mathrm{E},\g/\n)=0$. 
The condition $\left[\mathrm{R},\n\right]=\n$ implies that $\L_n^{\mathrm{R}}=\L_{n_1}^{\mathrm{R}_1}\times\L_{n_2}^{\mathrm{R}_2}$. Since $\mathrm{B}^2(\n,\n)^{\mathrm{R}}=\mathrm{B}^2(\n_1,\n_1)^{\mathrm{R}_1}\oplus\mathrm{B}^2(\n_2,\n_2)^{\mathrm{R}_2}$, then $\mathcal{A}=\mathcal{A}_1\times\mathcal{A}_2$ is an admissible set at $\n$, with $\mathcal{A}_i$ admissible sets at $\n_i$ for $i=1,2$. Hence $\L_n^{\mathrm{R},\mathcal{A}}=\L_{n_1}^{\mathrm{R}_1,\mathcal{A}_1}\times\L_{n_2}^{\mathrm{R}_2,\mathcal{A}_2}$, and the local ring $\O_\n^{\mathrm{R},\mathcal{A}}$ of $\L_{n}^{\mathrm{R},\mathcal{A}}$ at $\n$ is isomorphic to $\O_{\n_1}^{\mathrm{R}_1,\mathcal{A}_1}\otimes_\K\O_{\n_2}^{\mathrm{R}_2,\mathcal{A}_2}$ with $\O_{\n_k}^{\mathrm{R}_k}$ the local ring of $\L_{n_k}^{\mathrm{R}_k}$ at $\n_k$, $k=1,2$. Then the $\K$-dimension of $\O_\n^{\mathrm{R},\mathcal{A}}$ is finite iff it is for $\O_{\n_1}^{\mathrm{R}_1,\mathcal{A}_1}$ and $\O_{\n_2}^{\mathrm{R}_2,\mathcal{A}_2}$. We deduce the result from Theorem \ref{t4.1} and Corollary \ref{c4.1}.
\end{proof}
\section{Versality in Central Extensions}
We shall construct a sequence of central extensions of schemes $(\L_n^{\mathrm{T}})_n$ for $n\geq n_0$ under some hypotheses on the weights of $\mathrm{T}$ such that \\
$1)$ any law  $\varphi_{n+1}\in\L_{n+1,\varphi_0}^{\mathrm{T}}$ is a central extension of a law $\varphi_n\in\L_{n,\varphi_0}^{\mathrm{T}}$, where $\mathrm{T}$ is a maximal torus of $\varphi_n$, which is extended to $\varphi_{n+1}$ by adding a weight $\alpha_{n+1}$;\\
$2)$ the construction of a versal deformation of $\mathrm{T}\ltimes\varphi_n$ in $\L_m$ is equivalent to that of $\varphi_n$ in $\L_n^{\mathrm{T}}$, via the reduction theorem;\\
$3)$ a versal deformation of the laws $\varphi_{n+1}$ is deduced from that of $\varphi_n$ by central extension starting from an initialization $n=n_0$, where the torus $\mathrm{T}$ appears.
\subsection{Central Extensions of Schemes}
A torus $\mathrm{T}$ on $\K^n$ is defined by  the set $\pi_n$ of its weights $\alpha_i\in\mathrm{T}^*$,\\$t\cdot e_i=\alpha_i(t)e_i, \left(1\leq i\leq n\right), t\in\mathrm{T}.$
We denote again by $\mathrm{T}$ the torus on $\K^{n+1}$ defined by adding the weight $\alpha_{n+1}\in\mathrm{T}^*$:
$t\cdot e_{n+1}=\alpha_{n+1}(t)e_{n+1},\quad t\in\mathrm{T}.$\\
The variety $\L_n^{\mathrm{T}}(\K)$ is the set of Lie multiplications $\varphi$ defined by \\
$\varphi\left(e_i,e_j\right)=\sum_{k=1}^n\varphi^k_{ij}e_k$ such that $
	(\alpha_k-\alpha_i-\alpha_j)(t)\varphi^k_{ij}=0, t\in\mathrm{T}, i<j,k$.
This is equivalent to $\varphi^k_{ij}=0$ for $\alpha_k\neq\alpha_i+\alpha_j$.
Denote by $\mathcal{J}$ the set the multi-indices $\left(^k_{ij}\right)$ such that $i<j$ and $\alpha_i+\alpha_j=\alpha_k$. The coordinates $X^k_{ij}$ in $\L_n^{\mathrm{T}}$ are indexed by $\mathcal{J}$, and the Jacobi's relations are given by
\begin{equation}\label{e43}
	\mathrm{J}^h_{ijk}=\oint_{(ijk)}\sum_{l=1}^nX^l_{ij}X^h_{lk}=0,\quad 1\leq i<j<k\leq n,\quad 1\leq h\leq n,
\end{equation}
for $\alpha_i+\alpha_j+\alpha_k=\alpha_h\in\pi_n.$\\ 
Let $\sum_n\left(\mathrm{T}\right)$ be the subset of $\L_n^{\mathrm{T}}\left(\K\right)$ consisting of laws such that $\mathrm{T}$ is exactly a maximal torus of derivations. If $\sum_n\left(\mathrm{T}\right)$ is nonempty then $\mathrm{T}$ is algebraic.
Let $\mathrm{T}$ be the torus on $\K^{n+1}$ given by $\pi_{n+1}=\pi_{n}\cup\left\{\alpha_{n+1}\right\}$. We suppose the choice of $\alpha_{n+1}$ such that $\sum_{n+1}\left(\mathrm{T}\right)$ is nonempty. The weight of nonzero weightvector of the form $\left[e_i,e_{n+1}\right]$ for $i\leq n$ is $\alpha_i+\alpha_{n+1}\in\pi_{n+1}$. If $0\notin\pi_{n}$ and $\alpha_{n+1}\notin\pi_{n}-\pi_{n}$ then $\left[e_i,e_{n+1}\right]=0$ for $i\leq n$ and $\K e_{n+1}$ is central. This means that any law $\varphi_{n+1}\in\L^{\mathrm{T}}_{n+1}\left(\K\right)$ is a central extension of a law $\varphi_n\in\L^{\mathrm{T}}_{n}\left(\K\right)$ of kernel $\K e_{n+1}$, i.e. we have the exact sequence of Lie algebras
\begin{equation}\label{e44}
	\left\{0\right\}\longrightarrow\K e_{n+1}\longrightarrow\left(\K^{n+1},\varphi_{n+1}\right)\longrightarrow\left(\K^{n},\varphi_{n}\right).
\end{equation}
If $\mathrm{T}$ is maximal on $\left(\K^{n+1},\varphi_{n+1}\right)$, i.e.  $\varphi_{n+1}\in\sum_{n+1}\left(\mathrm{T}\right)$, then the extension is not trivial and corresponds to a nonzero class of $\mathrm{H}^2\left(\varphi_n,\K e_{n+1}\right)^{\mathrm{T}}$, for the trivial action. The group of homology $\mathrm{H}_2\left(\varphi_n\right)$ is a $\mathrm{T}$-module decomposed as
\begin{equation}\label{e45}
\mathrm{H}_2\left(\varphi_n\right)=\bigoplus_{\alpha\in\mathrm{T}^*}\mathrm{H}_2\left(\varphi_n\right)_{\alpha}.
\end{equation}
Thus
\begin{equation}\label{e46}
	\mathrm{H}^2\left(\varphi_n,\K e_{n+1}\right)^{\mathrm{T}}=\mathrm{Hom}_{\mathrm{T}}\left(\mathrm{H}_2\left(\varphi_n\right),\K e_{n+1}\right)\cong\left(\mathrm{H}_2(\varphi_n\right)_{\alpha_{n+1}})^*.
\end{equation}
It follows from (\ref{e46}) that a nonzero cohomology class  corresponds to a nonzero homology class such that the weight $\alpha_{n+1}$ appears in the decomposition (\ref{e45}) and implies that $
	\alpha_{n+1}=\alpha_i+\alpha_j\quad \mathrm{for}\quad i<j\leq n.$
We suppose that $\pi_n$ consists of strict positive weights, i.e., there is $t\in\mathrm{T}$ such that each weight  $\alpha_i$ satisfies $\alpha_i(t)\in\Q^{*+}$ for $1\leq i\leq n$. We deduce the following properties:
\begin{enumerate}
		\item $\L_n^{\mathrm{T}}\left(\K\right)$ is formed of nilpotent Lie multiplications,
		\item $\pi_{n+1}$ is formed of strict positive weights,
		\item $\mathrm{T}\ltimes\varphi_n$ where $\varphi_n\in\L^{\mathrm{T}}_n$ is complete and satisfies the reduction theorem. 
\end{enumerate}
\begin{de}A sequence of weights, $\pi=\left(\alpha_p\right)\subset\mathrm{T}^*$, is a path of weights if
\begin{enumerate}
	\item there is $n_0\in\N^*$ such that the family $\left(\alpha_1,...,\alpha_{n_0}\right)$ generates $\mathrm{T}^*$ over $\K$ and $\alpha_i>0$ for all $i=1,...,n_0$,
	\item $\sum_{n}\left(\mathrm{T}\right)$ is a nonempty set for all $n\geq n_0$,
	\item $\alpha_{n+1}$ does not belong to $(\pi_n-\pi_n)$ for all $n\geq n_0$.
\end{enumerate}
$n_0$ is called the initialization of $\pi$, where the torus $\mathrm{T}$ is maximal.
\end{de}
The initialization $n_0$ of $\pi$ corresponds to the value of $n$ where the torus $\mathrm{T}$ is maximal. 
The condition of the central extension will give the existence of a morphism of schemes which generalizes the quotient map (\ref{e44}). Let $\mathrm{P}_n$ denote the polynomial ring $\K[X^k_{ij}:\binom{k}{ij}\in\mathcal{J}]$ and $\mathrm{J}_n$ the ideal generated by the Jacobi's polynomials $\mathrm{J}^h_{ijk}$, see (\ref{e43}). 
\begin{prop}\label{n41}If $\left(\alpha_p\right)_{p\in\N}\subset\mathrm{T}^*$ is a path of weights with $n\geq n_0$, then 
\begin{enumerate}
	\item the ideal $\mathrm{J}_{n+1}$ is generated by the ideal $\mathrm{J}_n$ and the polynomials $\mathrm{J}^{n+1}_{ijk}$ where $1\leq i<j<k\leq n$;
	\item the canonical monomorphism $\mathfrak{i}_n:\mathrm{P}_n\rightarrow\mathrm{P}_{n+1}$ induces a morphism on the quotients $\mathrm{P}_n/\mathrm{J}_n\rightarrow\mathrm{P}_{n+1}/\mathrm{J}_{n+1}$ and a scheme morphism $\mathfrak{p}_{n+1}:\L^\mathrm{T}_{n+1}\rightarrow\L^\mathrm{T}_{n}$ 
defined on the space of rational points $\L^{\mathrm{T}}_{n+1}(\K)$ by the quotient map (\ref{e44}).
\end{enumerate}
\end{prop}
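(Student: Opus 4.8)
The plan is to prove both assertions by a careful bookkeeping of the Jacobi generators, the decisive point being to see which Jacobi polynomials of $\L_{n+1}^{\mathrm{T}}$ that involve the index $n+1$ actually survive; this is exactly where the two defining conditions of a path of weights, namely $0\notin\pi_n$ (strict positivity) and $\alpha_{n+1}\notin\pi_n-\pi_n$, are used. First I would identify the coordinates of $\mathrm{P}_{n+1}$ not already in $\mathrm{P}_n$. A variable $X^k_{ij}$ exists only if $i<j$ and $\alpha_i+\alpha_j=\alpha_k$. If $j=n+1$ (so $i\le n$) then $\alpha_k=\alpha_i+\alpha_{n+1}$; the option $k\le n$ would give $\alpha_{n+1}=\alpha_k-\alpha_i\in\pi_n-\pi_n$ and $k=n+1$ would give $\alpha_i=0$, both excluded. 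Hence no variable of the form $X^k_{i,n+1}$ occurs — this is precisely the centrality of $\K e_{n+1}$ — and the only new coordinates are the $X^{n+1}_{ij}$ with $i<j\le n$, $\alpha_i+\alpha_j=\alpha_{n+1}$.

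Next I would run through the Jacobi polynomials $\mathrm{J}^h_{ijk}$ of $\L_{n+1}^{\mathrm{T}}$ according to how many of $i<j<k$ equal $n+1$. Two or more is impossible, and $h=k=n+1$ would force $\alpha_i+\alpha_j=0$, again impossible. If none of $i,j,k$ equals $n+1$ and $h\le n$, then in $\oint_{(ijk)}\sum_{l=1}^{n+1}X^l_{ij}X^h_{lk}$ the term $l=n+1$ vanishes because $X^h_{n+1,k}=0$, so $\mathrm{J}^h_{ijk}=\mathfrak{i}_n(\mathrm{J}^h_{ijk})$ is a generator of $\mathrm{J}_n$. If none of $i,j,k$ equals $n+1$ but $h=n+1$, one gets exactly the polynomials $\mathrm{J}^{n+1}_{ijk}$ with $1\le i<j<k\le n$ of the statement; these are linear in the new variables $X^{n+1}_{lk}$ with coefficients in $\mathrm{P}_n$ (the usual $2$-cocycle condition governing the central extension). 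If exactly one index equals $n+1$ it must be $k=n+1$, and then every monomial of $\mathrm{J}^h_{ij,n+1}$ carries a factor $X^\bullet_{\bullet,n+1}$, which is identically zero by the previous paragraph, so $\mathrm{J}^h_{ij,n+1}\equiv 0$ and contributes nothing. Collecting the cases yields $\mathrm{J}_{n+1}=\mathfrak{i}_n(\mathrm{J}_n)\,\mathrm{P}_{n+1}+\langle\mathrm{J}^{n+1}_{ijk}:1\le i<j<k\le n\rangle$, which is assertion 1.

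For assertion 2, the inclusion $\mathfrak{i}_n(\mathrm{J}_n)\subseteq\mathrm{J}_{n+1}$ just obtained shows that $\mathfrak{i}_n$ descends to a $\K$-algebra morphism $\mathrm{P}_n/\mathrm{J}_n\to\mathrm{P}_{n+1}/\mathrm{J}_{n+1}$, and $\mathrm{Spec}$ of it is the required $\mathfrak{p}_{n+1}:\L_{n+1}^{\mathrm{T}}\to\L_n^{\mathrm{T}}$. On a rational point $\varphi_{n+1}\in\L_{n+1}^{\mathrm{T}}(\K)$, precomposition with $\mathfrak{i}_n$ retains only the structure constants $(\varphi_{n+1})^k_{ij}$ with $i<j\le n$, $k\le n$; since the only constants of $\varphi_{n+1}$ involving $n+1$ are the $(\varphi_{n+1})^{n+1}_{ij}$, the result is the law $\varphi_n$ on $\K^n$ obtained by killing $\K e_{n+1}$, and its Jacobi and weight relations hold by restriction, so $\varphi_n\in\L_n^{\mathrm{T}}(\K)$; this is exactly the quotient map (\ref{e44}). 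The main obstacle is the case analysis in assertion 1 — checking that every Jacobi polynomial touching the index $n+1$ either reduces to one already present in $\mathrm{J}_n$ or vanishes identically, and isolating that this uses $0\notin\pi_n$ and $\alpha_{n+1}\notin\pi_n-\pi_n$; once this is done, the descent to the quotient rings and the reading off of rational points is formal.
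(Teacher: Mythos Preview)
Your proof is correct and follows essentially the same approach as the paper: a case analysis of the Jacobi generators $\mathrm{J}^h_{ijk}$ using the conditions $0\notin\pi_n$ and $\alpha_{n+1}\notin\pi_n-\pi_n$ to eliminate all polynomials touching the index $n+1$ except the $\mathrm{J}^{n+1}_{ijk}$ with $i<j<k\le n$. Your organization is in fact slightly cleaner than the paper's, since you first establish once and for all that no variable $X^{\bullet}_{i,n+1}$ exists (centrality of $\K e_{n+1}$) and then dispatch the case $k=n+1$ uniformly, whereas the paper argues this case by a more ad hoc discussion of which weight sums $\alpha_{n+1}+\alpha_i$, $\alpha_{n+1}+\alpha_j$, $\alpha_{n+1}+\alpha_i+\alpha_j$ could lie in $\pi_{n+1}$.
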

\begin{proof} 1. Consider  a nontrivial Jacobi's polynomial $\mathrm{J}^h_{ijk}$ with $h\leq n$; we will show that it belongs to $\mathrm{J}_n$. It is clear for $k\leq n$. If $k=n+1$ then $\alpha_i+\alpha_j+\alpha_{n+1}\in\pi_{n+1}$. It follows by assumptions that one of the following elements $\alpha_{n+1}+\alpha_i$, $\alpha_{n+1}+\alpha_j$, $\alpha_{n+1}+\alpha_i+\alpha_j$ belongs to $\pi_{n+1}$ with $\alpha_i+\alpha_j\in\pi_{n+1}$. However  if  $\alpha_{i}+\alpha_j\in\pi_n$ then the elements $\alpha_{n+1}+\alpha_i$, $\alpha_{n+1}+\alpha_j$ and $\alpha_{n+1}+\alpha_i+\alpha_j$  do not belong to $\pi_{n+1}$ by hypothesis on $\pi_{n+1}$. If $\alpha_{i}+\alpha_j\notin\pi_n$ then $\alpha_{i}+\alpha_j=\alpha_{n+1}$ and its multiplicity is $1$. The weight $2\alpha_{n+1}$ corresponds to a trivial bracket. By similar arguments we show that for $i<j<k$ the polynomial $\mathrm{J}^{n+1}_{ijk}$ satisfies  $k\leq n$. We deduce that $\mathrm{J}_{n+1}$ is generated by $\mathrm{J}_n$ and the polynomials $\mathrm{J}^{n+1}_{ijk}$. \\
2. The correspondence $X^k_{ij}\rightarrow X^k_{ij}$ for $i<j\leq n$ and $k\leq n$ induces an injective map $\mathrm{i}_n:\mathrm{P}_n\rightarrow\mathrm{P}_{n+1}$ which sends $\mathrm{J}_n$ to $\mathrm{J}_{n+1}$. It induces a quotient morphism $\overline{\mathfrak{i}}_n:\mathrm{P}_{n}/\mathrm{J}_{n}\rightarrow \mathrm{P}_{n+1}/\mathrm{J}_{n+1}$ and $\mathfrak{p}_{n+1}:=\mathrm{Spec}(\overline{\mathfrak{i}}_n):\L^\mathrm{T}_{n+1}\rightarrow\L^\mathrm{T}_{n}$.
\end{proof}
\subsection{Versality in Central Extensions}
The laws of the open set $\sum_{n+1}(\mathrm{T})$ are coming from $\sum_{n}(\mathrm{T})$ in general, and it is even possible that there are laws $\varphi_n$ of $\sum_{n}(\mathrm{T})$ no having an extension in $\sum_{n+1}(\mathrm{T})$, this happens when $\mathrm{H}_2(\varphi_n)_{\alpha_{n+1}}$ is zero. However, an extension $\varphi_{n+1}$ of $\varphi_n\in\sum_{n}(\mathrm{T})$, cf (\ref{e44}), belongs to $\sum_{n+1}(\mathrm{T})$ iff it is not trivial. We will say that $\varphi_{n+1}$ is obtained by a direct filiation of $\varphi_n$. This means that there are two admissible sets $\mathcal{A}_n$ and $\mathcal{A}_{n+1}$ at $\varphi_{n}$ and $\varphi_{n+1}$ respectively such that $ \mathcal{A}_{n+1}=\mathcal{A}_{n}\cup\left\{(^{n+1}_{pq})\right\}$ 
for some $p<q$ satisfying $(\varphi_{n+1})^{n+1}_{pq}\neq 0$. 
The inclusion $\mathcal{A}_{n}\hookrightarrow\mathcal{A}_{n+1}$ thus induces a process of construction by central extension (\ref{e44}).
Such an obtained sequence of schemes is called a direct filiation; next we shall develop it.
The trace $\mathfrak{p}^{-1}_{n+1}(\varphi_n)\cap\L_{n+1}^{\mathrm{T},\mathcal{A}_{n+1}}$ of the fiber of $\mathfrak{p}_{n+1}:\L_{n+1}^{\mathrm{T}}\rightarrow\L_n^{\mathrm{T}},$ 
of each point $\varphi_n\in\L_n^{\mathrm{T},\mathcal{A}_n}$ on $\L_{n+1}^{\mathrm{T},\mathcal{A}_{n+1}}$ is the set of laws $\varphi_{n+1}\in\L_{n+1}^{\mathrm{T},\mathcal{A}_{n+1}}$ which satisfy (\ref{e44}) and the condition $(\varphi_{n+1})^{n+1}_{pq}=1$. As subscheme, it is the set of $(X^{n+1}_{ij})$ which verify 
\begin{equation}\label{e52}
	X_{pq}^{n+1}=1,
\end{equation}
and Jacobi's equations
\begin{equation}\label{e51}
	\oint_{ijk}\sum_l(\varphi_n)^l_{ij} X_{lk}^{n+1}=0. 
\end{equation}

Let $\n$ be a nilpotent Lie algebra with bracket $\varphi$, and $\mathfrak{a}$ a central ideal of dimension one which is stable under a torus $\mathrm{T}$ on $\n$. Let $\mathfrak{b}$ be a complement of $\mathfrak{a}$ in $\n$ stable under $\mathrm{T}$, identified with $\n/\mathfrak{a}$ as $\mathrm{T}$-module. Hence $\varphi$ may be written as $\varphi_0+\psi$ where $\varphi_0$ is the Lie bracket defined on $\n/\mathfrak{a}$ transferred on $\mathfrak{b}$, and $\psi$ is a cocycle of $\mathrm{Z}^2(\n/\mathfrak{a},\mathfrak{a})^{\mathrm{T}}$.
Let $\Omega(\n)$ be the $\mathrm{T}$-submodule of $\wedge^3\n$ generated by the vectors $\oint_{(xyz)}\varphi(x,y)\wedge z$. 
If $\beta$ is the weight of $\mathrm{T}$ on $\mathfrak{a}$, then the group $\mathrm{H}^2(\n,\mathfrak{a})^{\mathrm{T}}$ is isomorphic to 
$\mathrm{Hom}_{\mathrm{T}}(\mathrm{H}_2(\n),\mathfrak{a})=(\mathrm{H}_2(\n)_{\beta})^*.$
It follows from Proposition 3.2 in \cite{C2} that $\mathrm{H}_2(\n)_{\beta}$ is isomorphic to 
$
	\frac{(\ker\widetilde{\varphi})_{\beta}}{\Omega(\n)_{\beta}}.
$
\begin{prop}\label{nn52} If $\mathrm{T}$ does not have a null weight on $\n/\mathfrak{a}$, then the vector space $\mathrm{H}_2(\n)_{\beta}$ is isomorphic to $\frac{(\ker\widetilde{\varphi}_0)_{\beta}\cap(\ker\widetilde{\psi})_{\beta}}{\Omega(\n/\mathfrak{a})_{\beta}}.$ 
If the class of $\psi$ is nonzero, then it is a hyperplan of $\mathrm{H}_2(\n/\mathfrak{a})_{\beta}$.
\end{prop}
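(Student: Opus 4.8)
The plan is to deduce the statement from the description $\mathrm{H}_2(\n)_\beta\cong(\ker\widetilde{\varphi})_\beta/\Omega(\n)_\beta$ already quoted from Proposition~3.2 of \cite{C2}, by computing the numerator and the denominator separately in terms of $\n/\mathfrak{a}$. The basic tool is the $\mathrm{T}$-stable splitting $\n=\mathfrak{a}\oplus\mathfrak{b}$, with $\mathfrak{b}\cong\n/\mathfrak{a}$ as $\mathrm{T}$-modules, which yields the $\mathrm{T}$-module decomposition $\wedge^2\n=\wedge^2\mathfrak{b}\oplus(\mathfrak{a}\wedge\mathfrak{b})$ since $\wedge^2\mathfrak{a}=0$. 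As $\beta$ is the weight of $\mathrm{T}$ on $\mathfrak{a}$ and $\mathrm{T}$ has no zero weight on $\mathfrak{b}\cong\n/\mathfrak{a}$, every weight occurring in $\mathfrak{a}\wedge\mathfrak{b}$ has the form $\beta+\gamma$ with $\gamma\neq 0$; hence $(\mathfrak{a}\wedge\mathfrak{b})_\beta=0$, and this single vanishing is where the hypothesis enters and what drives the whole argument.

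First I would compute $(\ker\widetilde{\varphi})_\beta$. Since $\mathfrak{a}$ is central, $\widetilde{\varphi}$ annihilates $\mathfrak{a}\wedge\mathfrak{b}$, whereas on $\wedge^2\mathfrak{b}$ one has $\widetilde{\varphi}=\widetilde{\varphi}_0+\widetilde{\psi}$ with $\widetilde{\varphi}_0$ valued in $\mathfrak{b}$ and $\widetilde{\psi}$ valued in $\mathfrak{a}$; so $\ker\widetilde{\varphi}=(\mathfrak{a}\wedge\mathfrak{b})\oplus(\ker\widetilde{\varphi}_0\cap\ker\widetilde{\psi})$. Passing to the weight-$\beta$ part, using $(\mathfrak{a}\wedge\mathfrak{b})_\beta=0$ and the identification $\wedge^2\mathfrak{b}\cong\wedge^2(\n/\mathfrak{a})$, gives $(\ker\widetilde{\varphi})_\beta=(\ker\widetilde{\varphi}_0)_\beta\cap(\ker\widetilde{\psi})_\beta$.

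Next I would show $\Omega(\n)_\beta=\Omega(\n/\mathfrak{a})_\beta$. Spanning $\Omega(\n)$ by the generators $G(x,y,z):=\oint_{(xyz)}\varphi(x,y)\wedge z$ with $x,y,z$ ranging over a basis of $\n$ consisting of weight vectors and adapted to $\n=\mathfrak{a}\oplus\mathfrak{b}$, each $G(x,y,z)$ is $\mathrm{T}$-homogeneous of weight equal to the sum of the three weights, so $\Omega(\n)_\beta$ is spanned by those $G(x,y,z)$ whose weight is exactly $\beta$. If $x,y,z\in\mathfrak{b}$, then $G(x,y,z)=\oint\varphi_0(x,y)\wedge z+\oint\psi(x,y)\wedge z$, the first summand being (after the identification) the corresponding generator of $\Omega(\n/\mathfrak{a})$ and the second lying in $\mathfrak{a}\wedge\mathfrak{b}$; if exactly one argument, say $z$, lies in $\mathfrak{a}$, then centrality collapses $G(x,y,z)$ to $\varphi_0(x,y)\wedge z\in\mathfrak{b}\wedge\mathfrak{a}$. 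In either case, requiring $G(x,y,z)$ to be of weight $\beta$ forces its $\mathfrak{a}\wedge\mathfrak{b}$-component into $(\mathfrak{a}\wedge\mathfrak{b})_\beta=0$: in the first case $G(x,y,z)$ then reduces to the $\Omega(\n/\mathfrak{a})$-generator, in the second it vanishes. The same homogeneity shows conversely that every weight-$\beta$ generator of $\Omega(\n/\mathfrak{a})$ is of the first type and so lies in $\Omega(\n)_\beta$. Hence $\Omega(\n)_\beta=\Omega(\n/\mathfrak{a})_\beta$, and combining with the previous step yields the first assertion
$$\mathrm{H}_2(\n)_\beta\;\cong\;\frac{(\ker\widetilde{\varphi}_0)_\beta\cap(\ker\widetilde{\psi})_\beta}{\Omega(\n/\mathfrak{a})_\beta}.$$

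For the second assertion I would use the perfect pairing $\mathrm{H}^2(\n/\mathfrak{a},\mathfrak{a})^{\mathrm{T}}\cong\mathrm{Hom}_{\mathrm{T}}(\mathrm{H}_2(\n/\mathfrak{a}),\mathfrak{a})\cong(\mathrm{H}_2(\n/\mathfrak{a})_\beta)^*$, the analogue for $\n/\mathfrak{a}$ of the isomorphism recalled above, together with the formula of Proposition~3.2 of \cite{C2} applied to $\n/\mathfrak{a}$, namely $\mathrm{H}_2(\n/\mathfrak{a})_\beta\cong(\ker\widetilde{\varphi}_0)_\beta/\Omega(\n/\mathfrak{a})_\beta$. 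The cocycle condition $d\psi=0$ for the trivial action says precisely that $\widetilde{\psi}$ annihilates the span of the vectors $\oint\varphi_0(x,y)\wedge z$, i.e. that $\widetilde{\psi}$ vanishes on $\Omega(\n/\mathfrak{a})$; hence $\widetilde{\psi}$ restricted to $(\ker\widetilde{\varphi}_0)_\beta$ descends to a linear functional $\overline{\psi}$ on $\mathrm{H}_2(\n/\mathfrak{a})_\beta$, and $\overline{\psi}$ is exactly the image of the class of $\psi$ under the pairing. The isomorphism just proved identifies $\mathrm{H}_2(\n)_\beta$ with $\ker\overline{\psi}$, so if the class of $\psi$ is nonzero then $\overline{\psi}\neq 0$ and $\ker\overline{\psi}$ has codimension one in $\mathrm{H}_2(\n/\mathfrak{a})_\beta$, i.e. it is a hyperplane. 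The step I expect to be the main obstacle is the identification $\Omega(\n)_\beta=\Omega(\n/\mathfrak{a})_\beta$: one must check carefully that weight-$\beta$ homogeneity of a generator $G(x,y,z)$ really removes both the $\psi$-correction and the generators with one argument in $\mathfrak{a}$, and that in doing so no weight-$\beta$ relation of $\Omega(\n/\mathfrak{a})$ is lost.
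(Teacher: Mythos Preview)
Your proof is correct and follows essentially the same route as the paper: both compute $(\ker\widetilde{\varphi})_\beta$ and $\Omega(\n)_\beta$ separately via the $\mathrm{T}$-stable splitting $\n=\mathfrak{a}\oplus\mathfrak{b}$, using the single vanishing $(\mathfrak{a}\wedge\mathfrak{b})_\beta=0$ to eliminate all cross terms. Your treatment of $\Omega(\n)_\beta$ is in fact a bit more careful than the paper's, which simply expands $\varphi(b_1+a_1,b_2+a_2)\wedge(b_3+a_3)$ and observes that the two unwanted summands land in $(\mathfrak{a}\wedge\mathfrak{b})_\beta=0$.

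The only visible difference is in the hyperplane assertion. The paper argues by contrapositive at the level of $(\ker\widetilde{\varphi}_0)_\beta$: if $\widetilde{\psi}$ vanished there it would factor through $\wedge^2(\n/\mathfrak{a})/\ker\widetilde{\varphi}_0\cong[\n/\mathfrak{a},\n/\mathfrak{a}]$, i.e.\ $\psi=h\circ\widetilde{\varphi}_0$ would be a coboundary. You instead invoke the duality $\mathrm{H}^2(\n/\mathfrak{a},\mathfrak{a})^{\mathrm{T}}\cong(\mathrm{H}_2(\n/\mathfrak{a})_\beta)^*$ and note that the cocycle condition $d\psi=0$ is exactly the statement that $\widetilde{\psi}$ kills $\Omega(\n/\mathfrak{a})$, so the class of $\psi$ \emph{is} the descended functional $\overline{\psi}$. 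These are two phrasings of the same fact; yours is slightly more conceptual, the paper's slightly more elementary.
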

\begin{proof} We have
$$
	\widetilde{\varphi}((b_1+a_1)\wedge (b_2+a_2))=\varphi(b_1+a_1,b_2+a_2)=\varphi(b_1,b_2)=\varphi_0(b_1,b_2)+\psi(b_1,b_2),$$
for all $ (b_1,b_2,a_1,a_2)\in\mathfrak{b}^2\times\mathfrak{a}^2$. It follows that
\begin{equation}\label{e5.1}
(\ker\widetilde{\varphi})_{\beta}=(\ker\widetilde{\varphi}_0)_{\beta}\cap(\ker\widetilde{\psi})_{\beta}\oplus (\n\wedge\mathfrak{a})_{\beta}.
\end{equation}
Hence $(\n\wedge\mathfrak{a})_{\beta}=0$ since $\mathfrak{a}\wedge\mathfrak{a}=0$ and $\mathfrak{b}\simeq\n/\mathfrak{a}$ does not admit a null weight by hypothesis. By Eq. (\ref{e5.1}), we have $(\ker\widetilde{\varphi})_{\beta}=(\ker\widetilde{\varphi}_0)_{\beta}\cap(\ker\widetilde{\psi})_{\beta}$.
The space $\Omega(\n)_{\beta}$ is generated by the cyclic sums of tensors and we have
$$\varphi(b_1+a_1,b_2+a_2)\wedge(b_3+a_3)=\varphi_0(b_1,b_2)\wedge b_3+\varphi_0(b_1,b_2)\wedge a_3+\psi(b_1,b_2)\wedge b_3,$$
for all $(b_1,b_2,b_3,a_1,a_2,a_3)\in\mathfrak{b}^3\times\mathfrak{a}^3$.
According to the hypothesis, the last two tensors have a null projection on the weightspace of $\beta$ and hence there are isomorphisms
$\Omega(\n)_{\beta}\simeq\Omega(\n/\mathfrak{a})_{\beta}\quad\mathrm{and}\quad \mathrm{H}_2(\n)_{\beta}\simeq((\ker\widetilde{\varphi}_0)_{\beta}\cap(\ker\widetilde{\psi})_{\beta})/\Omega(\n/\mathfrak{a})_{\beta}.$
The homomorphism $\widetilde{\psi}:\wedge^2\n/\mathfrak{a}\rightarrow\mathfrak{a}$ is zero on a complement of $(\wedge^2\n/\mathfrak{a})_{\beta}$. If it is zero on $(\ker\widetilde{\varphi}_0)_\beta$, i.e. on $\ker\widetilde{\varphi}_0$, then it factorizes through the quotient $\wedge^2(\n/\mathfrak{a})/(\ker\widetilde{\varphi}_0)$. Hence $\wedge^2(\n/\mathfrak{a})/(\ker\widetilde{\varphi}_0)$ and $\left[\n/\mathfrak{a},\n/\mathfrak{a}\right]$ are isomorphic and $\widetilde{\psi}$ may be written as $h\circ\widetilde{\varphi}_0$ for some $h\in\mathrm{Hom}_\K(\left[\n/\mathfrak{a},\n/\mathfrak{a}\right],\mathfrak{a})$ , thus the class of $\psi$ is zero. It follows that if the class of $\psi$ is nonzero then $\widetilde{\psi}$ can be identified with a linear form which is nonzero on $(\ker\widetilde{\varphi}_0)_\beta$.
\end{proof}\\
If $(\L_{n,\varphi_0}^{\mathrm{T},\mathcal{A}_n})_n$ is a direct filiation, then the construction of the scheme $\L_{n+1}^{\mathrm{T},\mathcal{A}_{n+1}}$ is obtained from the scheme $\L_{n}^{\mathrm{T},\mathcal{A}_{n}}$ by vanishing the new Jacobi's polynomials
\begin{equation}\label{e57}
	\J^{n+1}_{ijk}=\oint_{ijk}\sum_{l}Y^l_{ij}X^{n+1}_{lk},\quad 1<j<k\leq n,
\end{equation}
and the polynomial
\begin{equation}\label{e58}
	X^{n+1}_{pq}-1,
\end{equation}
where the variables $Y=(Y^l_{ij})$, $1<j\leq n,l\leq n$ are the ancient coordinates and $X^{n+1}_{lk}$ $(l,k\leq n)$ are the new. We can summary this study showing the different cases of the discussion of the extension $\varphi_n\rightarrow\varphi_{n+1}$. If $\O_n=\O_n(Y)$  is the local ring of $\L_{n}^{\mathrm{T},\mathcal{A}_{n}}$ at $\varphi_n$  then the local ring $\O_{n+1}=\O_{n+1}(X,Y)$ of $\L_{n+1}^{\mathrm{T},\mathcal{A}_{n+1}}$ at $\varphi_{n+1}$  is obtained by localizing the quotient ring $\mathrm{A}=\O_n\left[X^{n+1}_{ij}\right]/\J$
by the maximal ideal generated by the classes of representatives $f(X,Y)$ which vanish at $\varphi_{n+1}$,
where $\J$ is the ideal generated by the polynomials (\ref{e57}) and (\ref{e58}). We obtain the following theorem with $\nu_n:=\dim\mathrm{H}_2(\varphi_n)_{\alpha_{n+1}}$.
\begin{thm}\label{t6.1}Let $(\L_{n,\varphi_0}^{\mathrm{T},\mathcal{A}_n})_n$ be a direct filiation. Each point $\varphi_{n+1}$ of the fiber of $\varphi_n$ in $\L_{n+1}^{\mathrm{T},\mathcal{A}_{n+1}}(\K)$ verifies one of the following cases:
\begin{enumerate}
	\item If $\nu_n=0$ then the fiber does not exist.
	\item If $\nu_n=1$ then $\varphi_{n+1}$ is unique in the fiber and $\O_{n+1}$ is a quotient of $\O_n$. Moreover, if $\varphi_n$ is rigid then $\varphi_{n+1}$ is rigid.
	\item If $\nu_n>1$ then $\O_{n+1}$ is the localization ring at $(0,...,0)$ of the quotient of $\O_n[T_1,...,T_{\nu_n-1}]$ by the ideal generated by polynomials in $Y$ of the form
	$$c_0^{\lambda}(Y)+\sum_{0<k<\nu_n}c_k^{\lambda}(Y)T_k,\lambda\in\Lambda-\Lambda_1,\,\,c_l^{\lambda}(Y)\in\m(\O_n),\,0\leq l\leq \nu_n,$$
	indexed over the triples $\lambda=(i<j<k)$ satisfying $\alpha_i+\alpha_j+\alpha_k=\alpha_{n+1}$. The Krull's dimension $d$ of the scheme $\L_{n+1}^{\mathrm{T},\mathcal{A}_{n+1}}$ at $\varphi_{n+1}$ is minored by $\nu_n-1$. In particular $\varphi_{n+1}$ is not rigid.
\end{enumerate}
\end{thm}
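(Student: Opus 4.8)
The plan is to reduce everything to one finite linear system over the base local ring $\O_n$ and to let the three cases fall out of the rank of that system at the closed point $\varphi_n$. Concretely, I would write the Jacobi equations (\ref{e57}) cutting out the fiber as $L(Y)\cdot X=0$, where $X=(X^{n+1}_{lk})$ is the tuple of new structure constants, $L(Y)$ is a matrix with entries in $\O_n$ whose rows are indexed by the triples $\lambda=(i<j<k)$ with $\alpha_i+\alpha_j+\alpha_k=\alpha_{n+1}$ (call $\Lambda$ this set), and adjoin the normalisation $X^{n+1}_{pq}-1$ of (\ref{e58}). The first step is to identify the $\K$-solution space of $L(\varphi_n)\cdot X=0$ with $\mathrm{Z}^2(\varphi_n,\K e_{n+1})^{\mathrm{T}}$, and to note that, since $\alpha_{n+1}$ occurs with multiplicity one in $\pi_{n+1}$ (so $\alpha_{n+1}\notin\pi_n$), there is no $\mathrm{T}$-invariant $1$-cochain of weight $\alpha_{n+1}$; hence $\mathrm{B}^2(\varphi_n,\K e_{n+1})^{\mathrm{T}}=0$ and, by (\ref{e46}), this space has dimension $\nu_n=\dim\mathrm{H}_2(\varphi_n)_{\alpha_{n+1}}$. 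Thus $\rho:=\mathrm{rank}\,L(\varphi_n)=M-\nu_n$, where $M=\dim\mathrm{C}^2(\varphi_n,\K e_{n+1})^{\mathrm{T}}$ is the number of $X$-variables. Case $1$ is then immediate: if $\nu_n=0$ the only solution of (\ref{e57}) at $\varphi_n$ is $X\equiv0$, which contradicts $X^{n+1}_{pq}=1$, so the scheme-theoretic fiber is empty.

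For $\nu_n\ge 1$ I would run a Cramer-rule argument over $\O_n$. The structure constants $\psi^{\star}=((\varphi_{n+1})^{n+1}_{lk})$ lie in $\ker L(\varphi_n)$ and satisfy $\psi^{\star}_{pq}=1\neq 0$ (this is exactly what a direct filiation gives), so the column of $L(\varphi_n)$ indexed by $(p,q)$ is a combination of the other columns; therefore I can pick a set $B$ of $\rho$ column indices with $(p,q)\notin B$ forming a basis of the column space, and a set $\Lambda_1\subset\Lambda$ of $\rho$ rows, so that the corresponding $\rho\times\rho$ minor $D$ of $L(Y)$ is nonzero at $\varphi_n$, hence a unit of $\O_n$. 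Writing $T_1,\dots,T_{\nu_n-1}$ for the remaining $X$-variables (those outside $B\cup\{X^{n+1}_{pq}\}$, recentred so that $\varphi_{n+1}$ is $T=0$), Cramer's rule over $\O_n$ solves the $\Lambda_1$-equations for the variables $X_B$ as $\O_n$-linear combinations of $1$ and of the $T_k$; substituting into the $\Lambda-\Lambda_1$ equations yields, for each $\lambda\in\Lambda-\Lambda_1$, a relation $c_0^{\lambda}(Y)+\sum_{0<k<\nu_n}c_k^{\lambda}(Y)T_k=0$ with $c_l^{\lambda}(Y)\in\O_n$; and since the rank $\rho$ of $L(\varphi_n)$ is already realised on the rows $\Lambda_1$, the rows of $\Lambda-\Lambda_1$ are dependent on those of $\Lambda_1$ at the closed point, so every $c_l^{\lambda}$ vanishes at $\varphi_n$, i.e. $c_l^{\lambda}\in\m(\O_n)$. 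Since $D$ is already a unit in $\mathrm{A}=\O_n[X^{n+1}_{ij}]/\J$, this exhibits $\mathrm{A}$ as the quotient of $\O_n[T_1,\dots,T_{\nu_n-1}]$ by the ideal generated by these $c^{\lambda}$, and $\O_{n+1}$ as its localisation at $(\m(\O_n),T=0)$: this is statement $3$. Specialising to $\nu_n=1$ (no $T_k$'s at all) shows that $\O_{n+1}=\O_n/(c_0^{\lambda};\lambda\in\Lambda-\Lambda_1)$, so the canonical local map $\O_n\to\O_{n+1}$ is onto and $\varphi_{n+1}$ is the unique point of the fiber: this is statement $2$.

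Finally I would harvest the rigidity consequences from Theorem \ref{t4.1}, together with Remark \ref{r2.1} so that it applies to the slices of $\L^{\mathrm{T}}$. If $\nu_n=1$ and $\varphi_n$ is rigid, then $\dim_\K\O_n<\infty$; since $\O_{n+1}$ is a quotient of $\O_n$, $\dim_\K\O_{n+1}<\infty$, hence $\varphi_{n+1}$ is rigid. If $\nu_n\ge 2$, then by (\ref{e51})--(\ref{e52}) the fiber of $\varphi_n$ inside $\L_{n+1}^{\mathrm{T},\mathcal{A}_{n+1}}(\K)$ is literally an affine subspace through $\varphi_{n+1}$ of dimension $\nu_n-1\ge 1$ (obtained by setting the old coordinates to their $\varphi_n$-values), whose local ring at $\varphi_{n+1}$ is a quotient of $\O_{n+1}$ of Krull dimension $\nu_n-1$; therefore $d\ge\nu_n-1$, $\varphi_{n+1}$ is not an isolated point of $\L_{n+1}^{\mathrm{T},\mathcal{A}_{n+1}}(\K)$, and it is not rigid by Theorem \ref{t4.1}.

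The one genuinely delicate step is the bookkeeping of the second paragraph: the matroid observation that the column of $X^{n+1}_{pq}$ may be dropped from a spanning family of columns (which is exactly where $\psi^{\star}_{pq}\neq 0$ enters), so that a Cramer minor can be chosen avoiding it, and the verification that the surviving coefficients $c_l^{\lambda}$ really lie in $\m(\O_n)$ — equivalently, that the rank of $L$ at $\varphi_n$ is exactly $\rho=M-\nu_n$ and is attained on the rows $\Lambda_1$. Everything else is either set up in the paragraph preceding the statement or is a direct appeal to Theorem \ref{t4.1}.
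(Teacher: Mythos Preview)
Your argument is correct and follows essentially the same route as the paper: write the new Jacobi relations (\ref{e57})--(\ref{e58}) as an $\O_n$-linear system in the $X^{n+1}_{ij}$, identify the $\K$-solution space with $\mathrm{Z}^2(\varphi_n,\K e_{n+1})^{\mathrm{T}}$ of dimension $\nu_n$, lift an invertible $\rho\times\rho$ minor from $\K$ to $\O_n$ to eliminate $\rho$ of the variables, and observe that the residual coefficients land in $\m(\O_n)$ because the rank at $\varphi_n$ is already attained on $\Lambda_1$. Your matroid remark explaining why the column $(p,q)$ can be kept out of the pivot set, your explicit check that $\mathrm{B}^2(\varphi_n,\K e_{n+1})^{\mathrm{T}}=0$, and your appeal to Theorem~\ref{t4.1} for the rigidity statements are all welcome clarifications that the paper leaves implicit.
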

\begin{proof} The case $\nu_n=0$ is trivial. If $\nu_n>0$, the Jacobi's polynomials indexed by the set $\Lambda$ of triples $i<j<k$ with $\alpha_i+\alpha_j+\alpha_k=\alpha_{n+1}$ and the condition $X^{n+1}_{pq}=1$ may be written as
$a_0^{\lambda}(Y)+\sum_{\mu\neq (p,q)}a_{\mu}^{\lambda}(Y)X_\mu,\quad\lambda\in\Lambda,$
where $\mu$ runs through the set $M$ of couples $i<j$ such that $\alpha_i+\alpha_j=\alpha_{n+1}$, $(i,j)\neq (p,q)$ and $a_0^{\lambda},a_\mu^{\lambda}\in\O_n$. Let $(A)$ be the system obtained by vanishing these polynomials. If we fix $Y$ to $\varphi_n$, the equation system $(A)$ becomes a equation system $(B)$ where the solutions in $X$ are formed of $\Psi\in{Z}^2(\varphi_n,\K e_{n+1})^{\mathrm{T}}$ such that $\Psi^{n+1}_{pq}=1$. It gives an affine space of dimension $\nu_n-1$, see Proposition \ref{nn52}. Let $\Lambda_1$ be a subset of $\Lambda$ such that the equation subsystem of $(B)$ indexed by $\Lambda_1$ is equivalent to $(B)$, and $\Lambda_1$ is minimal for this property. There is a subset $M_1\in M$ of cardinal $|\Lambda_1|$ such that the submatrix $(a^\lambda_\mu(\varphi_n))_{(\lambda,\mu)\in\Lambda_1\times M_1}$ is invertible. This matrix remains invertible if we take its values in $\O_n$ since its projection by $\mathrm{pr}:\O_n\rightarrow\O_n/\m(\O_n)$ is. It follows that the equation subsystem of $(A)$ indexed by $\Lambda_1$ permits to express the variables $(X^\mu)_{\mu\in M_1}$ in function of variables $(X^\rho)_{\rho\in M-M_1}$, i.e.
$
	X^\mu=A^\mu_0(Y)+\sum_\rho A^\mu_\rho(Y)X^\rho, \mu\in M_1,
$
where $A^\mu_0(Y),A^\mu_\rho(Y)\in\O_n$.
We substitute these expressions in the remaining expressions of $(A)$ for multi-indices $\Lambda-\Lambda_1$, which gives the new system of equations by vanishing the following expressions:
\begin{equation}\label{e6.12}
	b_0^\lambda(Y)+\sum_{\rho\in M-M_1} b^\lambda_\rho(Y)X^\rho\quad \lambda\in\Lambda-\Lambda_1.
\end{equation}
If we fix $Y$ to $\varphi_n$, then the equation system (\ref{e6.12}) corresponds to the system of equations $(B)$ indexed by $\Lambda-\Lambda_1$. Theses equations give anything more on $\K$, and are identically null, i.e $b^\lambda_0(Y)=b^\lambda_\rho(Y)=0$ for $\lambda\in\Lambda-\Lambda_1$ and $\rho\in M-M_1$. In other words, the elements $b^\lambda_0(Y)$ and $b^\lambda_\rho(Y)$ belong to the maximal ideal of $\O_n$. The ring $\mathrm{A}$ is equal to the quotient of $\O_n[X^\rho,\rho\in M-M_1]$ by the ideal generated by the terms (\ref{e6.12}). If we localize this ring at point $\varphi_{n+1}$, we will take new adapted variables $X^\rho-(\varphi_{n+1})^\rho$ which we write $T_k$ with indexation $k$ on $\left\{1,...,\nu_n-1\right\}$. Then the Jacobi's polynomials are written as $c_0^{\lambda}(Y)+\sum_{0<k<\nu_n}c_k^{\lambda}(Y)T_k,\quad\lambda\in\Lambda-\Lambda_1,$
with $c_l^{\lambda}(Y)\in\m(\O_n)$, $(0\leq l< \nu_n)$ and $\mathrm{A}$ is written as in $3.$ We localize at point $\varphi_{n+1}$ by considering the maximal ideal obtained as quotient of the maximal ideal of $\O_n[T]$ is $\m(\O_n)[T]+\sum_{1\leq k<\nu_n}T_k.\O_n[T]$.\\
If $\nu_n=1$, we see that the ring $\A$ is equal to the quotient of $\O_n$ by the ideal generated by the $b_0^\lambda$, $\lambda\in\Lambda-\Lambda_1$. We obtain the result.\\
If $\nu_n>1$, there are $(\nu_n-1)$ new parameters which are algebraically independent over $\K$, Then the Krull's dimension satisfies $d\geq\nu_n-1$.
\end{proof}
\subsection{Continuous families of Lie algebras}
\begin{de} \label{d5.2}A path of weights is said to be simple if all weights are distinct.
\end{de}
With this definition, the coordinates $X^k_{ij}$ may be indexed by the weights themselves. One can write $X_{ij}$ instead of $X^k_{ij}$ since the index $k$ is fixed by the weight $\alpha_i+\alpha_j$. The set of pairs $i<j$ such that $\alpha_i+\alpha_j\in\pi_n$ shall be denoted again by $\mathcal{J}$. Set $\G_n:=\mathrm{Gl}_n\left(\K\right)^{\mathrm{T}}_0$.
\begin{prop}Let $\mathrm{T}$ be a torus on $\K^n$. One supposes that its set of weights $\pi_n$ is a simple path of weights. Then $\sum_{n}\left(\mathrm{T}\right)$ is a Zariski's open set equal to the set of elements $\varphi_n\in\L_n^{\mathrm{T}}$ such that $\mathrm{T}\ltimes\varphi_n$ is complete. It is the union of the $\G_n$-orbits of maximal dimension $n-\dim_\K\mathrm{T}$.
\end{prop}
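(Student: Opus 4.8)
Everything is controlled by the diagonal part of $\gl_n(\K)$ relative to the weights of $\mathrm{T}$. Put $r:=\dim_\K\mathrm{T}$ and let $\mathfrak{d}\subset\gl_n(\K)$ be the space of diagonal matrices in the eigenbasis $(e_i)$. Since the path is \emph{simple}, the $\mathrm{T}$-weight spaces on $\K^n$ are the lines $\K e_i$, so $\gl_n(\K)^{\mathrm{T}}=\mathfrak{d}$ and $\Gl_n(\K)^{\mathrm{T}}=(\K^{*})^n$ is already connected; hence $\G_n=(\K^{*})^n$ is an $n$-dimensional torus with Lie algebra $\mathfrak{d}$. By the path conditions the weights span $\mathrm{T}^{*}$ and are nonzero, so $t\mapsto(\alpha_i(t))_i$ embeds $\mathrm{T}$ as an $r$-dimensional subspace of $\mathfrak{d}$; and by the very definition of $\L_n^{\mathrm{T}}$ this subspace sits inside $\mathfrak{d}\cap\Der(\varphi,\K)$ for every $\varphi\in\L_n^{\mathrm{T}}(\K)$.

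\textbf{Key lemma.} For $\varphi\in\L_n^{\mathrm{T}}(\K)$ I would prove the equivalence of: \emph{(a)} $\mathrm{T}$ is a maximal torus of $\Der(\varphi,\K)$; \emph{(b)} $\mathfrak{d}\cap\Der(\varphi,\K)=\mathrm{T}$; \emph{(c)} the linear map $L_\varphi:\mathfrak{d}\to\K^{|\mathcal{J}|}$, $\mathrm{diag}(\lambda)\mapsto\bigl((\lambda_k-\lambda_i-\lambda_j)\varphi^k_{ij}\bigr)_{(^k_{ij})\in\mathcal{J}}$, has rank $n-r$; \emph{(d)} $\mathrm{T}\ltimes\varphi$ is complete. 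For (a)$\Leftrightarrow$(b): any torus of $\Der(\varphi,\K)$ containing $\mathrm{T}$ commutes with $\mathrm{T}$, hence lies in $\gl_n(\K)^{\mathrm{T}}=\mathfrak{d}$; and $\mathfrak{d}\cap\Der(\varphi,\K)$, being an abelian algebra of semisimple elements, is the Lie algebra of a subtorus of $\Aut(\varphi,\K)$ containing $\mathrm{T}$, so $\mathrm{T}$ fails to be properly contained in a torus of $\Der(\varphi,\K)$ exactly when it equals $\mathfrak{d}\cap\Der(\varphi,\K)$, which for tori means being maximal. Since $\ker L_\varphi=\mathfrak{d}\cap\Der(\varphi,\K)\supseteq\mathrm{T}$ always, (b)$\Leftrightarrow$(c) is just the statement that this kernel attains its minimal dimension $r$ (equivalently, $L_\varphi$ attains its maximal possible rank $n-r$). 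For (a)$\Rightarrow$(d): the centre of $\mathrm{T}\ltimes\varphi$ is trivial (a central element lies in the zero weight space of $\mathrm{T}$ on $\n$, which is $0$, and then acts trivially on $\n$, hence is $0$ by faithfulness of $\mathrm{T}$ on $\n$); writing $\Der(\mathrm{T}\ltimes\varphi)=\bigoplus_\mu(\Der(\mathrm{T}\ltimes\varphi))_\mu$ for the semisimple action $[\mathrm{ad}\,\mathrm{ad}(t),-]$, a weight-$\mu$ derivation $D$ with $\mu(t_0)\neq0$ satisfies $\mathrm{ad}(Dt_0)=-\mu(t_0)D$ and is inner, while a weight-$0$ derivation $D$ preserves every $\mathrm{T}$-weight line, so $D|_\n$ is diagonal, lies in $\mathfrak{d}\cap\Der(\varphi,\K)=\mathrm{T}$ by (b), and subtracting the matching inner derivation leaves a derivation killing $\n$ and mapping $\mathrm{T}$ into $\mathrm{T}$, which is $0$ again by faithfulness; alternatively (d) follows from Proposition \ref{n3.3} once one notes $\mathrm{N}^{\mathrm{T}}=\mathrm{N}\cap\gl_n(\K)^{\mathrm{T}}=\mathrm{N}\cap\mathfrak{d}=0$, a nilpotent diagonal matrix being zero. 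For (d)$\Rightarrow$(a): if (b) fails there is a diagonal derivation $\delta$ of $\varphi$ outside $\mathrm{T}$, and extending it by $0$ on $\mathrm{T}$ gives a derivation of $\mathrm{T}\ltimes\varphi$ that cannot be inner (an inner derivation vanishing on $\mathrm{T}$ is $\mathrm{ad}(t)$ with $t\in\mathrm{T}$, whose restriction to $\n$ lies in $\mathrm{T}$), contradicting completeness.

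\textbf{Conclusion.} Granting the lemma, $\sum_n(\mathrm{T})=\{\varphi:\text{(a) holds}\}=\{\varphi:\text{(c) holds}\}$; the matrix of $L_\varphi$ has entries $\K$-linear in the structure constants $\varphi^k_{ij}$, and since its rank is always $\le n-r$, the condition ``rank $=n-r$'' coincides with ``rank $\ge n-r$'', which is Zariski-open; hence $\sum_n(\mathrm{T})$ is open in $\L_n^{\mathrm{T}}(\K)$. Next, $\{\varphi:\text{(a)}\}=\{\varphi:\text{(d)}\}$ is precisely the asserted identification of $\sum_n(\mathrm{T})$ with the set of $\varphi$ for which $\mathrm{T}\ltimes\varphi$ is complete. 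Finally, $\G_n=(\K^{*})^n$ acts on $\L_n^{\mathrm{T}}(\K)$ and the Lie algebra of the stabilizer of $\varphi$ is $\mathfrak{d}\cap\Der(\varphi,\K)\supseteq\mathrm{T}$, so $\dim(\G_n\ast\varphi)=n-\dim(\mathfrak{d}\cap\Der(\varphi,\K))\le n-r$, with equality iff (b) holds iff $\varphi\in\sum_n(\mathrm{T})$; thus $n-r=n-\dim_\K\mathrm{T}$ is the maximal orbit dimension (attained since $\sum_n(\mathrm{T})\neq\varnothing$), and $\sum_n(\mathrm{T})$ is exactly the union of the $\G_n$-orbits of that maximal dimension.

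\textbf{Main difficulty.} I expect the delicate point to be the implication (a)$\Rightarrow$(d): that maximality of $\mathrm{T}$ among tori of derivations genuinely forces $\mathrm{T}\ltimes\varphi$ to be complete. The weight-decomposition argument above is the cleanest route, but it rests on two features special to a simple path of strictly positive weights --- $\mathrm{T}$ acting on $\n$ faithfully and with no zero weight, and every $\mathrm{T}$-weight-$0$ diagonal derivation of $\n$ already lying in $\mathrm{T}$ (which is (b)) --- so the care lies in assembling these, or else in invoking Proposition \ref{n3.3} after checking $\mathrm{N}^{\mathrm{T}}=0$. The remaining parts are bookkeeping with the diagonal torus.
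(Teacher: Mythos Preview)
Your proof is correct and follows essentially the same route as the paper: both identify $\Der(\varphi)^{\mathrm{T}}$ with the diagonal torus $\tau=\mathfrak{d}\cap\Der(\varphi)$ (using simplicity of the weights), characterize $\sum_n(\mathrm{T})$ by $\tau=\mathrm{T}$, and read off openness and the orbit-dimension description from that. The only real difference is that the paper obtains completeness of $\tau\ltimes\varphi$ by citing an external reference (using $\mathrm{T}\cdot\n+[\n,\n]=\n$ and $\n^{\mathrm{T}}\cap\mathrm{Z}(\n)=0$), whereas you supply the weight-decomposition argument for (a)$\Leftrightarrow$(d) directly and make the rank condition (c) explicit for openness; your version is more self-contained but not a different strategy.
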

\begin{proof} If $\varphi$ is an element of $\L_n^{\mathrm{T}}(\K)$, then $\Der\left(\varphi\right)^\mathrm{T}$ is a torus $\tau$ containing $\mathrm{T}$. Then the law $\varphi$ satisfies $\mathrm{T}.\n+\left[\n,\n\right]=\n$ and $\n^{\mathrm{T}}\cap\mathrm{Z}(\n)=0$, and then $\mathrm{\tau}\ltimes\varphi$ is a complete Lie algebra, cf. \cite{C4}. Then the set $\sum_{n}\left(\mathrm{T}\right)$ consists of elements $\varphi$ such that $\tau=\mathrm{T}$, i.e. $\dim\tau=\dim\mathrm{T}$ since $\mathrm{T}\subset\tau$. It follows that it is the Zariski's open set union of orbits of maximal dimension.
\end{proof}
\begin{rem} Under the above hypotheses, any element $\varphi\in\L^{\mathrm{T}}_n$ belongs to the Zariski's open $\sum_{n}\left(\tau\right)$ of the subscheme $\L^{\tau}_n$ of $\L^{\mathrm{T}}_n$ in which we can apply the reduction theorem for $\tau\ltimes\varphi$. The different $\Sigma_n(\mathrm{\tau})$ form a stratification of $\L_n^{\mathrm{T}}$.
\end{rem}
The open stratum $\Sigma_n(\mathrm{T})$ gives a quotient variety $\Sigma_n(\mathrm{T})/\G_n$ under $\G_n$.
\begin{prop}\label{p5.1}The isomorphic classes in $\sum_n(\mathrm{T})$ are the orbits of the normalizer group $\mathrm{H}$ of $\mathrm{T}$ in $\Gl_n(\K)$ under the canonical action.
\end{prop}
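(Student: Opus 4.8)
The plan is to establish the two inclusions between isomorphism classes and $\mathrm{H}$-orbits separately. The inclusion of $\mathrm{H}$-orbits into isomorphism classes is immediate, while the reverse inclusion will be reduced to the conjugacy of maximal tori in a linear algebraic group.

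First I would check that $\mathrm{H}$ acts on $\sum_n(\mathrm{T})$ and that each of its orbits lies in a single isomorphism class. If $h\in\mathrm{H}$ and $\varphi\in\sum_n(\mathrm{T})$, then $h\ast\varphi$ is linearly isomorphic to $\varphi$ as a Lie algebra by the very definition of the action (\ref{A}); moreover conjugation by $h$ is a Lie algebra isomorphism $\Der(\varphi)\to\Der(h\ast\varphi)=h\,\Der(\varphi)\,h^{-1}$ carrying the torus $\mathrm{T}$ onto $h\mathrm{T}h^{-1}=\mathrm{T}$, so $\mathrm{T}$ is again a maximal torus of $\Der(h\ast\varphi)$ and thus $h\ast\varphi\in\sum_n(\mathrm{T})$.

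For the converse, I would start from a Lie algebra isomorphism $\varphi'=g\ast\varphi$ with $\varphi,\varphi'\in\sum_n(\mathrm{T})$ and $g\in\Gl_n(\K)$. A direct computation with the action formula gives $\Der(\varphi')=g\,\Der(\varphi)\,g^{-1}$, so $g\mathrm{T}g^{-1}$ and $\mathrm{T}$ are both maximal tori of the Lie algebra $\Der(\varphi')$, which is the Lie algebra of the linear algebraic group $\Aut(\varphi')\subset\Gl_n(\K)$. Passing to the associated maximal algebraic tori and invoking their conjugacy, one gets $a\in\Aut(\varphi')$ with $a(g\mathrm{T}g^{-1})a^{-1}=\mathrm{T}$. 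Then $h:=ag$ normalizes $\mathrm{T}$, so $h\in\mathrm{H}$, and $h\ast\varphi=a\ast(g\ast\varphi)=a\ast\varphi'=\varphi'$ since $a$ fixes $\varphi'$. Hence $\varphi$ and $\varphi'$ lie in the same $\mathrm{H}$-orbit, and combined with the previous step the isomorphism classes in $\sum_n(\mathrm{T})$ are exactly the $\mathrm{H}$-orbits.

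The delicate point is the bridge between the Lie-algebraic maximality of $\mathrm{T}$ inside $\Der(\varphi')$ and the group-theoretic conjugacy statement: one uses that on the laws of $\sum_n(\mathrm{T})$ the torus $\mathrm{T}$ is algebraic, so that it is the Lie algebra of a unique maximal algebraic torus of $\Aut(\varphi')$ and the normalizer of $\mathrm{T}$ in $\Gl_n(\K)$ coincides with the normalizer of that torus. It is precisely the exact maximality of $\mathrm{T}$ built into the definition of $\sum_n(\mathrm{T})$, not merely the inclusion $\mathrm{T}\subset\Der(\varphi')$, that makes the argument work.
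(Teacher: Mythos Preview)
Your proof is correct and follows essentially the same route as the paper: both directions are handled identically, and for the converse the paper also transports $\mathrm{T}$ by the isomorphism to a second maximal torus of $\varphi_2$, then uses conjugacy of maximal tori by an automorphism of $\varphi_2$ (the paper attributes this to Mostow's theorem over the algebraically closed field $\K$) to produce an element of $\mathrm{H}$ sending $\varphi_1$ to $\varphi_2$. Your added remark on passing from the Lie-algebraic torus $\mathrm{T}\subset\Der(\varphi')$ to the corresponding algebraic torus in $\Aut(\varphi')$ makes explicit a step the paper leaves implicit.
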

\begin{proof} We can see that $\mathrm{H}$ stabilizes $\L_n^{\mathrm{T}}(\K)$ and $\sum_n(\mathrm{T})$. Conversely, if $\varphi_1$ and $\varphi_2$ are isomorphic, then there is $s\in\Gl_n(\K)$ such that $s\ast\varphi_1$ is equal to $\varphi_2$ with maximal torus $s\cdot\mathrm{T}\cdot s^{-1}$. The field $\K$ being algebraically closed, there is an automorphism $s'$ of $\varphi_2$ which conjugates $s\cdot\mathrm{T}\cdot s^{-1}$ and $\mathrm{T}$ according to a Mostow's theorem, we have $s'\cdot s\in\mathrm{H}$.
\end{proof}
\begin{prop}The isomorphic classes in the quotient $\Sigma_n(\mathrm{T})/\G_n$ are the orbits of the finite group $\Gamma:=\mathrm{H}/\mathrm{H}_0$, with $\mathrm{H}$ the normalizer group of $\mathrm{T}$ in $\Gl_n(\K)$ and $\mathrm{H}_0$ its identity component. This variety is called continuous family associated with the maximal torus $\mathrm{T}$.
\end{prop}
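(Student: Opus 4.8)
The plan is to deduce the statement directly from Proposition~\ref{p5.1} together with the description of the normalizer $\mathrm{H}$ of $\mathrm{T}$ in $\Gl_n(\K)$ recalled in Section~1.7. The first step is to pin down the identity component $\mathrm{H}_0$. Since $\mathrm{T}$ is an abelian, hence completely reducible, subalgebra of $\gl_n(\K)$, one has $\mathrm{T}\subset\gl_n(\K)^{\mathrm{T}}$, so the Lie algebra of $\mathrm{H}$, namely $\mathrm{T}+\gl_n(\K)^{\mathrm{T}}$, equals $\gl_n(\K)^{\mathrm{T}}$, which is the Lie algebra of $\Gl_n(\K)^{\mathrm{T}}$; therefore $\mathrm{H}_0=\Gl_n(\K)^{\mathrm{T}}_0=\G_n$. (If one prefers not to identify $\mathrm{H}_0$ with $\G_n$ as groups, it suffices to invoke point~(d) of Section~1.7, which gives that the $\mathrm{H}_0$-orbits and the $\G_n$-orbits in $\L_n^{\mathrm{T}}(\K)$ coincide.) Moreover $\mathrm{H}$, being the normalizer of $\mathrm{T}$ in the algebraic group $\Gl_n(\K)$, is a closed algebraic subgroup, so it has finitely many connected components and $\Gamma=\mathrm{H}/\mathrm{H}_0$ is a finite group.

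Next I would set up the action. By the proof of Proposition~\ref{p5.1}, $\mathrm{H}$ stabilizes $\L_n^{\mathrm{T}}(\K)$ and $\Sigma_n(\mathrm{T})$: for $h\in\mathrm{H}$ we have $\Der(h\ast\varphi)=h\,\Der(\varphi)\,h^{-1}$ and $h\mathrm{T}h^{-1}=\mathrm{T}$, so $\mathrm{T}$ is a maximal torus of $\Der(\varphi)$ if and only if it is one of $\Der(h\ast\varphi)$. Since $\mathrm{H}$ normalizes $\mathrm{T}$ it normalizes the centralizer $\Gl_n(\K)^{\mathrm{T}}$ and hence its identity component $\G_n$; thus the $\mathrm{H}$-action on $\Sigma_n(\mathrm{T})$ descends to the quotient variety $\Sigma_n(\mathrm{T})/\G_n$. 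On this quotient $\G_n=\mathrm{H}_0$ acts trivially (each $\mathrm{H}_0$-orbit is a $\G_n$-orbit), so the $\mathrm{H}$-action factors through $\Gamma=\mathrm{H}/\mathrm{H}_0$, giving a $\Gamma$-action on $\Sigma_n(\mathrm{T})/\G_n$.

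The core of the argument is then a coset computation on top of Proposition~\ref{p5.1}. Write $\mathrm{H}=\coprod_{\gamma\in\Gamma}\mathrm{H}_0\tilde\gamma$ for chosen representatives $\tilde\gamma$. Given $\varphi_1,\varphi_2\in\Sigma_n(\mathrm{T})$, Proposition~\ref{p5.1} says that $(\K^n,\varphi_1)$ and $(\K^n,\varphi_2)$ are isomorphic if and only if $\varphi_2\in\mathrm{H}\ast\varphi_1$, i.e. if and only if there are $\gamma\in\Gamma$ and $h_0\in\mathrm{H}_0$ with $\varphi_2=h_0\ast(\tilde\gamma\ast\varphi_1)$; equivalently $\tilde\gamma\ast\varphi_1\in\mathrm{H}_0\ast\varphi_2=\G_n\ast\varphi_2$, that is, the classes $\overline{\varphi_1},\overline{\varphi_2}\in\Sigma_n(\mathrm{T})/\G_n$ lie in one and the same $\Gamma$-orbit. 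Conversely, if $\overline{\varphi_2}=\gamma\cdot\overline{\varphi_1}$ then $\tilde\gamma\ast\varphi_1$ is $\G_n$-equivalent to $\varphi_2$, so $\varphi_1\cong\tilde\gamma\ast\varphi_1\cong\varphi_2$. Hence the isomorphism classes among the Lie algebras parametrized by $\Sigma_n(\mathrm{T})/\G_n$ are exactly the $\Gamma$-orbits, which is the assertion; the naming ``continuous family associated with $\mathrm{T}$'' is then just a definition.

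I do not expect a genuine obstacle here. The only points requiring care are the identification $\mathrm{H}_0=\G_n$ (settled by Section~1.7), the verification that the residual $\G_n$-action on $\Sigma_n(\mathrm{T})/\G_n$ is trivial so that one really obtains a $\Gamma$-action, and the mild bookkeeping to pass from ``$\mathrm{H}$-orbits in $\Sigma_n(\mathrm{T})$'' to ``$\Gamma$-orbits in $\Sigma_n(\mathrm{T})/\G_n$''; none of these is delicate once Proposition~\ref{p5.1} is in hand.
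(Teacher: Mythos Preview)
Your proof is correct and follows exactly the route the paper takes: the paper's own proof is the single sentence ``It is a direct consequence of Proposition~\ref{p5.1},'' and your argument is simply a careful unpacking of that consequence, using Section~1.7 to identify $\mathrm{H}_0$ with $\G_n$ and then passing from $\mathrm{H}$-orbits on $\Sigma_n(\mathrm{T})$ to $\Gamma$-orbits on the quotient.
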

\begin{proof} It is a direct consequence of Proposition \ref{p5.1}.
\end{proof}\\
Next, we will study the quotient variety $\Sigma_n(\mathrm{T})/\G_n$ with the help of slices $\L^{\mathrm{T},\mathcal{A}}_{n,\varphi_0}$, $\varphi_0\in\Sigma_n(\mathrm{T})$, which are local affine charts.
The weights being distinct, then $\mathrm{G}_n$ is the diagonal group identified with $(\K^*)^n$. The canonical action of an element $s=\left(s_1,...,s_n\right)$ of $\mathrm{G}_n$ on $X$ is defined by $
	(s\ast X)^k_{ij}=\frac{s_k}{s_is_j}X^k_{ij},$
where $X$ is a law defined by its coordinates $X(e_i,e_j)=\sum_kX^k_{ij}e_k$. It is particularly easy to characterize an admissible set of $\mathcal{J}$ at a law $\varphi$. It is just a subset $\mathcal{A}$ of $\mathcal{J}$ such that the following system of equations 
\begin{equation}\label{E.6.2}
	\varphi^k_{ij}=\frac{s_k}{s_is_j}\varphi^k_{ij}\quad \mathrm{for}\quad ({i<j})\in\mathcal{A},
\end{equation}
is equivalent to the system $s\ast\varphi=\varphi,$
and is minimal for this property. The minimality of $\mathcal{A}$ implies that $\varphi_{ij}^k\neq 0$ for all $({i<j})\in\mathcal{A}$. It follows that $\mathcal{A}$ is contained in the set $\mathcal{J}_{\varphi}$ consisting of pairs $(i<j)$ such that $\varphi_{ij}^k\neq 0$.
\begin{prop}A subset $\mathcal{A}$ of $\mathcal{J}$ is admissible at $\varphi$ if and only if $\mathcal{A}$ is a minimal subset of $\mathcal{J}_{\varphi}$ such that the system
$
	s_k=s_is_j,(i<j)\in\mathcal{A},
$
defines the subgroup $\mathrm{Aut(\varphi)}_0^{\mathrm{T}}$ of $\mathrm{G}_n$.
\end{prop}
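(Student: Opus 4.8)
The plan is to reduce the statement to the characterization of admissibility obtained just above: $\mathcal{A}\subset\mathcal{J}$ is admissible at $\varphi$ exactly when it is a minimal subset of $\mathcal{J}$ for which the system $(\ref{E.6.2})$ has the same solutions in $\G_n$ as the full system $s\ast\varphi=\varphi$, and every such minimal $\mathcal{A}$ automatically lies in $\mathcal{J}_\varphi$. Granting this, the proposition is merely the translation of both conditions into the multiplicative equations $s_k=s_is_j$.

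The first step is to record the key elementary identity: $\G_n$ being the diagonal torus (the weights being distinct), $(s\ast\varphi)^k_{ij}=\frac{s_k}{s_is_j}\varphi^k_{ij}$ for $s=(s_1,\dots,s_n)\in\G_n$. Hence at an index $(i<j)\in\mathcal{J}_\varphi$, where $\varphi^k_{ij}\neq 0$ and $k$ is determined by $\alpha_k=\alpha_i+\alpha_j$, the equation of $(\ref{E.6.2})$ is equivalent to $s_k=s_is_j$; at an index $(i<j)\notin\mathcal{J}_\varphi$ one has $\varphi^k_{ij}=0$ and the corresponding equation of $s\ast\varphi=\varphi$ is vacuous. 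Therefore the solution set of $s\ast\varphi=\varphi$ in $\G_n$ is exactly $\{s\in\G_n:s_k=s_is_j\ \text{for all}\ (i<j)\in\mathcal{J}_\varphi\}=\Aut(\varphi)_0^{\mathrm{T}}$, and for any $\mathcal{A}\subseteq\mathcal{J}_\varphi$ the system $(\ref{E.6.2})$ over $\mathcal{A}$ has the same solutions as the system $s_k=s_is_j$, $(i<j)\in\mathcal{A}$.

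Combining these two observations, the condition ``$\mathcal{A}\subseteq\mathcal{J}_\varphi$ and $(\ref{E.6.2})$ over $\mathcal{A}$ is equivalent to $s\ast\varphi=\varphi$'' says precisely ``$\mathcal{A}\subseteq\mathcal{J}_\varphi$ and the system $s_k=s_is_j$, $(i<j)\in\mathcal{A}$, defines $\Aut(\varphi)_0^{\mathrm{T}}$''. Finally, minimality may be read inside $\mathcal{J}_\varphi$ rather than inside $\mathcal{J}$: every $\mathcal{A}$ that is minimal in $\mathcal{J}$ for the defining property already lies in $\mathcal{J}_\varphi$, and all proper subsets of such an $\mathcal{A}$ are again contained in $\mathcal{J}_\varphi$, so ``minimal among subsets of $\mathcal{J}$'' and ``minimal among subsets of $\mathcal{J}_\varphi$'' pick out the same sets. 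This yields the asserted equivalence.

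The one point deserving real care is the characterization of admissibility invoked at the outset, i.e. the equivalence between the cohomological definition (minimality of $\mathcal{A}$ with $\mathrm{rank}(D_1\theta^\alpha)_{\alpha\in\mathcal{A}}$ maximal) and the ``minimal equivalent system'' formulation. Here I would use that $\mathrm{C}^1(\n,\n)^{\mathrm{T}}=\gl_n(\K)^{\mathrm{T}}$ coincides with the space of diagonal matrices $=\mathrm{Lie}(\G_n)$, and that $D_1\theta^\alpha(L)=(L\cdot\varphi)^\alpha=(L_k-L_i-L_j)\varphi^\alpha$ for $L=\mathrm{diag}(L_1,\dots,L_n)$ and $\alpha=(^k_{ij})$; so maximality of $\mathrm{rank}(D_1\theta^\alpha)_{\alpha\in\mathcal{A}}$ is exactly the requirement that the linear system $L_k=L_i+L_j$, $(i<j)\in\mathcal{A}$, cut out $\mathrm{Lie}(\Aut(\varphi)_0^{\mathrm{T}})$, which is the infinitesimal version of the multiplicative condition. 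Passing from this infinitesimal statement to the honest group-theoretic one requires that the subgroup of the torus $\G_n$ defined by equations $s_k=s_is_j$ be connected (equivalently, that a minimal maximal-rank family of the characters $e_k-e_i-e_j$ span a saturated sublattice of $\Z^n$); this should follow from the strict positivity of the weights, since $\alpha_k=\alpha_i+\alpha_j$ with $i,j,k$ pairwise distinct forces $\alpha_k$ strictly above $\alpha_i$ and $\alpha_j$, which lets one order the indices so as to triangulate the relevant integer matrix with unit diagonal. That connectedness check is where I expect whatever genuine work there is to lie.
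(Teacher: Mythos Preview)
Your argument follows the same line as the paper's: the paper does not give a separate proof of this proposition, but treats it as an immediate consequence of the paragraph preceding it, where admissibility is recast as ``$\mathcal{A}$ is a minimal subset of $\mathcal{J}$ such that the system (\ref{E.6.2}) is equivalent to $s\ast\varphi=\varphi$'', together with the observation that minimality forces $\mathcal{A}\subset\mathcal{J}_\varphi$ and that on $\mathcal{J}_\varphi$ the equation (\ref{E.6.2}) reads $s_k=s_is_j$. Your write-up reproduces exactly this reduction.

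Where you go beyond the paper is in flagging the passage from the original rank definition of admissibility (a Lie-algebra condition on the linear forms $L\mapsto (L_k-L_i-L_j)\varphi^k_{ij}$) to the group-theoretic formulation in terms of the multiplicative equations $s_k=s_is_j$. The paper simply asserts the equivalence (``It is just a subset $\mathcal{A}$ of $\mathcal{J}$ such that\ldots''), whereas you correctly note that one needs the subtori cut out by such character equations to be connected, and you sketch a triangularization argument using the strict positivity of the weights. That connectedness point is genuine and is not addressed in the paper; your instinct that it is where the real content lies is right, though your sketch (ordering indices so that $\alpha_k=\alpha_i+\alpha_j$ forces $k>i,j$, hence the relevant lattice vectors $e_k-e_i-e_j$ can be put in unit-upper-triangular form) would need the further remark that a minimal linearly independent family of such vectors can be chosen with pairwise distinct leading indices $k$, which is not automatic when several pairs $(i,j)$ share the same sum weight.
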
  
The laws $\varphi$ of $\L_n^{\mathrm{T}}(\K)$ which have the same subset $\mathcal{J}_{\varphi}\subset\mathcal{J}$ thus have the same admissible sets $\mathcal{A}\subset\mathcal{J}_{\varphi}$, the same group $\mathrm{Aut(\varphi)}_0^{\mathrm{T}}$ and the same maximal torus $\tau$. 
Two equivalent laws under  $\mathrm{G}_n$ have the same subset $\mathcal{J}_{\varphi}$ of $\mathcal{J}$ since Eq (\ref{E.6.2}) implies that a nonzero coordinate remains nonzero under $\mathrm{G}_n$.
\begin{thm}\label{n51}Under the hypothesis of a simple path of weights, each law $\varphi\in\sum_{n}(\mathrm{T})$ admits an admissible set $\mathcal{A}$ of cardinal $|\mathcal{A}|=n-\dim_\K\mathrm{T}$. We have for such a subset $\mathcal{A}$:
\begin{enumerate}
	\item $\varphi$ admits $\mathcal{A}$ as admissible set iff $\varphi^k_{ij}\neq 0$ for all $(^k_{ij})\in\mathcal{A}$ where $i<j$.
	\item All laws of $\L_{n,\varphi_0}^{\mathrm{T},\mathcal{A}}(\K)$ admit $\mathcal{A}$ as admissible set.
	\item $\L_{n,\varphi_0}^{\mathrm{T},\mathcal{A}}(\K)$ is contained in $\sum_{n}\left(\mathrm{T}\right)$ and its isomorphic classes are the traces of the $\Gamma$-orbits.
	\item $\varphi$ admits $\mathcal{A}$ as admissible set iff there is an element $s\in\mathrm{G}_n$ such that $s\ast\varphi\in\L_{n,\varphi_0}^{\mathrm{T},\mathcal{A}}(\K)$.
	\item $\mathrm{H}^2(\varphi_0,\varphi_0)^{\mathrm{T}}$ is the Zariski's tangent space to $\L^{\mathrm{T},\mathcal{A}}_{n,\varphi_0}(\K)$ at $\varphi_0$.
\end{enumerate}
\end{thm}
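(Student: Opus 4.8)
The plan is to run everything through the explicit description of the diagonal action of $\G_n=(\K^*)^n$ and the characterization of admissible subsets given in the preceding Proposition. For $\alpha=(^k_{ij})\in\mathcal{J}$ and $s\in\G_n$ write $c_\alpha(s):=s_ks_i^{-1}s_j^{-1}$, and for $\mathcal{A}\subseteq\mathcal{J}$ set $\G_n(\mathcal{A}):=\{s\in\G_n:c_\alpha(s)=1,\ \alpha\in\mathcal{A}\}$, the solution set of the system $s_k=s_is_j$, $\alpha\in\mathcal{A}$; this depends only on $\mathcal{A}$, and its codimension in $\G_n$ equals the rank of the integer vectors $\chi_\alpha$ $(\alpha\in\mathcal{A})$ whose $k$-th coordinate is $1$, whose $i$-th and $j$-th coordinates are $-1$, and whose other coordinates vanish. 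Let $\widehat{\mathrm{T}}\subseteq\G_n$ be the connected subgroup with Lie algebra $\mathrm{T}$: since $\mathrm{T}\subseteq\Der(\varphi)$ for every $\varphi\in\L_n^{\mathrm{T}}$ we have $\widehat{\mathrm{T}}\subseteq\Aut(\varphi)_0^{\mathrm{T}}$, while $\mathrm{Lie}(\Aut(\varphi)_0^{\mathrm{T}})=\Der(\varphi)^{\mathrm{T}}$, which by the structural description of $\Sigma_n(\mathrm{T})$ is a torus $\tau\supseteq\mathrm{T}$ with $\varphi\in\Sigma_n(\mathrm{T})$ precisely when $\tau=\mathrm{T}$, i.e. when $\Aut(\varphi)_0^{\mathrm{T}}=\widehat{\mathrm{T}}$. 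Hence for $\varphi\in\Sigma_n(\mathrm{T})$ the preceding Proposition reads: $\mathcal{A}$ is admissible at $\varphi$ iff $\mathcal{A}\subseteq\mathcal{J}_\varphi$ and $\mathcal{A}$ is minimal with $\G_n(\mathcal{A})=\widehat{\mathrm{T}}$. In particular minimality forces the $\chi_\alpha$, $\alpha\in\mathcal{A}$, to be $\Z$-linearly independent (a dependent one could be dropped without changing $\G_n(\mathcal{A})$), so $|\mathcal{A}|=\mathrm{codim}\,\G_n(\mathcal{A})=n-\dim_\K\mathrm{T}$; existence of such an $\mathcal{A}$ is the usual exchange-basis argument. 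This proves the opening assertion.

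For (1) and (2) fix an admissible $\mathcal{A}$ at a reference $\varphi_0\in\Sigma_n(\mathrm{T})$, so $(\varphi_0)^\alpha\neq 0$ for $\alpha\in\mathcal{A}$, $\G_n(\mathcal{A})=\widehat{\mathrm{T}}$, and $\mathcal{A}$ is minimal for the latter. If $\mathcal{A}$ is admissible at $\varphi\in\Sigma_n(\mathrm{T})$ then $\mathcal{A}\subseteq\mathcal{J}_\varphi$, i.e. $\varphi^\alpha\neq 0$ for $\alpha\in\mathcal{A}$; conversely, if $\varphi\in\Sigma_n(\mathrm{T})$ satisfies $\varphi^\alpha\neq 0$ for all $\alpha\in\mathcal{A}$ then $\mathcal{A}\subseteq\mathcal{J}_\varphi$, $\G_n(\mathcal{A})=\widehat{\mathrm{T}}=\Aut(\varphi)_0^{\mathrm{T}}$, and $\mathcal{A}$ is still minimal for this (a proper subset of $\mathcal{A}$ is a subset of $\mathcal{J}_{\varphi_0}$ too, so it cannot define $\widehat{\mathrm{T}}$, by minimality at $\varphi_0$), hence $\mathcal{A}$ is admissible at $\varphi$: this is (1). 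For (2), let $\varphi\in\L_{n,\varphi_0}^{\mathrm{T},\mathcal{A}}(\K)$, so $\varphi^\alpha=(\varphi_0)^\alpha\neq 0$ for $\alpha\in\mathcal{A}$; every $s\in\Aut(\varphi)\cap\G_n$ satisfies $(c_\alpha(s)-1)\varphi^\alpha=0$, hence $s\in\G_n(\mathcal{A})=\widehat{\mathrm{T}}$, so $\Aut(\varphi)_0^{\mathrm{T}}=\widehat{\mathrm{T}}$, whence $\varphi\in\Sigma_n(\mathrm{T})$ — this is also the inclusion asserted in (3) — and then (1) shows $\mathcal{A}$ is admissible at $\varphi$.

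For (4), admissibility of $\mathcal{A}$ is invariant under $\G_n$: replacing $\varphi$ by $s\ast\varphi$, $s\in\G_n$, leaves $\mathcal{J}_\varphi$, $\Aut(\varphi)_0^{\mathrm{T}}$ and the orbit $[\varphi]$ unchanged (the last two because $\G_n$ is abelian and contains $\Aut(\varphi)_0^{\mathrm{T}}$). Thus if $s\ast\varphi$ lies in the slice then $\mathcal{A}$ is admissible at $s\ast\varphi$ by (2), hence at $\varphi$; conversely, if $\mathcal{A}$ is admissible at $\varphi$ then $\varphi^\alpha\neq 0\neq(\varphi_0)^\alpha$ for $\alpha\in\mathcal{A}$, and one must solve $c_\alpha(s)=(\varphi_0)^\alpha/\varphi^\alpha$, $\alpha\in\mathcal{A}$, for $s\in\G_n$; since the $\chi_\alpha$ are $\Z$-linearly independent, the morphism $\G_n\to(\K^*)^{\mathcal{A}}$, $s\mapsto(c_\alpha(s))_{\alpha\in\mathcal{A}}$, is surjective ($\K^*$ being divisible), so a solution exists and $s\ast\varphi\in\L_{n,\varphi_0}^{\mathrm{T},\mathcal{A}}(\K)$. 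Statement (5) is the case $\mathrm{R}=\mathrm{T}$ of the slice discussion of Section 2 together with Remark \ref{r2.1}: the slice is cut out of $\L_n^{\mathrm{T}}$ by fixing $X^\alpha=(\varphi_0)^\alpha$ for $\alpha\in\mathcal{A}$, whose differentials at $\varphi_0$ are the $(D_1\theta^\alpha)_{\alpha\in\mathcal{A}}$ and span $\mathrm{B}^2(\varphi_0,\varphi_0)^{\mathrm{T}}$ by admissibility, while $T_{\varphi_0}\L_n^{\mathrm{T}}(\K)=\mathrm{Z}^2(\varphi_0,\varphi_0)^{\mathrm{T}}$ by \textbf{1.7 a)}; hence the tangent space to the slice is a complement of $\mathrm{B}^2$ in $\mathrm{Z}^2$, canonically $\mathrm{H}^2(\varphi_0,\varphi_0)^{\mathrm{T}}$.

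Finally, for the isomorphism part of (3): if $\varphi,\varphi'\in\L_{n,\varphi_0}^{\mathrm{T},\mathcal{A}}(\K)$ and $\varphi'=s\ast\varphi$ with $s\in\G_n$, then $c_\alpha(s)=(\varphi')^\alpha/\varphi^\alpha=1$ for $\alpha\in\mathcal{A}$, so $s\in\G_n(\mathcal{A})=\widehat{\mathrm{T}}\subseteq\Aut(\varphi)$ and $\varphi'=\varphi$; thus every $\G_n$-orbit meets the slice in at most one point, and by (4) it meets it exactly when the law admits $\mathcal{A}$, so the slice is a cross-section for $\G_n$ on the union of those orbits. By Proposition \ref{p5.1} the isomorphism relation on $\Sigma_n(\mathrm{T})$ is that of $\mathrm{H}$-orbits with $\mathrm{H}_0=\G_n$, so it descends to the action of the finite group $\Gamma=\mathrm{H}/\G_n$ on $\Sigma_n(\mathrm{T})/\G_n$, i.e. on the slice; its classes are the traces of the $\Gamma$-orbits of the continuous family. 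The step I expect to require the most care is assembling (3): one must recognize that the slice is an \emph{exact} cross-section for $\G_n$, which rests on the nonvanishing of the orbital coordinates, and then identify the residual isomorphism relation with that of $\Gamma$; the remaining items reduce to the $\G_n$-invariance of admissibility and to the linear independence of the characters $\chi_\alpha$, both immediate from the preceding Proposition, together with the bookkeeping that identifies $\Der(\varphi)^{\mathrm{T}}$ with $\mathrm{Lie}(\Aut(\varphi)_0^{\mathrm{T}})$ for $\varphi\in\Sigma_n(\mathrm{T})$.
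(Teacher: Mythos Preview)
Your argument is correct and follows essentially the same route as the paper's own (very terse) proof: both exploit the explicit diagonal action $(s\ast X)^k_{ij}=\frac{s_k}{s_is_j}X^k_{ij}$, the characterization of admissible sets via the system $s_k=s_is_j$, and the freeness of the characters $c_\alpha$ on $\mathcal{A}$ to hit any prescribed nonzero values, with (5) coming from the transversality of the slice. Your write-up simply supplies the details the paper leaves implicit (the cross-section property of the slice for $\G_n$, the descent to $\Gamma$, and the bookkeeping for $|\mathcal{A}|=n-\dim_\K\mathrm{T}$).
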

\begin{proof} The laws $\varphi$ which admit $\mathcal{A}$ as admissible set satisfy $\varphi^\alpha\neq 0$ for all $\alpha\in\mathcal{A}$. This condition is sufficient if the cardinal of $\mathcal{A}$ is maximal. This proves the statements $1,2$ and $3$. 
The orbit $\left[\varphi_0\right]$ of $\varphi_0$ under $(\K^*)^n$ is given by
$$\left[\varphi_0\right]=\left\{(\frac{s_k}{s_is_j}(\varphi_0)_{ij}^k):s=(s_1,...,s_n)\in\mathrm{G}_n\right\}.$$
The components indexed by an admissible set $\mathcal{A}$ are non null and the corresponding terms $(\frac{s_k}{s_is_j})$ are free and can take arbitrary non null values; there thus  exits in the orbit of $\varphi_0$ a representative $\varphi$ for any arbitrary choice of non null components $(\varphi^\alpha)_{\alpha\in\mathcal{A}}.$ If $|\mathcal{A}|$ is maximal we obtain $4$, and $5$ results from transversal property of the slice.
\end{proof}\\
\textbf{Consequence.} Let $\Omega(\mathcal{A})$ be the set of the laws admitting $\mathcal{A}$ as admissible set. Then $\Omega(\mathcal{A})$ is an open set and $\sum_n(\mathrm{T})=\cup_{\mathcal{A}} \Omega(\mathcal{A})$. The slices associated with a same $\mathcal{A}$ are isomorphic and identified with the quotient $\Omega(\mathcal{A})/\G_n$. Varying the admissible parts $\mathcal{A}$, the slices are affine local charts of $\Sigma_n(\mathrm{T})/\G_n$ giving continuous families in Theorem \ref{n51} 3.\\ 
\textbf{Convention.} From Theorem \ref{n51}, 4. we may fix arbitrary values of the components $\varphi^{\mathcal{A}}$ of a law $\varphi$ in $\Sigma_n(\mathrm{T})$. One adopts the convention $X^k_{ij}=1$ for all $(^k_{ij})\in\mathcal{A}$. The corresponding subscheme, denoted by $\L_n^{\mathrm{T},\mathcal{A}}$, is isomorphic to $\L_{n,\varphi_0}^{\mathrm{T},\mathcal{A}}$.
\section{Examples}
All examples considered in this section satisfy the hypothesis of Proposition \ref{n3.3}, then the calculation of versal deformations of $\phi_m=\mathrm{T}\ltimes\varphi_n$ in $\L_m$ can be deduced from those of $\varphi_n$ in $\L^{\mathrm{T}}_n$. Also versal deformations of $\varphi_{n}\in\L_{n}^{\mathrm{T}}$ are calculated from that of $\varphi_{n_0}\in\L_{n_0}^{\mathrm{T}}$ by successive central extensions with $n_0$ the dimension where the maximal torus $\mathrm{T}$ appears.
\subsection{First Example}
Let $\mathfrak{f}_n$ be a Lie defined by $\varphi_0(e_1,e_i)=e_{1+i}$, $(i\leq n-1)$ and $\varphi_0(e_2,e_i)=e_{i+2}$ $(2<i<n-1)$. It admits a maximal torus $\mathrm{T}$ defined by its weights generated by $\alpha_i=i\alpha$, with $n\geq 5$. \\
The coordinates $X^k_{ij}$ of $\L_n^{\mathrm{T}}$ may be indexed by the weights themselves. One can write $X_{ij}$ instead of $X^k_{ij}$ and indexed by the pairs $(i<j)$ such that $\alpha_i+\alpha_j=\alpha_{i+j}$ is a weight. The Jacobi's polynomials are
\begin{equation}
	\mathrm{J}_{ijk}=X_{i,j}X_{i+j,k}+X_{j,k}X_{j+k,i}+X_{k,i}X_{k+i,j}
\end{equation}
for $i<j<k$ and $\alpha_i+\alpha_j+\alpha_k=\alpha_{i+j+k}\in\pi_n$.
Let $\K[t]_{(t)}$ denote the localization of $\K[t]$ at the prime $(t)$, which is isomorphic to  $\left\{\frac{p}{q}:p,q\in\K\left[u\right],q(0)\neq 0\right\}.$
\begin{thm} 
The set $\mathcal{A}_n=\left\{(23),(1k),1<k<n\right\}$ is admissible at $\varphi_0$.\\
The local ring $\O_{\varphi_0}^{\mathrm{T},\mathcal{A}_n}$ of the slice $\L^{\mathrm{T},\mathcal{A}_n}_{n,\varphi_0}$ at $\varphi_0$ and the versal deformation $X=(X_{ij})$ of $\varphi_0$ in $\L_n^{\mathrm{T}}$  associated with $\mathcal{A}_n$ are given by\\
$\textbf{1)}$ for $n=5,6$, $\O_{\varphi_0}^{\mathrm{T},\mathcal{A}_n}=\K$, $X=\varphi_0$,\\
$\textbf{2)}$ for $7\leq n\leq 11$, $\O^{\mathrm{T},\mathcal{A}_n}_{\varphi_0}=\K[t]_{(t)}$, $X$ is defined by (\ref{e6.25}) to (\ref{e6.43}),\\
$\textbf{3)}$ for $n\geq 12$, $\O_{\varphi_0}^{\mathrm{T},\mathcal{A}_n}=\K[u]/\left\langle u^5\right\rangle$,\\
$X_{ij}=1$, $(ij)\in\mathcal{A}_n$, $X_{24}=1$, $ X_{25}=1-t$, $X_{26}=1-2t$,\\
$X_{2,m-2}=1+(6-m)t+\frac{3m^2-45m+168}{4} t^2+\frac{-4m^3+105m^2-923m+2712}{8} t^3+\\
\frac{5m^4-192m^3+2812m^2-18579m+46608}{16}t^4$ for $n\geq m\geq 9$,\\
$X_{34}=X_{35}=t$, $X_{36}=t-\frac{3}{2}t^2+\frac{3}{4}t^3-\frac{3}{8}t^4$, 
$X_{37}=t-3t^2+\frac{3}{2}t^3-\frac{3}{4}t^4$,\\
$X_{3,m-3}=t+\frac{3}{2}(8-m)t^2+\frac{6m^2-111m+516}{4} t^3+\frac{-10m^3+303m^2-3110m+10794}{8} t^4$\\
for $n\geq m \geq 11,$
$X_{45}=X_{46}=\frac{3}{2}t^2-\frac{3}{4}t^3+\frac{3}{8}t^4,$ \\
$X_{4m-4}=\frac{3}{2}t^2+\frac{-12m+117}{4}t^3+\frac{30m^2-636m+3423}{8}t^4$, for $n\geq m\geq 11$\\
$X_{56}=X_{57}=3t^3-\frac{27}{4}t^4$\\
$X_{5m-5}=3t^3+\frac{-30m+333}{4}t^4$, for $n\geq m\geq 12$,\\
$X_{6m-6}=\frac{15}{2}t^4$ for $m\geq 13$ and $X_{ij}=0$, for $6<i<j$, with $t^5=0$.
\end{thm}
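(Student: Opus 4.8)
The plan is to run the algorithm of Section~2 directly on the slice $\L^{\mathrm{T},\mathcal A_n}_{n,\varphi_0}$, exploiting the very explicit shape of the Jacobi polynomials available here, and to organize the induction on $n$ by the filiation of Section~4.3, using $\nu_n=\dim\mathrm{H}_2(\varphi_n)_{(n+1)\alpha}$ as in Theorem~\ref{t6.1} as a cross‑check.

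First I would verify that $\mathcal A_n$ is admissible. Since the path of weights $\pi_n=(i\alpha)_{1\le i\le n}$ is simple, I may use the characterization of admissible sets preceding Theorem~\ref{n51}: $\mathcal A_n$ is admissible iff it is a minimal subset of $\mathcal J_{\varphi_0}=\{(i<j):(\varphi_0)_{ij}\neq 0\}$ for which the system $s_{i+j}=s_is_j$, $(i,j)\in\mathcal A_n$, cuts out $\Aut(\varphi_0)_0^{\mathrm{T}}\subset\G_n=(\K^\ast)^n$. Solving $s\ast\varphi_0=\varphi_0$ gives $s_i=s_1^{\,i}$, so $\Aut(\varphi_0)_0^{\mathrm{T}}$ is one–dimensional; this also records $\varphi_0\in\Sigma_n(\mathrm{T})$, i.e. $\mathrm{T}$ maximal, for $n\ge 5=n_0$. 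The $n-1$ equations indexed by $\mathcal A_n=\{(23)\}\cup\{(1k):1<k<n\}$ force $s_3=s_1s_2$, $s_{k+1}=s_1s_k$ and $s_5=s_2s_3$, hence $s_2=s_1^2$ and $s_i=s_1^{\,i}$, while dropping any one of them enlarges the locus; as $|\mathcal A_n|=n-1=n-\dim_\K\mathrm{T}$ is maximal by Theorem~\ref{n51}, $\mathcal A_n$ is admissible.

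Next I would set up the slice: putting $X_\alpha=1$ for $\alpha\in\mathcal A_n$, the remaining coordinates are the $X_{ij}$ with $(i,j)\notin\mathcal A_n$ and $\alpha_i+\alpha_j\in\pi_n$, subject to $\mathrm{J}_{ijk}=0$ (with $X_{ba}=-X_{ab}$). Inserting the fixed values $X_{1,\cdot}=1$, the relations $\mathrm{J}_{1,i,k}=0$ with $2\le i<k$ collapse to the affine recursion $X_{i,k+1}=X_{i,k}-X_{i+1,k}$, while $\mathrm{J}_{1,i,i+1}=0$ gives $X_{i,i+2}=X_{i,i+1}$ and $\mathrm{J}_{1,2,3}=0$ gives $X_{2,4}=1$. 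Hence every coordinate is a $\K$–linear combination of the \emph{seeds} $X_{i,i+1}$, $i\ge 3$; set $t:=X_{3,4}$. Substituting these linear expressions into the surviving relations $\mathrm{J}_{ijk}=0$ with $2\le i$ turns each into a quadratic polynomial in the seeds; the first one in which $X_{i+1,i+2}$ occurs (for instance $\mathrm{J}_{2,3,4}=0$, which determines $X_{4,5}=\tfrac{3t^2}{2+t}$) has the form $(\text{unit})\cdot X_{i+1,i+2}-(\text{polynomial in }t,X_{4,5},\dots,X_{i,i+1})$, so by the formal implicit function argument of Lemma~\ref{l2.1} I may solve recursively, obtaining $X_{4,5},X_{5,6},\dots$ as power series (in fact rational functions regular at $0$) in $t$; this is the map $g$ of Proposition~\ref{p2.2}, and in particular $\dim_\K\mathrm{H}^2(\varphi_0,\varphi_0)^{\mathrm{T}}=1$. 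What remains are the obstruction equations $\Omega(t)=0$, and $\O^{\mathrm{T},\mathcal A_n}_{\varphi_0}$ is $\K[t]_{(t)}$ modulo the ideal they generate.

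Then I would go through the values of $n$. For $n\le 6$ one has $\{(i,j):i<j,\ i+j\le n\}\subseteq\mathcal A_n\cup\{(2,4)\}$ and $X_{2,4}$ is forced, so $\O^{\mathrm{T},\mathcal A_n}_{\varphi_0}=\K$ and $\varphi_0$ is rigid; equivalently $\nu_5=1$, so $\O_6$ is a trivial quotient of $\O_5=\K$. For $7\le n\le 11$ only relations $\mathrm{J}_{ijk}=0$ with $i+j+k\le 11$ occur, and a term–by–term check shows that each either merely determines a new coordinate as a rational function of $t$ or is identically satisfied — the low‑order powers of $t$ cancelling each time — so no relation constrains $t$, $\O^{\mathrm{T},\mathcal A_n}_{\varphi_0}\cong\K[t]_{(t)}$, and the closed forms of the statement are exactly the solutions of the recursions and of $\mathrm{J}_{2,3,4}=0,\ \mathrm{J}_{2,4,5}=0,\dots$ (in the filiation language: $\nu_6=2$ produces the single parameter $t$, and $\nu_n=1$ with trivial quotient for $7\le n\le 11$). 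The first genuine obstruction appears at $n=12$ from $\mathrm{J}_{3,4,5}=0$ — the first relation all of whose indices exceed $2$ and whose index sum is $\le n$: after substituting the series for $X_{4,5},X_{5,6},X_{5,7},X_{3,9},X_{4,8},\dots$, all terms of degree $<5$ in $t$ cancel and one is left with $c\,t^5=0$ with $c\in\K^\ast$, whence $\O^{\mathrm{T},\mathcal A_{12}}_{\varphi_0}\cong\K[t]_{(t)}/(t^5)\cong\K[u]/\langle u^5\rangle$. For $n>12$ every newly appearing Jacobi relation is a sum of products of two coordinates whose $t$–orders force it into the ideal $(t^5)$ (the coordinates $X_{i,j}$ with $i\ge 6$ being $O(t^4)$ and those with $i\ge 7$ vanishing), so no further constraint arises, and reading the coordinates off the recursions yields the listed closed forms with $t^5=0$.

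I expect the main obstacle to be the bookkeeping in this last step: verifying, relation by relation, the cancellation of the low–order terms in $t$, and especially that the obstruction at $n=12$ is precisely $t^5$ — neither $t^4$ nor $t^6$ — and that nothing of order $<5$ is reintroduced for $n>12$. This rests on having the explicit power–series (rational‑function) solutions for the seeds $X_{i,i+1}(t)$ in hand, which is exactly the content of the closed formulas in the statement; once those are established the verification is routine but lengthy.
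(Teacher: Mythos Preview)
Your proposal is correct and follows essentially the same approach as the paper's proof: both proceed by the direct filiation of Section~4, adding one weight at a time, solving the new Jacobi relations $\mathrm{J}_{ijk}=0$ for the new coordinates as rational functions of the single parameter $t=X_{34}$ that appears at $n=7$, finding no obstruction through $n=11$, the first obstruction $t^5\cdot(\text{unit})=0$ at $n=12$ (the paper records it as $\mathrm{J}_{237}=-\mathrm{J}_{246}=\mathrm{J}_{345}=\frac{9t^5(10t-1)}{(2+t)^2(1-t)^2}$), and then checking that for $n>12$ the remaining relations lie in $(t^5)$. Your organization via the recursion $X_{i,k+1}=X_{i,k}-X_{i+1,k}$ from $\mathrm{J}_{1ik}=0$ and the ``seeds'' $X_{i,i+1}$ is exactly the paper's equation for $X_{j,n+1-j}$ repackaged, and your admissibility argument is a welcome addition the paper leaves implicit.
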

\begin{proof} The Lie algebra $\mathfrak{f}_n$ is built by successive unidimensional central extensions of nilpotent Lie algebras where the maximal torus $\mathrm{T}$ appears for $n=5$.\\
-For $n=6$ one obtains two new coordinates $X_{15},X_{24}$ and one relation $\J_{123}$ i.e.
\begin{equation}
	X_{24}=X_{15}=1.
\end{equation}
-For $n=7$ we have three new coordinates $X_{16},X_{25},X_{34}$ and one Jacobi's polynomial $\J_{124}$ i.e.
\begin{equation}\label{e6.25}
	X_{34}-1+X_{25}=0
\end{equation}
-For $n=8$ we have three new coordinates $X_{17},X_{26},X_{35}$ and two Jacobi's polynomials $\J_{125}$ and $\J_{134}$ i.e.
\begin{equation}\label{e6.31}
	X_{17}=1, X_{26}=1-2t, X_{35}=t.
\end{equation}
-For $n=9$ we have four new coordinates $X_{18},X_{27},X_{36},X_{45}$ and three Jacobi's polynomials $\J_{126},\J_{135},\J_{234}$, 
for $t\neq -2$ we have
\begin{equation}\label{e6.32}
	X_{18}=1, \ X_{27}=\frac{2-5t}{2+t}, \ X_{36}=\frac{2t-2t^2}{2+t}, \ X_{45}=\frac{3t^2}{2+t}.
\end{equation}
(\ref{e6.32}) is valid in the local ring at each point of $\L^{\mathrm{T},\mathcal{A}_9}_9(\K)$ since the projection of $2+t$ on $\K$ is different from zero and $2+t$ is invertible if $t\neq -2$.\\
For $n=10$ we have four new coordinates $X_{19},X_{28},X_{37},X_{46}$ and four Jacobi's polynomials $\J_{127},\J_{136},\J_{145},\J_{235}$, i.e.
\begin{equation}\label{e6.37}
	X_{19}=1, \ X_{28}=\frac{2-7t+5t^2}{2+t}, \ X_{37}=\frac{2t-5t^2}{2+t}, \ X_{46}=\frac{3t^2}{2+t}.
\end{equation}
in the associated local ring at each value $\mathrm{pr}(t)\in\K-\left\{-2\right\}$.\\
-For $n=11$ we have five new coordinates $X_{110},X_{29},X_{38},X_{47},X_{56}$ and five Jacobi's polynomials $\J_{128},\J_{137},\J_{146},\J_{236},\J_{245}$ which give
\begin{equation}	X_{110}=1, X_{29}=\frac{2-10t+16t^2-5t^3}{2(1-t^2)},X_{38}=\frac{4t-16t^2+8t^3-5t^4}{2(2+t)(1-t^2)},
\end{equation}
\begin{equation}\label{e6.43}
	X_{47}=\frac{6t^2-12t^3+15t^4}{2(2+t)(1-t^2)},X_{56}=\frac{12t^3-21t^4}{2(2+t)(1-t^2)}.
\end{equation}
in the associated local ring at each value $\mathrm{pr}(t)\in\K-\left\{-2,\pm 1\right\}$.\\
-For $n=12$ we have five new coordinates $X_{111},X_{210},X_{39},X_{48},X_{57}$ and seven Jacobi's polynomials $\J_{129},\J_{138},\J_{147},\J_{156},\J_{237},\J_{246},\J_{345}$, i.e.
\begin{align}\label{e6.51}
	X_{210}=\frac{4-22t+44t^2-26t^3+36t^4}{2(2+t)(1-t^2)},X_{39}=\frac{4t-22t^2+32t^3-41t^4}{2(2+t)(1-t^2)},\\X_{48}=\frac{6t^2-24t^3+36t^4}{2(2+t)(1-t^2)},X_{57}=\frac{12t^3-21t^4}{2(2+t)(1-t^2)}
\end{align}
\begin{equation}
	\J_{237}=-\J_{246}=\J_{345}=\frac{9t^5(10t-1)}{(2+t)^2(1-t)^2}=0.
\end{equation}
For $t=0$ (resp. $t=1/10$), the law corresponds to the Lie algebra $\mathfrak{f}_{12}$ (resp. Witt Lie algebra $\mathfrak{w}_{12}$), and then $\L_{12}^{\mathrm{T},\mathcal{A}_{12}}(\K)=\left\{\mathfrak{f}_{12},\mathfrak{w}_{12}\right\}$. We have $\O_{\mathfrak{f}_{12}}^{\mathrm{T},\mathcal{A}_{12}}\simeq\K[u]/\left\langle u^5\right\rangle$ with a nilpotent element $t=\overline{u}$ of order $5$, and $\O_{\mathfrak{w}_{12}}^{\mathrm{T},\mathcal{A}_{12}}\simeq\K$. It follows from Theorem \ref{t4.1} that $\mathfrak{f}_{12}$ and $\mathfrak{w}_{12}$ are formal rigid in $\L_{12}^{\mathrm{T}}$. Now we reason by induction on $n$. For $n+1\geq 13$, we separately study the central extensions of $\mathfrak{f}_n$ and of $\mathfrak{w}_n$ proceeding by induction. Let $Y_{ij}$ denote the ancient structure constants for $i+j\leq n$ and $X_{ij}$ the new variables for $i+j= n+1$. The relations $	\J_{1jn-j}=X_{j+1,n-j}-Y_{j,n-j}+X_{j,n+1-j}=0$
give
\begin{equation}\label{e6.56}
	X_{j,n+1-j}=\sum_{k=j}^{j'}(-1)^{k-j}Y_{k,n-k}+(-1)^{j'-j+1}X_{j'+1,n-j'}
\end{equation}
and it suffices to write another relation for fixing $X_{ij}$ in function of the $Y_{ij}$. In particular, the relation $\J_{24n-5}=0$ fixes $X_{6n-5}$.
For $\mathfrak{w}_n$, we obtain only the Witt Lie algebra $\mathfrak{w}_{n+1}$ and there are not new parameters; the others Jacobi's relations are automatically satisfied.\\
For $\mathfrak{f}_n$, the relation (\ref{e6.56}) and $\J_{24n-5}=0$ fix the structure constants at neighbourhood of $\mathfrak{f}_{n+1}$ which verify the given relations in the theorem. Hence there are not new parameter. We must verify that the remained Jacobi's relations do not reduce the order of nilpotency of the parameter $t$. It is obvious to see that the Jacobi's polynomials $\J_{ijk}$ $(i<j<k)$ are vanished for $j\geq 6$, and it remains to see that $\J_{ijk}$ for $2\leq i\leq j\leq 5$ are null too.
\end{proof}
\begin{cor}
\begin{enumerate}
	\item The slice $\L_{n,\varphi_0}^{\mathrm{T},\mathcal{A}_n}$ associated with $\mathcal{A}_n$ is isomorphic to \\$\mathrm{Spec}(\K[u]/\left\langle u^5(10u-1)\right\rangle)$, for $n\geq 12$, where $\overline{u}=\frac{1}{10}$ (resp. $\overline{u}=0$)  corresponds to the Witt Lie algebra $\mathfrak{w}_{n}$ (resp. $\mathfrak{f}_{n}$).
	\item The versal deformation of $\mathfrak{w}_n$ associated with $\mathcal{A}_n$ is constant, $\O_{\mathfrak{w}_n}^{\mathrm{T},\mathcal{A}_n}\simeq\K$.
\end{enumerate}
\end{cor}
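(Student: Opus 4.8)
The plan is to deduce both statements from the preceding theorem by reorganising its computation so as to describe the \emph{whole} slice scheme $\L^{\mathrm{T},\mathcal{A}_n}_{n,\varphi_0}$ rather than only its local ring at $\mathfrak{f}_n$, and then to propagate this description up the central extension tower by Theorem \ref{t6.1}.

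First I would settle the base case $n=12$. In the computation proving the theorem, once the coordinates $X_{210},X_{39},X_{48},X_{57}$ adjoined at level $12$ have been eliminated in favour of a single parameter $t$ through the relations (\ref{e6.51}), the surviving Jacobi constraints are exactly $\J_{237}=-\J_{246}=\J_{345}=9t^{5}(10t-1)/((2+t)^{2}(1-t)^{2})=0$. The denominators $(2+t)$ and $(1-t)$ are units in the coordinate ring of the slice, since they specialise to nonzero scalars at both $\K$-points $t=0$ ($\mathfrak{f}_{12}$) and $t=1/10$ ($\mathfrak{w}_{12}$), and by the theorem these are the only points of $\L^{\mathrm{T},\mathcal{A}_{12}}_{12,\varphi_0}(\K)$; hence the slice ring is $\K[t]/\langle t^{5}(10t-1)\rangle$, and renaming $t=u$ gives part 1 for $n=12$. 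Localising at the maximal ideal $(u)$, resp. $(u-1/10)$ — where $10u-1$, resp. $u^{5}$, is invertible — recovers $\O_{\mathfrak{f}_{12}}^{\mathrm{T},\mathcal{A}_{12}}\cong\K[u]/\langle u^{5}\rangle$ and $\O_{\mathfrak{w}_{12}}^{\mathrm{T},\mathcal{A}_{12}}\cong\K$, as in the theorem.

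Then I would run the induction $n\to n+1$ for $n\geq 12$ using Theorem \ref{t6.1} applied to the direct filiation $(\L^{\mathrm{T},\mathcal{A}_n}_{n,\varphi_0})_n$. Over each of the two points $\mathfrak{f}_n,\mathfrak{w}_n$ the fibre in the slice $\L^{\mathrm{T},\mathcal{A}_{n+1}}_{n+1,\varphi_0}$ is a single law ($\mathfrak{f}_{n+1}$, resp. $\mathfrak{w}_{n+1}$), so $\nu_n=1$ in each case and $\L^{\mathrm{T},\mathcal{A}_{n+1}}_{n+1,\varphi_0}$ again has exactly two $\K$-points. By case 2 of Theorem \ref{t6.1}, $\O_{\mathfrak{w}_{n+1}}^{\mathrm{T},\mathcal{A}_{n+1}}$ is a quotient of $\O_{\mathfrak{w}_{n}}^{\mathrm{T},\mathcal{A}_{n}}\cong\K$, hence equals $\K$; and $\O_{\mathfrak{f}_{n+1}}^{\mathrm{T},\mathcal{A}_{n+1}}$ is a quotient of $\O_{\mathfrak{f}_{n}}^{\mathrm{T},\mathcal{A}_{n}}\cong\K[u]/\langle u^{5}\rangle$ which, by the theorem's verification that the Jacobi relations not used to eliminate the new coordinates $X_{j,n+1-j}$ (via (\ref{e6.56}) coming from $\J_{1jn-j}=0$, and $\J_{24n-5}=0$) leave the nilpotency order of the parameter unchanged, is all of $\K[u]/\langle u^{5}\rangle$. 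Thus the level-$(n+1)$ slice ring is a finite-dimensional $\K$-algebra with two maximal ideals, hence the product of its two localisations, $\K[u]/\langle u^{5}\rangle\times\K\cong\K[u]/\langle u^{5}(10u-1)\rangle$ by the Chinese Remainder Theorem; this completes part 1. For part 2, the point $\mathfrak{w}_n$ lies in the same slice scheme (its admissible coordinates being $1$, as for $\mathfrak{f}_n$), and $\mathcal{A}_n$ is admissible at it since the corresponding components are nonzero; localising $\K[u]/\langle u^{5}(10u-1)\rangle$ at the maximal ideal $(u-1/10)$ makes $u^{5}$ invertible, so $\O_{\mathfrak{w}_n}^{\mathrm{T},\mathcal{A}_n}\cong\K$, whence by Corollary \ref{N21}(1) (read in $\L^{\mathrm{T}}_n$ via Remark \ref{r2.1}) the versal deformation $\pi:\O_{\mathfrak{w}_n}\to\O_{\mathfrak{w}_n}^{\mathrm{T},\mathcal{A}_n}$ is the constant one, and $\mathfrak{w}_n$ is formally rigid by Theorem \ref{t4.1}.

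The main obstacle lies entirely in the preceding theorem, on which this argument rests: one needs that the denominators $(2+t),(1-t),(1+t)$ occurring throughout the recursion never vanish at a point of the slice — so that the global slice ring is the \emph{unlocalised} $\K[u]/\langle u^{5}(10u-1)\rangle$, not a localisation of it — and one needs the computational fact that the leftover Jacobi relations impose nothing that would either lower the nilpotency order of $u$ below $5$ or obstruct the Witt branch. These are precisely what the theorem's proof establishes, so the corollary amounts to splitting the zero-dimensional slice into its two local rings.
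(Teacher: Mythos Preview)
Your proposal is correct and follows essentially the same approach that the paper leaves implicit: the corollary is stated without proof, being a direct repackaging of the computations in the preceding theorem, and you have spelled out the passage from the two local rings $\K[u]/\langle u^5\rangle$ and $\K$ to the global slice ring via the Chinese Remainder Theorem (using that a finitely generated $\K$-algebra with finitely many $\K$-points is Artinian, hence a product of its localisations). Your invocation of Theorem~\ref{t6.1} with $\nu_n=1$ to control the induction step, and your observation that the denominators $(2+t),(1\pm t)$ are units at both points of the zero-dimensional slice, make precise exactly what the theorem's proof establishes separately for the $\mathfrak{f}_n$ and $\mathfrak{w}_n$ branches.
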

\begin{rem}The Lie algebra $\mathfrak{f}_n$ is formal $\mathrm{T}$-rigid if $n=5,6$ and $n\geq 12$, and no formal $\mathrm{T}$-rigid for $9 \leq n \leq 11$. If we develop their versal deformations on the competion local ring, they will become Gerstenhaber's deformations.
\end{rem}
\subsection{Second Example}
Let $\mathrm{T}$ be a torus defined by the set $\pi$ of its weights $\alpha_i, 1\leq i\leq 4,\alpha_i+\alpha_j, (i,j)\neq (1,4),\alpha_i+\alpha_j+\alpha_k, (i,j,k)\neq (2,3,4), \delta:=\alpha_1+\alpha_2+\alpha_3+\alpha_4$.\\
The coordinates $X^k_{ij}$ of the scheme $\L_n^{\mathrm{T}}$ may be indexed by the weights themselves. One writes $X_{\alpha,\beta}$ instead of $X^k_{ij}$ since the index $k$ is fixed by the weight $\alpha+\beta$.
The Jacobi's polynomials are
\begin{equation}
	\mathrm{J}_{\alpha,\beta,\gamma}=X_{\alpha,\beta}X_{\alpha+\beta,\gamma}+X_{\beta,\gamma}X_{\beta+\gamma,\alpha}+X_{\gamma,\alpha}X_{\gamma+\alpha,\beta}
\end{equation}
for $\alpha,\beta,\gamma\in\pi$ and $\alpha+\beta+\gamma\in\pi$. 
We consider successive central extensions by the sums of $2,3$ and $4$ weights and $\mathcal{A}$ is an admissible set. \\
The law defined by the $\alpha_i$ for $1\leq i\leq 4$ is abelian and provides the initialization $n_0=4$.\\
Central extension by the sums of $2$ weights:  each $\alpha_i+\alpha_j$ with $(i,j)\neq (1,4)$ is associated to the coordinate $X_{\alpha_i,\alpha_j}$ without Jacobi's relation; set $\mathcal{A}_2=\left\{(\alpha_i,\alpha_j):(i,j)\neq(1,4)\right\}$ and fix  
\begin{equation}
	X_{\alpha,\beta}=1, \quad(\alpha,\beta)\in\mathcal{A}_2.
\end{equation}
Central extensions by the sums of $3$ weights: the weight $\alpha_1+\alpha_3+\alpha_4$ gives coordinates $X_{\alpha_1,\alpha_3+\alpha_4}$ and $X_{\alpha_4,\alpha_1+\alpha_3}$ (since $(i,j)\neq (14)$), and the Jacobi's polynomial $\mathrm{J}_{\alpha_1,\alpha_3,\alpha_4}$ imposes $X_{\alpha_4,\alpha_1+\alpha_3}=-X_{\alpha_1,\alpha_3+\alpha_4}$. In the same fashion, $\alpha_1+\alpha_2+\alpha_4$ gives $X_{\alpha_1,\alpha_2+\alpha_4}$ and $X_{\alpha_4,\alpha_1+\alpha_2}$, and $\mathrm{J}_{\alpha_1,\alpha_2,\alpha_4}$ implies that $X_{\alpha_4,\alpha_1+\alpha_2}=-X_{\alpha_1,\alpha_2+\alpha_4}$. For $\alpha_1+\alpha_2+\alpha_3$ we have $X_{\alpha_1,\alpha_2+\alpha_3}$, $X_{\alpha_2,\alpha_1+\alpha_3}$ and $X_{\alpha_3,\alpha_1+\alpha_2}$, and $\mathrm{J}_{\alpha_1,\alpha_2,\alpha_3}$ gives  $X_{\alpha_2,\alpha_1+\alpha_3}=X_{\alpha_1,\alpha_2+\alpha_3}+X_{\alpha_3,\alpha_1+\alpha_2}$. \\
Set $\mathcal{A}_3=\mathcal{A}_2\cup\left\{(\alpha_4,\alpha_1+\alpha_3),(\alpha_4,\alpha_1+\alpha_2),(\alpha_3,\alpha_1+\alpha_2)\right\}$ and fix $X_{\alpha,\beta}=1$ for all $(\alpha,\beta)\in\mathcal{A}_3-\mathcal{A}_2$. We have 
\begin{equation}	X_{\alpha_4,\alpha_1+\alpha_3}=-X_{\alpha_1,\alpha_3+\alpha_4}=X_{\alpha_4,\alpha_1+\alpha_2}=-X_{\alpha_1,\alpha_2+\alpha_4}=X_{\alpha_3,\alpha_1+\alpha_2}=1
\end{equation}
\begin{equation}
	X_{\alpha_2,\alpha_1+\alpha_3}=X_{\alpha_1,\alpha_2+\alpha_3}+1
\end{equation}
Central extensions by the sums of $4$ weights: $\delta$ gives $X_{\alpha_2,\alpha_1+\alpha_3+\alpha_4}$, $X_{\alpha_3,\alpha_1+\alpha_2+\alpha_4}$, $X_{\alpha_4,\alpha_1+\alpha_2+\alpha_3}$, $X_{\alpha_1+\alpha_2,\alpha_3+\alpha_4}$ and $X_{\alpha_1+\alpha_3,\alpha_2+\alpha_4}$. \\
Set $\mathcal{A}_4=\mathcal{A}_3\cup(\alpha_2,\alpha_1+\alpha_3+\alpha_4)$ and fix $X_{\alpha_2,\alpha_1+\alpha_3+\alpha_4}=1$. The Jacobi's relations $\mathrm{J}_{\alpha_i,\alpha_j,\delta-\alpha_i-\alpha_j}$ for $(i,j)\neq(2,3)$ give the following equations:
\begin{equation}
	X_{\alpha_1+\alpha_2,\alpha_3+\alpha_4}=X_{\alpha_2,\alpha_1+\alpha_3+\alpha_4}=1, X_{\alpha_1+\alpha_3,\alpha_2+\alpha_4}=X_{\alpha_3,\alpha_1+\alpha_2+\alpha_4} 
\end{equation}
\begin{equation}
	X_{\alpha_1+\alpha_3,\alpha_2+\alpha_4}=X_{\alpha_4,\alpha_1+\alpha_2+\alpha_3}-1
\end{equation}
\begin{equation}\label{E5.22}
	X_{\alpha_1,\alpha_2+\alpha_3}X_{\alpha_4,\alpha_1+\alpha_2+\alpha_3}=0
\end{equation}
With the choice of $\mathcal{A}:=\mathcal{A}_4$, we obtain two essentials parameters $u:=X_{\alpha_1,\alpha_2+\alpha_3}$ and $v:=X_{\alpha_4,\alpha_1+\alpha_2+\alpha_3}$ verifying (\ref{E5.22}), and we can state with $\varphi_0\in\L_{13}^{\mathrm{T},\mathcal{A}}$: 
\begin{thm}The slice $\L_{13}^{\mathrm{T},\mathcal{A}}$ is isomorphic to $\mathrm{Spec}(\K[u,v]/(uv))$.
\begin{enumerate}
	\item If $(\varphi_0)_{\alpha_1,\alpha_2+\alpha_3}=(\varphi_0)_{\alpha_4,\alpha_1+\alpha_2+\alpha_3}=0$, then 
	$\varphi_0$ is a singular point at the intersection of two components of the slice, and $\varphi_0+u\varphi_1+v\varphi_2$ with $uv=0$, is a versal deformation of $\varphi_0$ where $\varphi_1$ and $\varphi_2$ are two cocycles with non null classes. The local ring $\O_{\varphi_0}^{\mathrm{T},\mathcal{A}}$ is the localization of $\K[u,v]/\left\langle uv\right\rangle$ at the maximal ideal $\mathrm{P}$ consisting of $p$ such that $p(0,0)=0$, denoted by $\left(\K[u,v]/\left\langle uv\right\rangle\right)_{(\mathrm{P})}$.
	\item If $(\varphi_0)_{\alpha_1,\alpha_2+\alpha_3}\neq 0$ (resp. $(\varphi_0)_{\alpha_4,\alpha_1+\alpha_2+\alpha_3}\neq 0$), then $\varphi_0$ is a regular point at the one component of the slice, and $\varphi_0+u\varphi_1$ (resp.  $\varphi_0+v\varphi_2$) is a versal deformation of $\varphi_0$, where $\varphi_1$ and $\varphi_2$ are two cocycles with non null classes. The local ring $\O_{\varphi_0}^{\mathrm{T},\mathcal{A}}$ is  equal to $\left(\K[u,v]/\left\langle uv\right\rangle\right)_{(\mathrm{P})}$, where the maximal ideal $\mathrm{P}$ consists of $p$ such that $p(u_0,0)=0$, (resp . $p(0,v_0)=0$), and is isomorphic to $\K[\tau]_{(\tau)}$.
\end{enumerate}
\end{thm}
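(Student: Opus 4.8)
The plan is to extract the coordinate ring of $\L_{13}^{\mathrm{T},\mathcal{A}}$ straight from the successive central extensions performed above, and then to settle every assertion by elementary commutative algebra for $\K[u,v]/\langle uv\rangle$. First I would record the defining ideal. Starting from the initialization $n_0=4$ and climbing through the extensions by the weights $\alpha_i+\alpha_j$, then by the $\alpha_i+\alpha_j+\alpha_k$, then by $\delta=\alpha_1+\alpha_2+\alpha_3+\alpha_4$, each new structure constant is either fixed to $1$ (the ones indexed by $\mathcal{A}=\mathcal{A}_4$) or is solved, via the pertinent Jacobi polynomials, as a $\K$-affine-linear combination of the two essential parameters $u:=X_{\alpha_1,\alpha_2+\alpha_3}$ and $v:=X_{\alpha_4,\alpha_1+\alpha_2+\alpha_3}$; the only relation that survives among the essential parameters is $uv=0$, which is exactly equation (\ref{E5.22}). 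It then remains to check that every other Jacobi polynomial $\mathrm{J}_{\alpha,\beta,\gamma}$ with $\alpha+\beta+\gamma\in\pi$ is a consequence of these; this is pure bookkeeping on the set of weights, since for the relevant triples either the total sum or some partial sum fails to lie in $\pi$ (for instance $\alpha_2+\alpha_3+\alpha_4\notin\pi$ and $\alpha_1+\alpha_4\notin\pi$ annihilate the would-be relations indexed by $(\alpha_2,\alpha_3,\alpha_4)$ and $(\alpha_2,\alpha_3,\alpha_1+\alpha_4)$), and the remaining ones reproduce identities already used. Once this is done, the projection $(X^k_{ij})\mapsto (u,v)$ identifies the coordinate ring of the slice with $\K[u,v]/\langle uv\rangle$, whence $\L_{13}^{\mathrm{T},\mathcal{A}}\simeq\mathrm{Spec}(\K[u,v]/(uv))$. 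As a cross-check one may recover $\dim_\K\mathrm{H}^2(\varphi_0,\varphi_0)^{\mathrm{T}}=2$ at the central law by computing $\nu_n=\dim\mathrm{H}_2(\varphi_n)_{\alpha_{n+1}}$ through Proposition \ref{nn52} and following the chain of extensions with Theorem \ref{t6.1}; the one step with $\nu_n=2$ is where the quadric $uv=0$ enters.

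Next I would localize. For $\varphi_0\in\L_{13}^{\mathrm{T},\mathcal{A}}(\K)$ set $u_0:=(\varphi_0)_{\alpha_1,\alpha_2+\alpha_3}$ and $v_0:=(\varphi_0)_{\alpha_4,\alpha_1+\alpha_2+\alpha_3}$, so $u_0v_0=0$; then $\O_{\varphi_0}^{\mathrm{T},\mathcal{A}}$ is the localization of $\K[u,v]/\langle uv\rangle$ at the maximal ideal $\mathrm{P}=(u-u_0,v-v_0)$. If $u_0=v_0=0$, then $uv\in\mathrm{P}^2$, so $\mathrm{P}/\mathrm{P}^2$ has dimension $2$ while the ring has Krull dimension $1$ and is not a domain; its spectrum is the union of the lines $V(u)$ and $V(v)$ meeting at the origin, so $\varphi_0$ sits at the intersection of two components of the slice and is a singular point, the local ring being $(\K[u,v]/\langle uv\rangle)_{(\mathrm{P})}$ with $\mathrm{P}=(u,v)$. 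If instead $u_0\neq0$ (which forces $v_0=0$; the case $v_0\neq0$ is symmetric under the exchange of $u$ and $v$), then $u$ is a unit in the localization, so $uv=0$ forces $v=0$ there; hence $\O_{\varphi_0}^{\mathrm{T},\mathcal{A}}\cong\K[u]_{(u-u_0)}\cong\K[\tau]_{(\tau)}$ with $\tau:=u-u_0$, a discrete valuation ring, regular of Krull dimension one, so $\varphi_0$ is a regular point lying on the single component $V(v)$ (resp. $V(u)$).

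Finally I would read off the versality statements. By Corollary \ref{N21}(1) together with Remark \ref{r2.1}, the projection $\O_{\varphi_0}^{\mathrm{T}}\to\O_{\varphi_0}^{\mathrm{T},\mathcal{A}}$ is versal, and by Corollary \ref{N21}(2) (equivalently Proposition \ref{p2.2}) a versal deformation is the canonical deformation over $\widehat{\O_{\varphi_0}^{\mathrm{T},\mathcal{A}}}$; by the computation of the first paragraph its structure constants are affine-linear in the coordinates $u,v$, so it takes the form $\varphi_0+u\varphi_1+v\varphi_2$ subject to $uv=0$, with $|\mathcal{H}^2|=2$. In the first case $\varphi_1,\varphi_2$ are $2$-cocycles whose classes form a basis of $\mathrm{H}^2(\varphi_0,\varphi_0)^{\mathrm{T}}$ (which has dimension $\dim_\K\mathrm{P}/\mathrm{P}^2=2$, cf. Theorem \ref{n51}(5)), hence nonzero, and the obstruction map is $\Omega(u,v)=uv$ up to a nonzero class in $\mathrm{H}^3$. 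In the second case the base is the one-dimensional regular ring $\K[\tau]_{(\tau)}$, so $\dim_\K\mathrm{H}^2(\varphi_0,\varphi_0)^{\mathrm{T}}=1$; restricting the family to the branch through $\varphi_0$ gives the versal deformation $\varphi_0+u\varphi_1$ (resp. $\varphi_0+v\varphi_2$), with $\varphi_1$ (resp. $\varphi_2$) still of nonzero class.

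The main obstacle is the enumeration in the first paragraph: proving that, along the thirteen central extensions, no Jacobi relation survives other than $uv=0$ — equivalently that the obstruction arising at the unique step with $\nu_n=2$ is precisely the quadric $uv$, and that $u$ and $v$ genuinely yield linearly independent classes in $\mathrm{H}^2(\varphi_0,\varphi_0)^{\mathrm{T}}$. Once the slice has been identified with $\mathrm{Spec}(\K[u,v]/(uv))$, the rest is routine localization together with the standard dictionary between slices and versal deformations recalled in Section $2$.
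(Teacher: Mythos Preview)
Your proposal is correct and follows the same route as the paper: the text preceding the theorem carries out exactly the successive central extensions you describe, reducing all coordinates to affine-linear expressions in $u=X_{\alpha_1,\alpha_2+\alpha_3}$ and $v=X_{\alpha_4,\alpha_1+\alpha_2+\alpha_3}$ with the single surviving relation $uv=0$, after which the localization and versality statements are read off via the Section~2 dictionary just as you do. One small slip: there are nine central extensions (from dimension $4$ to $13$), not thirteen.
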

\begin{rem}We deduce from the reduction theorem that:
\begin{enumerate}
	\item [a)]$\phi_0+u\varphi_1+v\varphi_2$ is a versal deformation of $\phi_0=\mathrm{T}\ltimes\varphi_0$ in case 1 and $\phi_0+u\varphi_1$ or $\phi_0+v\varphi_2$ in case 2.
	\item [b)] the 2-cohomology group of $\phi_0$ is $\C^2$ in the case 1 and $\C$ in the case 2.
\end{enumerate}
\end{rem}
\subsection{Conclusion}
Gerstenhaber, Nijenhuis and Richardson did not study versal deformations in their work. However, they studied obstruction problems of one parameter deformations, and proved the link between rigidity of an algebra and its cohomology. 

To choose the local ring $\O$ as base is natural, and provides the canonical deformation $\mathrm{id}:\O\rightarrow\O$, which is a universal object in the category of deformations at $\phi_0$ satisfying the property (i) of Definition \ref{V}. However, this deformation is not minimal for this property, this is why we define a slice of $\L_m$ containing the point $\phi_0$ such that its canonical deformation in this subscheme is minimal, and called versal. The methods for computing versal deformations by fixing parameters and central extensions, and the reduction's theorem presented above simplify a lot of their calculations.

We can deduce all the deformation equivalence classes of $\varphi_0$ with base $\mathrm{A}\in\widehat{\got{R}}$ from its versal deformation associated with an admissible set $\mathcal{A}$, given by the local ring $\O_{\varphi_0}^{\mathrm{T},\mathcal{A}}$ of the slice at $\varphi_0$. Each deformation with base $\A$ is equivalent to $\widehat{\overline{h}}\circ\widehat{\pi}$, where the change base $\widehat{\overline{h}}:\widehat{\O_{\varphi_0}^{\mathrm{T},\mathcal{A}}}\rightarrow\A$ is defined by its image $\widehat{\overline{h}}(\widehat{\O_{\varphi_0}^{\mathrm{T},\mathcal{A}}})$ which corresponds to a quotient $\widehat{\O_{\varphi_0}^{\mathrm{T},\mathcal{A}}}/\mathrm{I}$ of $\widehat{\O_{\varphi_0}^{\mathrm{T},\mathcal{A}}}$, with $\mathrm{I}$ an ideal. To classify all deformation equivalence classes  of $\varphi_0$ with base $\A$ is equivalent to classify all ideals of $\mathrm{I}$ such that the local ring  $\widehat{\O_{\varphi_0}^{\mathrm{T},\mathcal{A}}}/\mathrm{I}$ is isomorphic to a local subring of $\A$.
In the case where $\A=\K[[\tau]]$ corresponds to the formal deformations in the Gerstenhaber's sense  of $\varphi_0$. Since $\K[[\tau]]$ is a domain, then the ideals $\mathrm{I}$ will be considered prime. 

In the first example, the point $\varphi_0$ is regular for $7\leq n\leq 11$, and the completion of the local  ring $\O_{\varphi_0}^{\mathrm{T},\mathcal{A}_n}=\K[t]_{(t)}$ is $\K[[t]]$. Then the versal deformation of $\varphi_0$ can be written as $\sum_nt^n\varphi_n$ on the completion ring, and gives one formal deformation of $\varphi_0$ (up an equivalence).\\
For $n\geq 12$, the Krull's dimension of the completion of the local ring  $\O_{\varphi_0}^{\mathrm{T},\mathcal{A}_n}\simeq\K[u]/\left\langle u^5\right\rangle$ is null, then $\varphi_0$ is formal rigid, cf. Theorem \ref{t4.1}, and any formal deformation of $\varphi_0$ is trivial.

In the second example, the point $\varphi_0$ at the intersection of two lines admits a versal deformation $\varphi_0+u\varphi_1+v\varphi_2$. The completion of the local ring $\O_{\varphi_0}^{\mathrm{T},\mathcal{A}}=\left(\K[u,v]/\left\langle uv\right\rangle\right)_{(\mathrm{P})}$ is $\K[[u,v]]/\left\langle uv\right\rangle$. Let $\overline{u}$ (resp. $\overline{v}$) be denote the class of $u$ (resp. $v$) in $\widehat{\O_{\varphi_0}^{\mathrm{T},\mathcal{A}}}$. It is clear that the ideals $\mathrm{I}_1=\left\langle\overline{ u}\right\rangle$ and $\mathrm{I}_2=\left\langle \overline{v}\right\rangle$ are prime. The Krull's dimension of $\widehat{\O_{\varphi_0}^{\mathrm{T},\mathcal{A}}}$ being one, we have only
three prime ideals $\mathrm{I}_1$, $\mathrm{I}_2$ and $\mathrm{P}$ where $\mathrm{P}$ is the maximal ideal consisting of $p\in\widehat{\O_{\varphi_0}^{\mathrm{T},\mathcal{A}}}$ such that $p(0,0)=0$. This gives three formal
deformations $\psi_i:=\varphi_0+\tau_i\varphi_i$ for $\widehat{\overline{h}_i}:\widehat{\O_{\varphi_0}^{\mathrm{T},\mathcal{A}}}\rightarrow\widehat{\O_{\varphi_0}^{\mathrm{T},\mathcal{A}}}/\mathrm{I}_i\stackrel{\simeq}{\rightarrow}\K[[\tau_i]]$ for $=1,2$ and $\varphi_0$ for $\widehat{\overline{h}}:\widehat{\O_{\varphi_0}^{\mathrm{T},\mathcal{A}}}\rightarrow\widehat{\O_{\varphi_0}^{\mathrm{T},\mathcal{A}}}/\mathrm{P}\stackrel{\simeq}{\rightarrow}\K$. The deformations $\psi_1$ and $\psi_2$ are not equivalent because $\varphi_1$ and $\varphi_2$ are linearly independent in $\mathrm{H}^2(\n,\n)^{\mathrm{T}}$. Consequently, there are two nontrivial formal deformations $\psi_1$ and $\psi_2$ of $\varphi_0$ (up an equivalence).

\end{document}